\newcommand{\Exp}[2][]{{\rm E}_{#1}[ #2 ]  }
\newcommand{\bl}[1]{{\mathbf #1}}
\newcommand{\bs}[1]{\boldsymbol #1}
\newtheorem{theorem}{Theorem}
\newtheorem{corollary}{Corollary}
\newtheorem{definition}{Definition}
\newtheorem{lemma}{Lemma}
\def\bbR{\mathbb{R}}
\def\bbU{\mathbb{U}}
\def\diag{\mathrm{diag}}
\def\x{{\bs x}}
\def\X{{\bs X}}
\def\psel{p_{\text{sel}}}
\def\pcon{p_{\text{con}}}
\def\Fcon{F_{\!\text{con}}}
\definecolor{Rcol1}{rgb}{0,0,0}
\definecolor{Rcol2}{rgb}{.87,.32,.42}
\definecolor{Rcol3}{rgb}{.38,.81,.31}
\definecolor{Rcol4}{rgb}{.13,.59,.90}
\definecolor{Rcol5}{rgb}{.16,.88,.89}
\definecolor{Rcol6}{rgb}{.80,.04,.73}
\definecolor{Rcol7}{rgb}{.96,.78,.06}
\newcommand{\colorline}[1]{\raisebox{2pt}{\tikz{\draw[-,#1,solid,line width = 0.75pt](0,0) -- (5mm,0);}}}
\begin{document}

\title{Selective and marginal selective inference for exceptional groups}
\author{Peter Hoff and Surya Tokdar}
\affil{Department of Statistical Science, Duke University} 
\date{\today}
\maketitle

\begin{abstract} 
Statistical analyses of multipopulation studies
often  use the data to select 
a particular population as the target of inference. 
For example, a confidence interval may be constructed for a population only in the 
event that its sample mean is larger than that of the other populations. 
We show that for the normal means model, confidence interval procedures that maintain strict coverage control conditional on such a selection event will 
have infinite expected width.
For applications where  such selective 
coverage control is of interest, this result motivates the development of 
procedures with finite expected width
 and  approximate selective coverage control
over a range of plausible parameter values. 
To this end, we develop selection-adjusted empirical Bayes confidence procedures
that use information from the data to approximate an oracle confidence procedure
that has exact selective coverage control and finite expected width. In 
numerical comparisons of the oracle and empirical Bayes procedures to procedures that only 
guarantee selective coverage control marginally over selection events, 
we find that improved selective coverage control comes at the cost of increased expected interval width. 

\smallskip
\noindent \textit{Keywords:} 
conditional test, empirical Bayes, hierarchical model, hypothesis test, shrinkage estimation. 

\end{abstract}

\section{Introduction} 
\label{sec:intro}
A common practice in multipopulation data analysis is to report estimates and inferences for one or more data-selected populations, treatments or groups. 
For example, 
an estimate and a confidence interval for the mean of a group 
might be reported only in the event that its sample mean is larger than that of the 
other groups. 
However, it is well known that standard estimates 
that do not adjust for the selection process 
will be biased for the mean of the selected group. More profoundly,
 the actual coverage rates of unadjusted confidence intervals 
 will not match their nominal rates \citep{dawid1994selection}. 

Referring to the selection bias as the 
``winner's curse,'' \citet{efron2011tweedie}
studied empirical Bayes techniques for bias correction. 
For the simple multiple normal means model,
\citet{andrews2024inference} 
and 
\citet{zrnic_fithian_2024} 
have recently considered the problem of confidence interval construction for the top group or ``winner'', that is, the 
group with the largest observed sample mean. 
 \citet{andrews2024inference} 
 developed and evaluated several procedures, one 
of which guarantees exact, constant 
frequentist coverage that holds conditionally on (and hence marginally over) the selection event. However, this procedure was noticed to have unreasonably large expected width. As an alternative, these authors developed a procedure with a smaller expected width, but at the cost of having approximate coverage that only holds marginally over a selection process. 

The distinction between marginal and conditional properties 
also arises in general non-selective multipopulation data analysis.
For example, shrinkage estimators of group means 
 obtained from a random-effects model 
are sometimes referred to as the best linear unbiased predictors 
(BLUPs).
While the average bias of such estimators across groups is generally close to zero, the bias 
for any particular group is not, and so as estimators of 
the individual values of the group means, the BLUPs are biased 
\citep[Section 4.8]{snijders2011multilevel}.
Similarly, 
while the coverage rates of so-called $1-\alpha$ 
prediction intervals for individual group means  may be approximately 
$1-\alpha$ on-average across groups, the coverage for any particular group 
will depend on the group's mean, and could be much lower than $1-\alpha$ if 
the mean is far from those of the  other groups
(\citet[Section 4.8]{snijders2011multilevel},\citet{yu2018adaptive}). 
Generally speaking, statistical properties that hold marginally - on-average over groups or parameter values - may not hold conditionally for specific groups, or for specific parameter values.

If coverage control is only required on-average 
across groups, selection events or parameter values, then intervals with only marginal control
are to be preferred to those 
with selective control, as the former are generally narrower than the latter. 
However, there are scenarios where selective coverage may be of interest. Consider a study designed to identify an 
underperforming school based on test scores of a sample of students from each school in a 
county. 
While the county superintendent may only be concerned with the marginal 
coverage rate of the interval on-average over which school is identified, 
the staff of a given school would likely be more concerned with the coverage rate 
in the specific case that 
their school is selected, that is, the selective coverage rate. 
A slightly different scenario is 
where a confidence interval for the effect of a new
or previously unremarkable treatment is constructed only in the event that it 
outperforms an established collection of treatments. 
In this case, the only treatment for which an interval will be constructed 
is the ``underdog'' treatment, 
and so there are no 
other selection events to average over and hence no relevant notion of marginal coverage over different selection 
events. 

In this article, we study the 
coverage and precision of confidence interval procedures 
for a normal population mean, 
conditional on this population yielding  a larger sample response than
those of 
several other normal populations.  
In the ``underdog'' scenario described above, confidence 
procedures may be constructed and evaluated using 
the following two nested types of probability, which
we define precisely in the next section:
\begin{description}
\item{Conditional:} conditional on the selection event and the data from unselected groups;
\item{Selective:} conditional on the selection event. 
\end{description} 
In the ``winners'' scenario where multiple selection events are possible, 
procedures may additionally be evaluated in terms of a third type of probability:
\begin{description}
\item{Marginal:} marginal over different selection events. 
\end{description} 
Coverage control at one level of the hierarchy implies control at higher levels: A procedure with $1-\alpha$ conditional coverage has $1-\alpha$ selective coverage, 
and  a procedure with $1-\alpha$ selective coverage has $1-\alpha$ marginal coverage. 
\citet{andrews2024inference} provided a confidence procedure 
with  constant conditional coverage which, in a simulation study, 
produced very wide intervals that
 the authors conjectured had infinite expected width. 
This result suggests looking for procedures with more coarse-grained 
coverage control, such as a procedure with constant selective coverage.
In Section 2 of this article, we show that, unfortunately, 
any procedure that maintains exact $1-\alpha$ selective coverage
must also have exact $1-\alpha$ conditional coverage, suggesting that 
any procedure that maintains constant selective coverage 
will have infinite expected width. For the normal means model, 
we prove that this is indeed the case.

Foreshadowing these negative results,  \citet{andrews2024inference} 
developed a second confidence procedure that has 
finite expected width and 
marginal, but not selective, coverage rate control. 
%{\sc ELABORATE: Recently, \citet{zrnic_fithian_2024} also developed 
%other procedures with marginal coverage rate control}. 
However, for applications where selective coverage is of interest, 
it may be preferable to use a procedure that has approximate
 selective coverage control over a range of plausible parameter values, if not 
over the entire parameter space. 
To this end, in Section 3 we introduce an 
oracle selective confidence interval that, given (unavailable) knowledge of 
the means of the non-selected groups, maintains exact $1-\alpha$ coverage 
and has finite expected width. We then illustrate in a simple two-group
case that, 
given accurate prior information about the non-selected group, 
a selection-adjusted Bayes interval may be constructed that mimics the 
performance of the oracle procedure.
%, \red{which is otherwise difficult to 
%reproduce without such accurate prior information}.   

Absent prior information, it seems possible that
in the case of multiple 
non-selected groups, knowledge of the means of the non-selected groups 
may be estimated from the data, then used to construct 
a selection-adjusted empirical Bayes procedure that 
approximates the oracle procedure. 
While our results from Section 2
rule out the possibility of 
global coverage control 
 without infinite expected interval width, 
in Section 4 we illustrate numerically that selection-adjusted empirical Bayes procedures 
can locally approximate the oracle procedure to some degree, by maintaining comparable 
expected widths and a useful 
degree of selective coverage over a range of parameter values. 
However, in comparison to procedures with only marginal coverage guarantees, we find that 
improved selective coverage control comes at the cost of increased interval width. 
A discussion follows in Section 5. 
Replication 
code for the numerical results in this article is available at the 
first author's 
website.

\section{Implications of conditional coverage control} 
\subsection{A hierarchy of coverage rates} 
\label{sec:coverate-types}
A simple but widely applicable model for studying and developing 
multipopulation inference procedures is the multiple normal means model, where 
scalar observations $Z_1,\ldots, Z_{p+1}$ are independently sampled from 
$p+1$ potentially different normal populations, so that 
$Z_j \sim N(\mu_j,\psi^2_j)$ independently for $j=1,\ldots,p+1$, 
with $\mu_1,\ldots, \mu_{p+1}$ being
unknown and $\psi^2_1,\ldots,\psi^2_{p+1}$ 
(approximately) known. This scenario might arise if the elements of 
 $\bs Z = (Z_1,\ldots, Z_{p+1})$ 
are sample averages from $p+1$ populations with means equal to the 
corresponding elements of 
$\bs\mu = ( \mu_1,\ldots,\mu_{p+1})$, and a common population variance $\psi^2$ 
which could be 
precisely estimated by pooling data across the groups. 
Letting $n_j$ be the sample size for population $j$, 
the variance of $Z_j$ would be $\psi^2_j = \psi^2/n_j$. 

We are interested in the 
selective coverage rates and expected widths of confidence interval procedures 
for the population mean 
of the group  having the largest observation.  
The selective properties we study arise from two slightly different 
inferential scenarios.  
In the first, which we refer to as ``inference on underdogs'',
$(Z_1,\ldots,Z_p)$ are the observations from an established set of $p$ groups and 
$Z_{p+1}$ is the observation 
from an unknown or previously unremarkable ``underdog'' group. 
Upon the remarkable 
event that $Z_{p+1}> \max\{ Z_1,\ldots, Z_p\}$, we construct a confidence 
interval for $\mu_{p+1}$. 
The second scenario 
 is that of making  ``inference on winners'' \citep{andrews2024inference}. 
In this case, $Z_1,\ldots, Z_{p+1}$ are independently sampled and 
a confidence interval is constructed for the population mean of the group 
having the largest observation. 

Let $S = \arg \max_j \{ Z_j: j=1,\ldots p+1\}$  be the index of the group with the
largest observation, so that 
$S=p+1$ in the ``underdog'' scenario and $S$ is a random 
variable in the ``winners'' scenario. 
In both scenarios the goal is to make inference on the mean $\mu_S$ of the selected group  based on the
data $\bs Z_{-S}$ from the unselected groups and  $Z_S$ from the selected group. 
For notational simplicity in what follows, we write $Y=Z_S$, $\theta=\mu_S$
 and $\sigma = \psi_S$
 as 
the outcome, mean and standard deviation of the selected group, and 
write $\bs X=(X_1,\ldots,X_p)= \bs Z_{-S}$, 
 $\bs \eta = 
 (\eta_1,\ldots, \eta_p)= \bs \mu_{-S}$ and  $\bs \tau = (\tau_1,\ldots, \tau_p ) = \bs\psi_{-S}$ as the outcomes, means and standard deviations of the unselected groups.
A confidence procedure $C$ is any (appropriately measurable) set-valued function $C: \mathbb R^{p+1} \rightarrow 2^\mathbb R$. 
The coverage rate of $C$ is defined as
the probability of the event 
$\mu_{S} \in C(\bs Z_{-S},Z_S)$, or equivalently, 
$\theta \in C(\bs X,Y)$, where the probability could be 
one of three types:
\begin{description}
\item{Marginal:}  $\Pr( \mu_S \in C(\bs Z_{-S},Z_S) | \bs\mu )$, the  marginal coverage rate; 
\item{Selective:}  $\Pr( \theta \in C(\bs X,Y) | S=s, \bs\eta,\theta )$, the selective coverage rate;
\item{Conditional:} $\Pr(\theta \in C(\bs x,Y ) | \bs X=\bs x, S=s, \bs\eta,\theta )$,
  the conditional coverage rate, 
\end{description}
where $s\in \{1,\ldots, p+1\}$ and $\bs x\in \mathbb R^p$. 
The marginal coverage rate is only relevant for the ``winners'' scenario, and 
is obtained by averaging 
over different selection events, and hence different correspondences between 
the elements of $\bs Z$ and $(\bs X,Y)$ and between $\bs\mu $ and $(\bs\eta,\theta)$. 
In the ``underdog'' scenario, or conditional upon any particular selection event 
$S=s$ in the ``winners'' scenario, 
 these correspondences are fixed, 
and so when considering selective coverage 
we drop the $s$ in the notation 
and write
selective and conditional coverage as $\Pr( \theta\in C(\bs X,Y) | \bs X\prec Y, \bs\eta,\theta)$ 
and $\Pr( \theta\in C(\bs x,Y) | \bs x\prec Y, \bs\eta,\theta)$  respectively, where 
$\bs x \prec y$ means that $y$ is larger than every element of $\bs x$. 
These coverage  probabilities are computed from the ``selective'' distribution of 
$(\bs X, Y)$ given $\bs X\prec Y$  and the conditional distribution 
of $Y$ given $\bs X=\bs x,\bs x \prec Y$ respectively, having 
densities $\psel$ and $\pcon$  given by
\begin{align}
\psel(\bs x,y | \bs\eta,\theta) 
 & =\frac{f(y| \theta,\sigma) \prod_{j} f(x_j|\eta_j,\tau_j)}
 {\int f(y|\theta,\sigma) \prod_{j} F(y|\mu_j,\tau_j)\, dy}\times 1_{[\bs x\prec y]}  
\label{eqn:dselective} 
\\ 
 \pcon( y | \bs x,\theta)
 &=\frac{  f(y| \theta,\sigma)}{ 1-F(x|\theta,\sigma) }\times 1_{[x<y]} 
\label{eqn:dconditional}  
\end{align}
where $x= \max\{ x_1,\ldots, x_p\}$, and  $f(\cdot|\mu,\psi)$ and $F(\cdot|\mu,\psi)$ are the density and cumulative distribution function (CDF) of the normal distribution  with mean $\mu$ and standard deviation $\psi$.

\subsection{Equivalence of constant conditional coverage procedures} 
\label{sec:equivalence}
For many applications it would seem desirable to use a confidence procedure $C$ with constant $1-\alpha$ selective coverage, so that 
$\Pr( \theta \in C(\bs X,Y) | \bs X\prec Y, \bs\eta,\theta ) =1-\alpha$ for 
all $(\bs\eta ,\theta) \in \mathbb R^{p+1}$.
In the ``winners'' scenario  where one group is selected from among several based on the sampled outcomes,
such a procedure seems ``fair'' in that is has the same coverage 
rate  regardless of which population mean among $(\mu_1,\ldots, \mu_{p+1})$ corresponds to $\theta$. 
In the ``underdog'' scenario  where the groups associated with $\bs X$ and $Y$ 
are fixed in advance and 
data are analyzed only upon
the occurrence of $\bs X\prec Y$,
such an interval covers  $\theta$ with probability $1-\alpha$, 
regardless of  the value of  $\theta$ or the values of the population means $\bs\eta$ of the 
other groups.

We now review the construction in
 \citet{andrews2024inference} of such a confidence 
procedure.
Any procedure with constant $1-\alpha$ selective coverage 
can be written as the inversion 
of the acceptance regions $\{ A (\theta_0) : \theta_0\in \mathbb R\}$ 
of a collection 
of level-$\alpha$ tests of $H: \theta = \theta_0$, for 
each $\theta_0\in \mathbb R$. 
For the selective coverage of $C$ to be $1-\alpha$ under the selection event  we must have 
\[
\Pr ( (\bs X,Y) \in A(\theta_0) | \bs X \prec Y,\bs\eta,\theta_0) = 
1-\alpha 
\]
for all $\bs\eta \in \mathbb R^{p}$ and $\theta_0\in \mathbb R$. 

Finding such a set $A(\theta_0)$ is challenging as it must have the 
same probability for all values of $\bs\eta$.
To circumvent this issue, 
 \citet{andrews2024inference} construct a 
confidence interval (or equivalently, a collection of level-$\alpha$ 
tests) using the conditional distribution of $Y$ given  
$x<Y $ where $x$ is the observed value of $X=\max\{ X_1,\ldots, X_p\}$. 
 \citet{andrews2024inference} study a quantile-based confidence 
interval for $\theta$ that is the inversion of 
acceptance regions $A$ of the form 
$A(\theta_0) = ( l(\theta_0),u(\theta_0))$ 
where $l(\theta_0)$ and $u(\theta_0)$ are the lower and upper $\alpha/2$ 
quantiles of the distribution with density $\pcon$ given by (\ref{eqn:dconditional}). 
The lower and upper endpoints $\theta_l<\theta_u$ of the resulting interval 
can be obtained as solutions to the  equations
$1-\Fcon(y | \theta_l)  =\alpha/2$ and 
$\Fcon(y | \theta_u)  = \alpha/2$, where $\Fcon$ is the CDF of the distribution with density $\pcon$. 
This interval has conditional coverage exactly 
equal to $1-\alpha$  for all values 
of $\theta$ and $x$. Since selective coverage 
is simply the expectation of conditional coverage over possible 
values of $X$, this interval also has selective coverage 
exactly equal to $1-\alpha$ for all values of $(\bs\eta,\theta)$. 

 \citet{andrews2024inference} noticed that numerically
this interval is very wide when 
$y$ is very close to $x$. Based on simulation studies, they 
speculated that the selective expected width of this interval 
is infinite (indeed it is, as we discuss in the next subsection). 
This undesirable performance leads one to wonder whether or not 
there exist narrower confidence interval procedures with constant 
$1-\alpha$ selective coverage. Intuitively, maintaining $1-\alpha$ 
conditional coverage for all possible values of $\bs X$ 
is a strong 
restriction, and so perhaps stronger than necessary to maintain 
$1-\alpha$ selective coverage for all $(\bs\eta,\theta)$ values. 
It turns out that this is not so - the next result shows that 
\emph{any} confidence interval procedure with constant 
$1-\alpha$ selective coverage for all $(\bs\eta,\theta)$   must also have constant 
$1-\alpha$ conditional coverage for all values $\bs x$ of $\bs X$. 

\begin{theorem} 
\label{thm:css}
\sloppy Let $C:\mathbb R^{p+1} \rightarrow 2^\mathbb R$ be a set-valued function such that 
$\Pr( \theta \in C(\bs X,Y) | \bs X \prec Y,\bs \eta, \bs\theta ) = 1-\alpha$ for  
all $(\bs\eta,\theta)\in \mathbb R^{p+1}$. 
Then $C$ satisfies 
\[ 
\Pr( \theta \in C(\bs x,Y) \mid \bs x\prec Y,\theta)  = 1-\alpha  
\]
for all $\theta\in \mathbb R$ and almost all 
    $ \bs x \in \mathbb R^{p}$. 
\end{theorem}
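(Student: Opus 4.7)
The plan is to recast the selective-coverage identity as an integral equation against the $p$-dimensional Gaussian density $\prod_{j}f(x_j\mid\eta_j,\tau_j)$, and then use the completeness of the normal location family to deduce that the integrand vanishes for a.a.\ $\bs x$. Specifically, I would start from the hypothesis and clear the denominator of $\psel$ by multiplying by $P_{\mathrm{sel}}(\bs\eta,\theta)=\int f(y\mid\theta,\sigma)\prod_{j}F(y\mid\eta_j,\tau_j)\,dy$ to obtain
\[
\int\!\!\int 1_{\theta\in C(\bs x,y)}\,1_{\bs x\prec y}\,f(y\mid\theta,\sigma)\prod_{j}f(x_j\mid\eta_j,\tau_j)\,d\bs x\,dy
= (1-\alpha)\int f(y\mid\theta,\sigma)\prod_{j}F(y\mid\eta_j,\tau_j)\,dy.
\]
Then I would rewrite the right-hand side as a $(p+1)$-fold integral using the identity $\prod_j F(y\mid\eta_j,\tau_j)=\int 1_{\bs x\prec y}\prod_j f(x_j\mid\eta_j,\tau_j)\,d\bs x$, bring everything to one side, and swap the order of integration (legal since integrands are bounded by a product of densities). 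The equation then reads, for every $\bs\eta\in\mathbb R^p$,
\[
\int G_\theta(\bs x)\,\prod_{j}f(x_j\mid\eta_j,\tau_j)\,d\bs x=0,\qquad
G_\theta(\bs x):=\int_{x}^{\infty}\!\bigl[1_{\theta\in C(\bs x,y)}-(1-\alpha)\bigr]f(y\mid\theta,\sigma)\,dy,
\]
where $x=\max_j x_j$.

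Next I would invoke completeness: the family $\{\prod_j f(\cdot\mid\eta_j,\tau_j):\bs\eta\in\mathbb R^p\}$ of multivariate normals with fixed diagonal covariance is complete in $\bs\eta$. Concretely, after absorbing the quadratic-in-$\bs x$ factor into $G_\theta$, the identity above is the bilateral Laplace transform of a bounded function vanishing on all of $\mathbb R^p$, so by uniqueness of the Laplace transform $G_\theta(\bs x)=0$ for Lebesgue-a.a.\ $\bs x\in\mathbb R^p$. Unpacking $G_\theta$ and dividing by $1-F(x\mid\theta,\sigma)>0$ gives
\[
\int 1_{\theta\in C(\bs x,y)}\,\pcon(y\mid\bs x,\theta)\,dy=1-\alpha
\]
for the same a.a.\ $\bs x$, which is the claimed conditional coverage. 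The argument goes through separately for each fixed $\theta$, so the exceptional null set may depend on $\theta$, consistent with the statement.

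The main obstacle, and the only nontrivial step, is the completeness/Laplace-inversion appeal. One has to check that $G_\theta$ is bounded (it is, by $|G_\theta|\le 1-F(x\mid\theta,\sigma)\le 1$) and measurable (which follows from the measurability assumption on $C$), so that $G_\theta(\bs x)\,e^{-\sum x_j^2/(2\tau_j^2)}$ is integrable against every exponential $e^{\langle\bs t,\bs x\rangle}$ for $\bs t$ in a nonempty open set --- indeed for all $\bs t\in\mathbb R^p$ after the usual substitution $t_j=\eta_j/\tau_j^2$. Once the integrability is in place, injectivity of the Laplace transform on tempered functions finishes the proof.
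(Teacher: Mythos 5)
Your proposal is correct and follows essentially the same route as the paper: both reduce the selective-coverage identity to the statement that a bounded function of $\bs x$ integrates to zero against the Gaussian density $\prod_j f(x_j\mid\eta_j,\tau_j)$ for every $\bs\eta\in\mathbb R^p$, and then invoke completeness of the normal location family (which you justify, correctly, via injectivity of the Laplace transform) to conclude the integrand vanishes for almost every $\bs x$. The paper phrases this by conditioning on $\bs X=\bs x$ and citing complete sufficiency directly, but the argument is the same.
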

\begin{proof}  
    The assumption on $C$ implies 
$\Pr( \theta \not\in C(\bs X,Y) |  \bs X\prec Y , \bs\eta,\theta ) = \alpha$, or 
equivalently, 
$ \Pr(\theta \not\in C(\bs X,Y), \bs X\prec Y \mid \bs \eta,\theta) - \alpha \Pr(\bs X\prec Y \mid \bs\eta,\theta) =0$. 
Let $G(\cdot | \bs\eta)$ be the probability measure of a Gaussian 
random vector with  mean $\bs\eta$ and variance $\diag(\tau^2_1,\ldots, \tau^2_p)$.  
Conditioning on $\bs X=\bs x$, the coverage condition becomes 
    \begin{align}
        0 & = \Pr(\theta \not\in C(\bs X,Y), \bs X\prec Y \mid \bs \eta,\theta) - \alpha \Pr(\bs X\prec Y \mid \bs\eta,\theta) \nonumber\\
        & = \int \left[\Pr(\theta \not\in C(\bs x,Y), \bs x \prec Y \mid \theta) - \alpha \Pr( \bs x \prec Y  \mid \theta) \right] \,  dG(\bs x  \mid \bs\eta) \label{eq:cond}
    \end{align}
    for every $(\bs\eta,\theta)$. Fix an arbitrary $\theta$. Since $\bs X$ is a complete sufficient statistic for the statistical model $\bs X \sim G(\cdot \mid \bs\eta)$, $ \bs\eta \in \bbR^{p}$, it follows that the integrand in \eqref{eq:cond} must be zero for almost every $\bs x$ under $G(\cdot|\bs\eta)$ for every $\bs\eta$. 
\end{proof}

This result implies that any confidence region with constant $1-\alpha$ 
selective 
coverage also has constant $1-\alpha$ conditional coverage, 
and can therefore be represented for each selection event
as the inversion of a collection of size-$\alpha$ tests of values of $\theta$ based on observation of 
$Y = y$ where $Y$ follows a truncated $N(\theta,\sigma^2)$ distribution, constrained to 
be above $x=\max \{ x_1,\ldots, x_p\}$. Note that the tests may also depend on the observed values $\bs x$  of all of the  elements of $\bs X$. 

\subsection{Expected widths and conditional coverage control}  
\label{sec:theory}

As shown above, any interval with constant selective coverage also has constant 
conditional coverage, which, based on the observation of 
\citet{andrews2024inference}, suggests undesirably high 
expected interval widths, where the expectation is ``selective'', that is, 
conditional on the selection event but on-average with respect to $\bs X$ and $Y$:
\begin{definition}
For any Lebesgue measurable $A \subset \bbR$, its width $|A|$ is 
 its Lebesgue measure $\int_A dx$. 
%Let $\lambda$ denote the Lebesgue measure on the real line. 
The selective expected width of a confidence procedure $C$ is  
the expected value of $|C|$ conditional on the  event $\bs X\prec Y$. 
\end{definition} 
First  we show that any procedure with conditional coverage control  will have infinite selective 
expected width. It follows that marginal expected width, obtained by averaging over different selection events, will 
be infinite as well. 
Our result applies to the case where the conditional coverage is constant at the nominal level, as well as 
 the case where the conditional coverage is not constant but never falls  below the nominal level.
\begin{theorem}
\label{thm:inf1}
Let $C$ be a confidence procedure with conditional coverage control, so that $\Pr(\theta \in C(\bs x, Y)| \bs x\prec Y ,\theta) \ge 1 - \alpha$ for all $\theta\in \mathbb R$ and almost surely in $\bs x$ for every $\bs \eta\in \mathbb R^p$. Then $C$ has an infinite selective expected width.
\end{theorem}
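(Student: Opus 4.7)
My plan is to condition on $\bs X = \bs x$ and show that for Lebesgue-almost every $\bs x$, the inner conditional expected width
\[
W(\bs x, \theta_0) := \Exp{|C(\bs x, Y)| \mid \bs x \prec Y, \theta_0}
\]
is already infinite; since the marginal selective law of $\bs X$ given $\bs X \prec Y$ is absolutely continuous with respect to Lebesgue measure on $\bbR^p$, the tower property then forces the selective expected width itself to be infinite. So fix an $\bs x$ at which the assumed conditional coverage holds, put $x = \max_j x_j$, let $P_\theta$ denote the law on $(x,\infty)$ with density $\pcon(\cdot \mid \bs x, \theta)$, and for each $\theta \in \bbR$ set $A_\theta = \{y > x : \theta \in C(\bs x, y)\}$. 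Fubini--Tonelli gives
\[
W(\bs x, \theta_0) = \int_\bbR P_{\theta_0}(A_\theta) \, d\theta,
\]
while the hypothesis reads $P_\theta(A_\theta) \ge 1-\alpha$ for every $\theta$. The task reduces to converting this $P_\theta$-lower bound on $A_\theta$ into a $P_{\theta_0}$-lower bound that decays slowly enough in $\theta$ to force the integral to diverge.

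The transfer is made by probing $\theta \to -\infty$, where $P_\theta$ collapses to the truncation boundary. Fix any $\delta > 0$. Mills-ratio asymptotics give $P_\theta((x+\delta, \infty)) \to 0$ as $\theta \to -\infty$, so $P_\theta(A_\theta \cap (x, x+\delta)) \ge (1-\alpha)/2$ for every $\theta$ below some threshold $\theta_\star$. On $(x,\infty)$ the density $\pcon(y \mid \bs x, \theta)$ is monotone decreasing in $y$ whenever $\theta < x$, and its value at $y = x$ equals $\phi((x-\theta)/\sigma)/[\sigma \{1 - \Phi((x-\theta)/\sigma)\}]$, which by Mills is at most $2(x-\theta)/\sigma^2$ for $\theta$ sufficiently negative. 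Combining the mass lower bound with this sup-density upper bound yields
\[
|A_\theta \cap (x, x+\delta)| \ge \frac{c_1}{x - \theta}
\]
uniformly for $\theta$ below some threshold, with $c_1 > 0$ depending only on $\alpha$ and $\sigma$.

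To finish, the density $\pcon(\cdot \mid \bs x, \theta_0)$ is continuous and strictly positive on the compact interval $[x, x+\delta]$, hence bounded below there by some $c_0 > 0$. Thus $P_{\theta_0}(A_\theta) \ge c_0 \, |A_\theta \cap (x, x+\delta)| \ge c_0 c_1/(x-\theta)$ for all sufficiently negative $\theta$, so
\[
W(\bs x, \theta_0) \ge \int_{-\infty}^{\theta_\star} \frac{c_0 c_1}{x - \theta} \, d\theta = +\infty.
\]
The main delicacy is the Mills-ratio step in the previous paragraph: one must verify with honest constants that the supremum of $\pcon(\cdot \mid \bs x, \theta)$ over $(x, x+\delta)$ grows at most linearly in $x - \theta$ while the tail mass past $x+\delta$ decays, both uniformly as $\theta \to -\infty$. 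Once that linear ceiling on the supremum density is in hand, the logarithmic divergence of $\int (x-\theta)^{-1}\, d\theta$ closes the argument.
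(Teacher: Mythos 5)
Your proof is correct, and it reaches the same $1/(x-\theta)$ decay rate and logarithmic divergence as the paper, but by a genuinely different mechanism. The paper also conditions on $\bs x$ and applies the Ghosh--Pratt identity, but then lower-bounds the acceptance probability $P_{\theta}(A(\theta_0,x)\mid x)$ by that of the Neyman--Pearson most powerful level-$\alpha$ test of $P_{\theta_0}$ versus $P_\theta$, computes that acceptance region explicitly as $(x, x+z(x-\theta_0)]$, and proves the quantile-gap estimate $z(x)\ge \tfrac{1-\alpha}{4x}$ to get the divergent integral; its conclusion for the inner conditional width is established only on the event $x\ge \theta+\Delta$, which suffices because $X$ has full support. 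You dispense with the most powerful test entirely: you squeeze the arbitrary acceptance set $A_\theta$ between a sup-density upper bound under the probing law $P_\theta$ (the truncated-normal density at the boundary grows like $(x-\theta)/\sigma^2$ by Mills' ratio, forcing $|A_\theta\cap(x,x+\delta)|\gtrsim 1/(x-\theta)$) and an inf-density lower bound under the true law $P_{\theta_0}$ on the fixed window $[x,x+\delta]$. This density-sandwich argument is more elementary, yields infinitude of the conditional expected width for \emph{every} $\bs x$ at which coverage holds rather than only for $x\ge\theta+\Delta$, and would extend to any location family whose truncated density at the boundary grows at most polynomially in $x-\theta$; what the paper's route buys is the sharpest possible per-$\theta_0$ bound (the minimal type II error over all level-$\alpha$ tests) and continuity with the testing framework used throughout the rest of the article. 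The one step you flag as delicate---uniformity of the Mills-ratio ceiling $\phi(t)/\{1-\Phi(t)\}\le 2t$ for $t\ge 1$ and the vanishing of $P_\theta((x+\delta,\infty))$ as $\theta\to-\infty$---does go through with the honest constants you indicate, so the argument is complete.
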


\begin{proof}
Define $X$ to be the largest element of $\bs X$.  Let $F^*(\cdot|\bs\eta,\theta)$ denote the conditional distribution of $X$ given $\bs X\prec Y$. 
The conditional distribution of $Y$ given $[\bs X \prec Y, \bs X = \bs x]$ is simply $N(\theta,\sigma^2)$ truncated to the interval $(x,\infty)$ with $x$ being the largest element of $\bs x$. 
 Denote the corresponding probability measure as $P_{\theta}(\cdot|x)$. Without loss of generality let $\sigma = 1$.

For any fixed $\bs x$ with maximum element $x$, we may view the map $y \mapsto C(\bs x,y)$ as the inversion of a collection of acceptance regions of level-$\alpha$ conditional tests of $H:\theta = \theta_0$ under the model $Y \sim P_{\theta}(\cdot|x)$, $\theta \in \bbR$. Specifically, let $A(\theta_0,x) = \{y > x: \theta_0 \in C(\bs x,  y)\}$ for each $\theta_0 \in \bbR$. The conditional coverage control assumption on $C$ implies
\[
P_{\theta}(A(\theta,x)|x) = \Pr(\theta \in C(\bs x,Y) \mid \bs x  \prec Y, \theta) \ge 1 - \alpha
\]
for all $\theta \in \bbR$. By the Ghosh-Pratt identity \citep{ghosh_1961,pratt_1961}, the conditional expected width of $C$ may be related to the average type II error rate of the corresponding tests:
\[
E[|C|(\bs X,Y) |  \bs X\prec Y, \bs X = \bs x, \bs \eta,\theta] = \int_{\bbR} P_{\theta}(A(\theta_0,x)|x)\, d\theta_0.
\]
We will show that the last integral is infinite whenever $x \ge \theta + \Delta$ for some fixed positive number $\Delta$. This would immediately give $E[|C|(\bs X,Y)|\bs X\prec Y,\bs\eta,\theta] = \infty$ because $X$ admits a positive density on all of $\bbR$ for every $(\bs\eta,\theta)$.

Fix $\theta$ and let $A^*(\theta_0,x)$ be the acceptance region of the most powerful level-$\alpha$ test of 
\[
H: Y \sim P_{\theta_0}(\cdot|x)~\text{vs}~K: Y \sim P_{\theta}(\cdot|x).
\]
Then $P_{\theta}(A(\theta_0,x)|x) \ge P_{\theta}(A^*(\theta_0,x)|x)$ since the most powerful test must have a smaller type II error rate than the level-$\alpha$ test of $H$ with acceptance region $A(\theta_0,x)$. Thus, it suffices to show $\int_{\bbR} P_{\theta}(A^*(\theta_0,x)|x)d\theta_0 = \infty$ whenever $x \ge \theta + \Delta$.

Let $x + z(x)$ be the $(1-\alpha)$ quantile of a standard normal distribution truncated to $(x,\infty)$, i.e., $\Phi(-x-z(x)) = \alpha \Phi(-x)$, where $\Phi$ denotes the standard normal distribution function. It is easy to see that if $x > \theta > \theta_0$, then $A^*(\theta_0,x) = (x,x + z(x-\theta_0)]$. 
Clearly $z(x) \downarrow 0$ as $x \to \infty$.
  However, $z(x)$ cannot vanish too rapidly. 
In fact, $z(x) \ge \frac{1-\alpha}{4x}$ for all $x > 1$, because 
\begin{align*} P_0([x, x+ \tfrac{1-\alpha}{4x}] \mid x) & 
 = \tfrac{ \Phi(x + (1-\alpha)/(4x)) - \Phi(x)}{1-\Phi(x)} \\
 & \leq \tfrac{1-\alpha}{4x} \times \tfrac{\phi(x)}{1 - \Phi(x)} \\
& \leq \tfrac{1 - \alpha}{4} \times (1 + \tfrac1{x^2}) \\ & 
  \le \tfrac{1-\alpha}{2}   < 1-\alpha = P_0([x, x+z(x)] \mid x),  
\end{align*}
where the first inequality results from the concavity of $\Phi$ on $x>0$,
the second from a Mill's ratio inequality (Appendix \ref{app:A}), and the final equality comes from the definition of $z(x)$. 
Fix $\Delta > 1$ such that $z(x) \le 1$ for all $x \ge \Delta$. For any $x \ge \theta + \Delta$, if $\theta_0 < \theta$ then $z(x-\theta_0)<1$ and 
\begin{align*}
P_0(A^*(\theta_0,x) |  x) & = \frac{\Phi( x+z(x-\theta_0)) - \Phi(x) } {1-\Phi(x)} \\
&  \ge \frac{\phi(x+1)}{1 - \Phi(x)} \times z(x-\theta_0) \ge  \frac{(1-\alpha)\phi(x+1)}{4(1 - \Phi(x))} \times \frac{1}{x-\theta_0}, 
\end{align*}
where the first inequality follows from the mean value theorem and because 
$\phi(x)$ is decreasing on $x>0$. 
We then have 
\begin{align*}
 \int_{-\infty}^{\theta} P_{\theta}(A^*(\theta_0,x) | x ) d\theta_0
 &  \geq     \frac{(1-\alpha)\phi(x+1)}{4(1 - \Phi(x))} \times \int_{-\infty}^{\theta} \frac{1}{x-\theta_0}\, d\theta_0 =\infty 
\end{align*} 
for every $x \ge \theta + \Delta$, thus completing the proof.
\end{proof}

Combining Theorems \ref{thm:css} and \ref{thm:inf1} we 
immediately conclude the following:
\begin{corollary}
Let $C$ be a confidence procedure with constant selective coverage,  $\Pr(\theta \in C(\bs X,Y)|\bs X\prec Y,\bs\eta,\theta) = 1 - \alpha$ for
every $(\bs\eta ,\theta)\in \bbR^{p+1}$. Then $C$ has infinite selective expected width
for every $(\bs\eta,\theta) \in \bbR^{p+1}$.
\label{cor:inf}
\end{corollary}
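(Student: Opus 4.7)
The plan is to obtain Corollary~\ref{cor:inf} as an immediate consequence of Theorems~\ref{thm:css} and~\ref{thm:inf1}, with only a small measure-theoretic compatibility check bridging the two. The approach is pure chaining: Theorem~\ref{thm:css} converts the assumed selective-coverage condition into a conditional-coverage statement, and Theorem~\ref{thm:inf1} then converts conditional coverage control into an infinite-expected-width conclusion.

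First I would invoke Theorem~\ref{thm:css} on the given $C$: its hypothesis is exactly the assumed constant $1-\alpha$ selective coverage for every $(\bs\eta,\theta)\in\bbR^{p+1}$, so its conclusion delivers $\Pr(\theta \in C(\bs x,Y)\mid \bs x\prec Y,\theta) = 1-\alpha$ for every $\theta\in\bbR$ and for Lebesgue-almost every $\bs x\in\bbR^p$. This exact equality trivially satisfies the weaker $\ge 1-\alpha$ bound required by Theorem~\ref{thm:inf1}.

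Next I would verify that the Lebesgue-almost-every qualifier from Theorem~\ref{thm:css} supplies exactly what Theorem~\ref{thm:inf1} asks for, namely coverage at least $1-\alpha$ almost surely in $\bs x$ under the sampling law of $\bs X$ for every $\bs\eta$. Since $\bs X \sim G(\cdot\mid\bs\eta)$ is a non-degenerate Gaussian for each $\bs\eta$, its law is equivalent to Lebesgue measure on $\bbR^p$, so the Lebesgue-null exceptional set produced by Theorem~\ref{thm:css} is also $G(\cdot\mid\bs\eta)$-null for every $\bs\eta$ simultaneously. Applying Theorem~\ref{thm:inf1} then yields infinite selective expected width at every $(\bs\eta,\theta)\in\bbR^{p+1}$, which is the claim.

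There is no substantive obstacle here; the only step that warrants a line of care, rather than being purely mechanical, is the measure-equivalence observation in the preceding paragraph, which is what lets the single ``almost every $\bs x$'' output of Theorem~\ref{thm:css} be consumed uniformly in $\bs\eta$ by the hypothesis of Theorem~\ref{thm:inf1}.
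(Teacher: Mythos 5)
Your proposal is correct and follows essentially the same route as the paper, which proves the corollary by directly combining Theorems~\ref{thm:css} and~\ref{thm:inf1}. The measure-equivalence remark you add (the Lebesgue-null exceptional set from Theorem~\ref{thm:css} being $G(\cdot\mid\bs\eta)$-null for every $\bs\eta$) is a sound and slightly more careful articulation of the bridge the paper treats as immediate.
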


This suggests that 
precise intervals that maintain $1-\alpha$ constant selective coverage 
are out of reach. However, the result in Theorem \ref{thm:css} 
relies on the fact that $\bs X$ is a complete sufficient statistic 
for the normal model, and so  
$\Exp{ f( \bs X)  | \bs\eta } = c$ for all $\bs\eta \in \mathbb R^{p}$ implies $f(\bs x) = c$ almost surely. 
But complete sufficiency does not  mean that 
$\Exp{ f( \bs X)  | \bs\eta } \geq c$  for all $\bs\eta$ 
implies $f(\bs x) \geq c$ for all $\bs x$. Therefore, Theorem \ref{thm:css} 
does not rule out the possibility that there exist 
procedures $C$ with selective coverage $\Pr( \theta \in C(\bs X,Y) | \bs X\prec Y,\bs \eta,\theta) \geq 1-\alpha$ for all $(\bs \eta,\theta)$, but where 
$\Pr( \theta \in C(\bs x,Y) | \bs x \prec Y,\theta) < 1-\alpha$ for 
a non-negligible set of 
 values of $\bs x$. 

Such a procedure, by not maintaining $1-\alpha$ coverage conditionally, 
could perhaps be narrower than the quantile-based procedure of 
\citet{andrews2024inference}, and provide a 
viable and precise selective confidence interval that maintains 
selective coverage at or above $1-\alpha$ for all  $(\bs\eta,\theta)\in \mathbb R^{p+1}$. 
Our next result suggests that such a procedure is not available, at least not 
among procedures that are location equivariant.

\begin{definition}
A confidence procedure $C(\bs x,y)$ is location equivariant if 
$C( \bs x + \bl 1 d,y+d ) = \{ \theta + d : \theta\in C(\bs x,y)\}$ 
for all $d\in \mathbb R$. 
\end{definition} 
Simply put, a confidence procedure is location equivariant if 
 $\theta\in C(\bs x,y)$ if and only if $\theta+d \in C(\bs x+ \bl 1 d ,y+d)$. 
For example, the 
procedures developed in 
\citet{andrews2024inference} described in the previous subsection 
are location equivariant. We are able to show that for the special case of two groups with equal group variances, any location equivariant procedure with selective coverage control has infinite expected width for some $ (\eta,\theta)\in \mathbb R^2$.

\begin{theorem}
\label{thm:inf2}
In the case of two groups $(p=1)$ and
 $\tau = \sigma$ let $C$ be a location equivariant confidence procedure such that $\Pr( \theta \in C(X,Y) | X<Y,\eta,\theta) \ge 1-\alpha$ for all $(\eta,\theta)\in \mathbb R^2$. Then the selective expected width is infinite for all values of $(\eta,\theta)\in \mathbb R^2$ on the diagonal line, i.e., for any $(\eta,\theta) = (c,c)$, $c \in \bbR$. 
\end{theorem}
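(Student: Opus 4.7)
The plan is to exploit location equivariance to reduce the problem to a single invariant statistic, to express the selective expected width on the diagonal as an integral of a selective-coverage-type quantity over a fictitious nuisance parameter via a Fubini-type identity, and then to use a Gaussian likelihood-ratio inequality together with Mills' ratio to show this integral diverges.

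First I would reduce to the origin: since $C$ is location equivariant, the joint shift $(X,Y)\mapsto(X-c,Y-c)$ preserves both $|C(X,Y)|$ and the event $\theta\in C(X,Y)$, so the selective expected width at $(c,c)$ equals that at $(0,0)$, and it suffices to prove $W:=E[|C(X,Y)|\mid X<Y,\eta=\theta=0]=\infty$. I would then pass to $M=(X+Y)/2$ and $D=Y-X$, which are independent Gaussians with $D\sim N(\delta,2\sigma^2)$ for $\delta:=\theta-\eta$, so the selection event becomes $\{D>0\}$. Location equivariance forces $C(X,Y)=(M-D/2)+T(D)$ for a set-valued function $T$ on $\bbR$, so $|C(X,Y)|=|T(D)|$ depends on the data only through $D$. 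Writing $B(d):=T(d)-d/2$, the coverage event $\theta\in C(X,Y)$ reduces to $\nu\in B(D)$, where $\nu=\theta-M\sim N(\delta/2,\sigma^2/2)$ is independent of $D$. Letting $\phi_{\mu,v}$ denote the $N(\mu,v)$ density and $q(d;\delta):=\Pr(N(\delta/2,\sigma^2/2)\in B(d))$, the selective coverage hypothesis becomes
\[
\int_0^\infty q(d;\delta)\,\phi_{\delta,2\sigma^2}(d)\,dd \;\ge\; (1-\alpha)\,\Phi\!\bigl(\delta/(\sigma\sqrt{2})\bigr),\qquad \delta\in\bbR.
\]

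The crux is the Fubini identity $|B(d)|=\tfrac12\int_{-\infty}^\infty q(d;\delta)\,d\delta$, which records that integrating the Gaussian probability of $B(d)$ over all possible means recovers Lebesgue measure. Substituting into $W=2\int_0^\infty|B(d)|\phi_{0,2\sigma^2}(d)\,dd$ (the factor of $2$ comes from $\Pr(D>0\mid\delta=0)=1/2$) and swapping integrals gives $W=\int_{-\infty}^\infty \tilde g(\delta)\,d\delta$ with $\tilde g(\delta):=\int_0^\infty q(d;\delta)\,\phi_{0,2\sigma^2}(d)\,dd$. To lower bound $\tilde g(\delta)$ for $\delta\le 0$, I would use the pointwise density ratio $\phi_{0,2\sigma^2}(d)/\phi_{\delta,2\sigma^2}(d)=\exp\bigl((\delta^2-2d\delta)/(4\sigma^2)\bigr)\ge \exp(\delta^2/(4\sigma^2))$ valid on $d\ge 0$. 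Combined with the coverage hypothesis and $q\ge 0$, this yields $\tilde g(\delta)\ge(1-\alpha)\,e^{\delta^2/(4\sigma^2)}\,\Phi(\delta/(\sigma\sqrt{2}))$ for every $\delta\le 0$. Mills' ratio (as used in the proof of Theorem~\ref{thm:inf1}) supplies $\Phi(\delta/(\sigma\sqrt{2}))\ge c\,e^{-\delta^2/(4\sigma^2)}/|\delta|$ for $|\delta|$ large, so the two exponentials cancel and $\tilde g(\delta)\gtrsim 1/|\delta|$ near $-\infty$, forcing $\int\tilde g(\delta)\,d\delta=\infty$ and hence $W=\infty$.

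The main obstacle is finding the Fubini identity and its consequent representation $W=\int\tilde g(\delta)\,d\delta$: it converts the abstract Lebesgue measure $|B(d)|$ into an integral of a selective-coverage-type quantity over a fictitious parameter, which is what makes the selective coverage hypothesis transferable to a lower bound on $W$. The Gaussian density comparison is only informative on the half-line $\delta\le 0$ (for $\delta\ge 0$ the bound becomes trivial), but this one-sided bound is enough once the Mills-ratio cancellation is applied. Location equivariance is essential throughout: without it the width is a function of the full two-dimensional data and there is no clean one-dimensional quantity to which the Fubini trick can be applied.
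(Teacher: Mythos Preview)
Your argument is correct and takes a genuinely different route from the paper's proof. The paper stays in the original $(x,y)$ coordinates: it sets $A_\theta=\{(x,y)\in\bbU:\theta\in C(x,y)\}$, uses equivariance to write $A_\theta=A_0+\theta\bl 1$, and obtains the Ghosh--Pratt identity $P_{0,0}(|C|)=\int_\bbR P_{\eta,\eta}(A_0)\,d\eta$. It then relies on auxiliary geometric lemmas showing that $P_{\eta,0}$ concentrates on circular sections $B_{\eta,\Delta}$; this concentration, combined with a density bound on $B_{\eta,\Delta}$, produces a Lebesgue-measure lower bound $|A_0\cap B_{\eta,\Delta}|\gtrsim \eta^{-1}$, which is then transferred to $P_{\eta/2,\eta/2}$. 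Your approach instead passes to the invariant/equivariant pair $(M,D)$, reduces the width to $|B(D)|$, and replaces the geometric concentration lemmas by the elementary pointwise density inequality $\phi_{0,2\sigma^2}(d)\ge e^{\delta^2/(4\sigma^2)}\phi_{\delta,2\sigma^2}(d)$ on $\{d\ge 0,\ \delta\le 0\}$. The Fubini identity $|B(d)|=\tfrac12\int q(d;\delta)\,d\delta$ plays the same structural role as the paper's Ghosh--Pratt step but is applied after the $(M,D)$ reduction rather than before. Both proofs ultimately reach the same $1/|\delta|$ (resp.\ $1/\eta$) divergence via the Mills' ratio cancellation. Your route is shorter and avoids the circular-section lemmas; the paper's more geometric framework, on the other hand, is formulated in a way that may be easier to push toward $p>1$ or $\tau\ne\sigma$, where the clean independent $(M,D)$ decomposition is no longer available.
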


A proof is presented in Appendix \ref{app:B}. 
Based on the proof of the theorem we have no reason to doubt that this 
result also holds for $p>1$ and 
in the heteroscedastic case where the the variances are not identical, 
but proving this appears to be quite tedious. We note here that 
location equivariant procedures can have non-constant selective 
coverage that changes with $(\bs\eta,\theta)$. Therefore, Theorem
\ref{thm:inf2} is indeed a distinct result relative to Corollary \ref{cor:inf}.

\relax

\section{Oracle and adaptive selective intervals}

\subsection{An oracle selective confidence interval} 

The results of the previous section suggest that location equivariant 
procedures that maintain a selective  coverage rate
 above some threshold have 
the undesirable property of infinite expected width. We also 
suspect that this holds more generally for any non-equivariant 
procedure that maintains a selective coverage rate. 
Therefore, it seems that the alternative to procedures with infinite 
expected width are procedures whose selective coverage 
$\Pr( \theta\in C(\bs X,Y) | \bs X\prec Y ,\bs\eta,\theta)$ 
could be arbitrarily 
small as a function of  $(\bs\eta,\theta)$.
However, this does not preclude 
the existence of finite  expected width procedures that maintain approximate selective 
coverage 
over a  relevant subset of $(\bs\eta,\theta)$-values.
Procedures with good performance over a wide range of parameter values
can often be constructed using Bayesian 
methods. 
In this section we develop a selection-adjusted 
Bayes procedure as an approximation to an ``oracle'' procedure that 
has exact error rate control. This Bayes procedure provides the foundation for the empirical Bayes procedures studied in the next section.

First consider trying to construct a procedure with 
selective coverage control in the case that $\bs \eta$ were known. 
In this case, the model for $Y$ 
conditional on $\bs\X\prec Y$ is a one-parameter exponential family model, 
and construction of  a confidence interval 
with exact $1-\alpha$  coverage for all $\theta\in \mathbb R$ and this 
specific  $\bs\eta$ is straightforward. Specifically, 
elaborating on  (\ref{eqn:dselective})
the joint density of $(\bs X,Y)$ conditional on $\bs X\prec Y$ 
is
\begin{equation}
\psel(\bs x, y \mid \bs\eta,\theta) =  \frac{\frac1\sigma\phi(\frac{y - \theta}{\sigma}) \prod_j \frac1{\tau_j}\phi(\frac{x_j - \eta_j}{\tau_j})}
{ c(\bs\eta ,\theta) }
\times 1_{[\bs x\prec y]}
\label{eqn:jcdensity}
\end{equation}
where the denominator is the probability of $\bs X\prec Y$ 
under no selection.  
We refer to the probability distribution having this density as 
$P_{\bs\eta,\theta}$, and the model 
for $(\bs X,Y)$ given $\bs X\prec Y$ as $\mathcal P = \{ P_{\bs\eta,\theta}: ( \bs\eta,\theta) \in \mathbb R^{p+1}\}$. 
We can rewrite the density (\ref{eqn:jcdensity}) as 
\begin{equation}
\psel(\bs x, y \mid \bs\eta,\theta)  =  
\left (   \frac{ \tfrac{1}{\sigma}  \phi(\tfrac{y - \theta}{\sigma})  \prod_{j} \Phi(\tfrac{y-\eta_j}{\tau_j})   }{ c(\bs\eta,\theta) }
 \right )   \times  \left (  1_{[\bs x\prec y]}   \times 
 \prod_{j=1}^p \frac{ \tfrac{1}{\tau_j} \phi(\tfrac{x_j - \eta_j}{\tau_j}) } {   \Phi(\tfrac{y-\eta_j}{\tau_j})   } \right )
\label{eqn:fjcdensity}
\end{equation}
so that the terms in parentheses  from left to right 
are the marginal density for $y$ and the conditional density 
for $\bs x$ given $y$, both conditional on the selection event 
$\bs X \prec Y$. In what follows, we denote these densities as 
$\psel(y|\theta,\bs\eta )$ and $\pcon(\bs x|y, \bs \eta  )$.

For fixed $\bs\eta$ 
the marginal model for $Y$ has densities 
$\{ \psel( y |\theta,\bs\eta ) : \theta\in \mathbb R\}$,
which constitute a one-parameter exponential family with complete sufficient 
statistic $Y$. 
If one ascribes to the likelihood principle 
then, from the perspective of an ``underdog'' with knowledge of $\bs\eta$,
this is the model from which inference for $\theta$ is to be derived, 
as $\pcon(\bs x| y,\bs\eta)$ in (\ref{eqn:fjcdensity}) does not depend 
on any unknown parameters. 
% appeal to the likelihood principle 
A $1-\alpha$ confidence interval for $\theta$ based on observation of $Y$ from this marginal model can be constructed 
from the inversion of a collection 
of level-$\alpha$ hypothesis tests. Specifically, for each $\theta_0\in \mathbb R$
let $A(\theta_0)$ be a subset of $\mathbb R$ such that 
$\Pr( Y \in A(\theta_0) | \bs X\prec Y, \theta_0) \geq 1-\alpha$
where the probability is under the density %(\ref{eqn:mcdensity}) at the value $\theta=\theta_0$. 
$\psel( y |\theta_0,\bs\eta )$.
Then $A(\theta_0)$ is the 
acceptance region of a level-$\alpha$ test of $H:\theta=\theta_0$. 
The confidence set based on $\{ A(\theta_0) : \theta_0 \in \mathbb R\}$ is the 
set-valued function 
$C(y ) = \{ \theta_0 : y\in  A(\theta_0) \}.$
Evidently, 
\begin{align*} 
\Pr( \theta_0 \in C(Y) |\bs X\prec Y, \theta_0 ) = \Pr( Y \in A(\theta_0)| \bs X\prec Y,\theta_0 ) \geq 1-\alpha, 
\end{align*} 
and so such a $C$ has $1-\alpha$ selective coverage for this fixed value of 
$\bs\eta$. 

The precision of the confidence interval $C$ is a function of the power of 
the tests $\{A(\theta_0):\theta_0\in \mathbb R\}$ used in its construction. 
While there is no uniformly most powerful test of $H:\theta=\theta_0$,
%for any $\theta_0$, 
there does exist a uniformly most powerful unbiased (UMPU) test
because 
for fixed $\bs\eta$ the densities 
$\{ \psel( y |\theta,\bs\eta ) : \theta\in \mathbb R\}$ 
constitute a one-parameter exponential family 
%since %(\ref{eqn:mcdensity}) 
\citep[Section 4.2]{lehmann_romano_2005}. 
We have observed numerically in many scenarios 
that the coverage rates and expected widths of the confidence 
interval derived from UMPU tests are nearly identical to those of the following 
simpler-to-construct 
equal-tailed quantile test:
Let $l(\theta_0,\bs\eta)$ and $u(\theta_0,\bs\eta)$ be 
the $\alpha/2$ and $1-\alpha/2$ quantiles of the distribution with density 
%(\ref{eqn:mcdensity}).    
$\psel(y|\theta,\bs\eta)$. 
Then for each $\theta_0\in \mathbb R$, 
$  A(\theta_0 ) = \left ( l(\theta_0,\bs\eta), u(\theta_0,\bs\eta)  \right )  $
is the acceptance region of a size-$\alpha$ test of $H:\theta=\theta_0$. 
Inverting such regions for each $\theta_0\in \mathbb R$ gives the confidence interval
\begin{equation} 
C(y) = \{ \theta : l(\theta,\bs\eta)< y < u(\theta,\bs\eta) \},  
\label{eqn:oregion}
\end{equation}
which has 
exact $1-\alpha$ selective coverage for all $\theta\in \mathbb R$. 
This confidence interval may be written as
$C(y) = ( \theta_l ,\theta_u )$, 
where the lower and upper endpoints $\theta_l<\theta_u$ 
are solutions to the  equations
$y=u(\theta_l,\bs \eta)$ and $y=l(\theta_u,\bs\eta)$. 
We refer to this interval
as an oracle confidence interval for $\theta$, as its construction is 
only possible using the values of the unknown $\bs\eta$. 
This procedure has $1-\alpha$ selective 
coverage for all $\theta\in \mathbb R$ by design. We believe its expected width 
to be finite for all $(\bs\eta,\theta)$, and are able to show this theoretically in the 
%following special case of one unselected group: 
following special case involving two groups:
\begin{theorem}
\label{thm:finite}
In the case of two groups $(p = 1)$ and $\tau = \sigma$  the confidence interval (\ref{eqn:oregion})
has finite selective expected  width  for all $(\eta,\theta) \in \bbR^2$.
\end{theorem}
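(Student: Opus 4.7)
The plan is to invoke the Ghosh-Pratt identity \citep{ghosh_1961,pratt_1961}, exactly as in the proof of Theorem \ref{thm:inf1}, to rewrite the selective expected width as
\begin{equation*}
E[|C|(Y)\mid X<Y,\eta,\theta] = \int_{\bbR} P_{\eta,\theta}(Y \in A(\theta_0) \mid X<Y)\, d\theta_0,
\end{equation*}
where $A(\theta_0) = (l(\theta_0,\eta), u(\theta_0,\eta))$ is the equal-tailed acceptance region at $\theta_0$ and the probability on the right is computed under the true selective density $\psel(y\mid\theta,\eta)$. Finiteness of the expected width then reduces to showing the integrand decays integrably as $|\theta_0| \to \infty$.

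Without loss of generality take $\sigma = \tau = 1$, so that $\psel(y\mid\theta_0,\eta) \propto \phi(y-\theta_0)\Phi(y-\eta)$. The main analytic task is to locate the quantiles $l(\theta_0,\eta)$ and $u(\theta_0,\eta)$ for $|\theta_0|$ large. As $\theta_0 \to +\infty$, the factor $\Phi(y-\eta)$ tends to $1$ uniformly over the effective support of the Gaussian factor centered at $\theta_0$, so the selective distribution converges after centering to $N(0,1)$, giving $l(\theta_0,\eta), u(\theta_0,\eta) = \theta_0 + O(1)$ with the interquantile width $u-l$ bounded. As $\theta_0 \to -\infty$, the Mills-ratio asymptotic $\Phi(t) \sim \phi(t)/|t|$ as $t \to -\infty$ applied to $y$ in the relevant range produces, after completing the square, a log-density of the form $-(y - (\theta_0+\eta)/2)^2$ plus a slowly varying correction, yielding quadratic concentration with mode at $(\theta_0+\eta)/2$ and scale $O(1)$; hence $l(\theta_0,\eta), u(\theta_0,\eta) = (\theta_0+\eta)/2 + O(1)$, with $u - l$ again bounded.

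Given these quantile asymptotics, bound the integrand using $\Phi(y-\eta) \le 1$ to get $\psel(y\mid\theta,\eta) \le \phi(y-\theta)/c(\eta,\theta)$, so
\begin{equation*}
P_{\eta,\theta}(Y \in A(\theta_0)\mid X<Y) \le \frac{u(\theta_0,\eta) - l(\theta_0,\eta)}{c(\eta,\theta)}\, \sup_{y \in A(\theta_0)} \phi(y - \theta).
\end{equation*}
The first factor is $O(1)$. The supremum on the right decays like $\phi(\theta_0 - \theta)$ as $\theta_0 \to +\infty$ and like $\phi((\theta_0+\eta)/2 - \theta)$ as $\theta_0 \to -\infty$, both producing Gaussian-integrable tails in $\theta_0$. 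A continuity and compactness argument dispatches bounded $\theta_0$, completing the proof.

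The principal obstacle will be making the $\theta_0 \to -\infty$ quantile analysis fully rigorous, that is, quantifying the correction to the leading location $(\theta_0+\eta)/2$ and bounding the interquantile range uniformly rather than just identifying the leading term. A change of variables $y = (\theta_0+\eta)/2 + w$ together with uniform Laplace-type control of the slowly varying Mills-ratio factor $1/(\eta - y)$ should suffice, but the bookkeeping is delicate because this correction interacts with the Gaussian concentration.
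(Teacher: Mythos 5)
Your proposal is correct in outline and reaches the same two structural facts that the paper's proof rests on, but by a genuinely different route. You analyze the one-dimensional selective density $\psel(y\mid\theta_0,\eta)\propto\phi(y-\theta_0)\Phi(y-\eta)$ directly, using a Laplace/Mills-ratio expansion to show that the acceptance interval has bounded width and is centered near $\theta_0$ as $\theta_0\to+\infty$ and near $(\theta_0+\eta)/2$ as $\theta_0\to-\infty$; Gaussian decay of the acceptance probability under the true $\theta$ then follows from the crude bound $\psel(y\mid\theta,\eta)\le\phi(y-\theta)/c$, and Ghosh--Pratt finishes the job. The paper instead works in the two-dimensional $(x,y)$ plane: it shows (Lemma \ref{lem:bstar}) that the circle sections $B^*_{\theta_0,r}$ of \eqref{eq:bstar} carry probability at least $F_{\chi^2_3}(r^2)\ge 1-\alpha/2$ under $P_{0,\theta_0}$ \emph{uniformly} in $\theta_0$, which traps the quantile interval $I_{0,\theta_0}$ inside an explicit strip $[l(\theta_0),u(\theta_0)]$ of width at most $3r/\sqrt2$ containing $\theta_0$ or $\theta_0/2$ (matching your asymptotic locations with $\eta=0$); a second application of the same lemma then bounds the acceptance probability by $1-F_{\chi^2_3}(\delta^2)$ with $\delta$ growing linearly in $|\theta_0|$. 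The trade-off is exactly the one you flag at the end: your approach is more elementary and makes the mechanism transparent (the selective law is approximately Gaussian at location $\theta_0$ or $(\theta_0+\eta)/2$), but the negative-$\theta_0$ quantile control must be made uniform rather than merely asymptotic, and the Mills-ratio correction $1/(\eta-y)$ interacting with the Gaussian tails requires real bookkeeping. The paper's geometric lemmas deliver precisely that uniform, non-asymptotic control in closed form, and are reused for Theorem \ref{thm:inf2}, which is why the authors route both proofs through them. Your argument would go through once the uniformity is nailed down; nothing in it is wrong.
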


See Appendix \ref{app:B} for a proof. 

\subsection{Adaptive quantile estimates} 
Since $\bs\eta$ is unavailable, so are the quantile functions $l$ and $u$, and so for each hypothesized value of $\theta_0$ we estimate these 
quantities using the observed data $\bs X$ and $Y$. 
The resulting acceptance regions are of 
the form $A(\theta_0) = \{ (\x,y) : \hat l(\theta_0,\x,y) < y < \hat u(\theta_0,\x,y) \}$. 
For the resulting confidence regions to have approximate $1-\alpha$ coverage, we need that  $\hat l$ and $\hat u$  
satisfy
$\Pr( (\X,Y) \in A(\theta_0) | \bs\eta,\theta_0)\approx 1-\alpha$ for all $\bs\eta$ and $\theta_0$, 
or more or less equivalently, 
\begin{align} 
\Pr( Y< \hat l(\theta_0,\X,Y) | \bs \eta, \theta_0 ) \approx \alpha/2   \label{eqn:lc} \\ 
\Pr( Y> \hat u(\theta_0,\X,Y) | \bs \eta,\theta_0 ) \approx \alpha/2 \label{eqn:uc} 
\end{align}  
for all $(\bs\eta,\theta_0)\in \mathbb R^{p+1}$. 
On the other hand, for the confidence regions to be precise, we want these approximate tests to be powerful, that is, 
we want $\Pr( (\X,Y) \not\in A(\theta_0) | \bs \eta,\theta)$ to be large if $\theta\neq \theta_0$. 
Essentially, we want $\hat l$ and $\hat u$ to satisfy 
\begin{align} 
\Pr( Y< \hat l(\theta_0,\X,Y) | \bs\eta,\theta ) >  \alpha/2  \text{ for $\theta<\theta_0$ }  \label{eqn:lp} \\
\Pr( Y> \hat u(\theta_0,\X,Y) | \bs\eta,\theta )  > \alpha/2 \text{ for $\theta> \theta_0$. }  \label{eqn:up}
\end{align}  

We consider three strategies for obtaining 
estimates of $\hat l$ and $\hat u$, each constructed from 
plug-in estimates $\hat {\bs\eta}$ of $\bs\eta$. The estimates of $l$ and $u$  will then be
of the form 
\begin{align} 
\hat l(\theta_0,\X,Y)& =l(\theta_0,\hat{\bs\eta}(\theta_0,\X,Y) ) \label{eqn:lhat} \\
\hat u(\theta_0,\X,Y) & = u(\theta_0,\hat{\bs\eta}(\theta_0,\X,Y)),\label{eqn:uhat}
\end{align}
so that $\hat{\bs\eta}$ may depend on the data $(\X,Y)$ as well 
as the particular value of $\theta_0$ being tested. 

To achieve the approximate coverage in  (\ref{eqn:lc}) and (\ref{eqn:uc}), 
we need $\hat l(\theta_0,\X,Y) \approx l(\theta_0,\bs\eta)$ 
(and similarly $\hat u \approx u$) 
under $(\X,Y)\sim P_{\bs\eta,\theta_0}$ for a range of $\bs\eta$-values for each fixed $\theta_0$ value. 
From (\ref{eqn:lhat}) this means that we 
need $\hat{\bs\eta}(\theta_0,\X,Y)$ to be a good estimate of $\bs\eta$ in 
the submodel $\mathcal P_{\theta_0} = \{ P_{\bs\eta,\theta_0} : \bs\eta\in \mathbb R^m \}$. 
In other words, the probability of coverage of the value $\theta_0$ 
depends on the accuracy of  
$\hat{\bs\eta}(\theta_0,\X,Y)$
 under  $(\X,Y) \sim P_{\bs\eta,\theta_0}$ 
 and
 not on its accuracy 
under $(\X,Y) \sim P_{\bs\eta,\theta}$ for $\theta\neq \theta_0$. 
This fact suggests that,  to maintain a selective coverage rate, 
we might use  
an estimate of $\bs\eta$ based on the marginal distribution 
of $\X$ under the submodel $\mathcal P_{\theta_0}$. 
One such estimator is the  maximum likelihood estimator (MLE) of 
$\bs\eta$ under this submodel. We refer to this estimator as the profile MLE $\hat{\bs\eta}_P$, which is defined as 
\begin{align*}
 \hat{\bs\eta}_P(\theta_0 ,\x ) = 
 \arg \max_{\bs\eta} \psel(\x | \bs\eta,\theta_0 )   = 
   \arg\max_{\bs\eta}   \frac{  \Phi \left ( \tfrac{ x - \theta_0}{\sigma } \right ) \left \{ \prod_j \phi(\frac{x_j-\eta_j}{\tau_j})  \right \} 
   } {c(\bs\eta,\theta_0)},
\end{align*}
where $x = \max \{ x_1,\ldots, x_p\}$.

However, while use of $\hat{\bs\eta}_P$ to construct approximate quantile functions 
may result in approximate error rate control (and thus approximately 
correct coverage), its performance in terms of power (and thus 
interval width) may be poor. The reason is that the 
accuracy %precision 
of $\hat{\bs\eta}_P$ when $\theta=\theta_0$ comes at the expense of 
inaccuracy %imprecision 
when $\theta\neq \theta_0$, which could result in 
poor power. For example, suppose $(\X,Y) \sim P_{\bs\eta,\theta}$ 
for some $\theta$ that is much lower than $\theta_0$. Then we 
hope that our confidence interval is unlikely to contain $\theta_0$, that is 
 (referring to Equation 
\ref{eqn:lp}) 
$Y < \hat l(\theta_0,\X,Y)$  with high probability. 
Unfortunately, using $\hat{\bs\eta}_P$ in $\hat l$, 
so that $\hat l(\theta_0 ,\X,Y) = l(\theta_0 , \hat{\bs\eta}_P(\theta_0,\X))$, 
is likely to lead to $l(\theta_0,\bs\eta)$ being underestimated, which 
will lead to a lower probability of rejecting $\theta_0$ and 
hence a wider confidence interval. 
\iffalse Arguing heuristically, if 
$(\X,Y) \sim P_{\bs\eta,\theta}$ with $\theta<\theta_0$ then 
the values of $(\X,Y)$ will be lower than would be 
expected under $P_{\bs\eta,\theta_0}$. Since $\theta_0$ is fixed 
in estimation of $\hat{\bs\eta}_P$, the low values result in a downwardly biased 
estimate of $\bs\eta$, with  $\hat{\bs\eta}_P$ likely being less than $\bs\eta$. Finally, 
since $l(\theta_0,\bs\eta)$ is decreasing in $\bs\eta$ for fixed $\theta_0$, 
 $\hat l(\theta_0,\hat{\bs\eta}_P)$ is likely 
less than $l(\theta_0,\bs\eta)$, which reduces the probability that 
$Y$  rejects $\theta_0$ via (\ref{eqn:lp}). 
\fi

Such concerns suggest that to achieve good power and hence a narrow confidence interval we need an 
estimate $\hat{\bs\eta}(\theta_0,\X,Y)$ that is accurate 
at $\theta_0$ (to ensure coverage at $\theta_0$ if it is true) 
\emph{and} at other $\theta$ values (to ensure rejection of $\theta_0$ if it is 
false). Simply put, we seek an estimate $\hat{\bs\eta}$ that is 
accurate for a wide range  of $(\bs\eta,\theta)$-values. One possibility is to 
estimate $\bs\eta$ from the conditional model for $\bs X$ given $\{\bs X \prec Y,~Y=y\}$, 
which has densities $\{ \pcon(\bs x | y, \bs\eta) , \bs\eta\in \mathbb R^p\}$ 
given in (\ref{eqn:fjcdensity}) that depend 
 only on $\bs\eta$ and $y$ and not $\theta$. 
Letting $\hat{\bs\eta}_C$ be this conditional MLE, we consider estimating the quantile functions as
$\hat l(\theta_0,\X,Y) = l(\theta_0,\hat{\bs\eta}_C)$ and 
$\hat u(\theta_0,\X,Y) = u(\theta_0,\hat{\bs\eta}_C)$. 

While we expect $\hat{\bs\eta}_C$ to be less biased than $\hat{\bs\eta}_P$ away from 
the true $\theta$-value (and thus potentially lead to  greater power), 
the conditional MLE of $\bs\eta$ based on only a single 
vector $\bs X$ could be quite variable unless $\bs\tau$ and $\sigma$ are 
small. This suggests the  use of an estimator with lower variance, 
such as a Bayes estimator: If accurate prior 
information about $\bs\eta$ is available, then combining this with  
the conditional likelihood based on $ \pcon(\bs x | y, \bs\eta)$ 
should produce a Bayes estimator $\hat{\bs\eta}_B$ 
with similar bias as $\hat{\bs\eta}_C$ but lower variance. 
Thus, in addition to $\hat{\bs \eta}_P$ and $\hat{\bs \eta}_C$, 
 we also consider a posterior mode estimator  defined as 
\[ \hat{\bs\eta}_B =  \arg \max_{\bs\eta}  \pcon(\bs x|y,\bs\eta)\times \pi(\bs\eta) \]
where 
$\pcon(\bs x|\bs\eta,y)$ is the conditional density of $\bs X$ given $\bs X\prec Y,Y=y$ and 
$\pi(\bs\eta)$ is a probability density describing the prior information. 

\subsection{Numerical illustration}
The properties of the adaptive tests discussed in the preceding 
subsection are illustrated numerically for a simple scenario 
in Figure \ref{fig:power}. 
The figure considers the simple two-group case where 
$p=1$, $\eta=\theta_0=0$, and $\tau=\sigma=1$. 
The left-side panel  shows the power function of 
the level-$\alpha$ ($\alpha=.05$)
oracle test given by (\ref{eqn:oregion}) as well as the 
power functions of the 
various  adaptive tests with acceptance regions of the form 
\[ A(\theta_0) =\{ (x,y) : l(\theta_0,\hat \eta(\theta_0,x,y)) < y < 
u(\theta_0,\hat \eta(\theta_0,x,y))  \}, \]
where 
$\hat \eta$ is one of the three estimators described above and 
$l(\theta,\eta)$ and $u(\theta,\eta)$ are the 
.025 and .975 quantiles of the $Y$-margin of $P_{\eta,\theta}$. 

As it was designed to do, the test based on $\hat\eta_P$ (in green) provides
 level-$\alpha$ error rate control for this scenario, but has considerably less power than the oracle procedure (in blue). As discussed above, 
this is partly explained by the 
the bias of $\hat\eta_P$ when $\theta\neq \theta_0$, which can be seen on the right-side panel 
of the figure. 
In contrast, the test based on $\hat\eta_C$ has high power but also a high 
size at $\theta_0$, indicating that the corresponding interval 
will not maintain $1-\alpha$ coverage. 
This is partly explained by the high variance and positive skew of the distribution 
of $\hat\eta_C$ for values of $\theta$ less than $\eta$. 
Finally, from the right-side panel we see that a Bayes estimator $\hat\eta_B$ (using the prior $\eta\sim N(0,1)$)
has low bias and low variance compared to the other two estimators, and thus provides a good approximation to power function of the oracle procedure, as 
shown in the left-side panel. 
\begin{figure}[!t]
\centerline{\includegraphics[scale=.75]{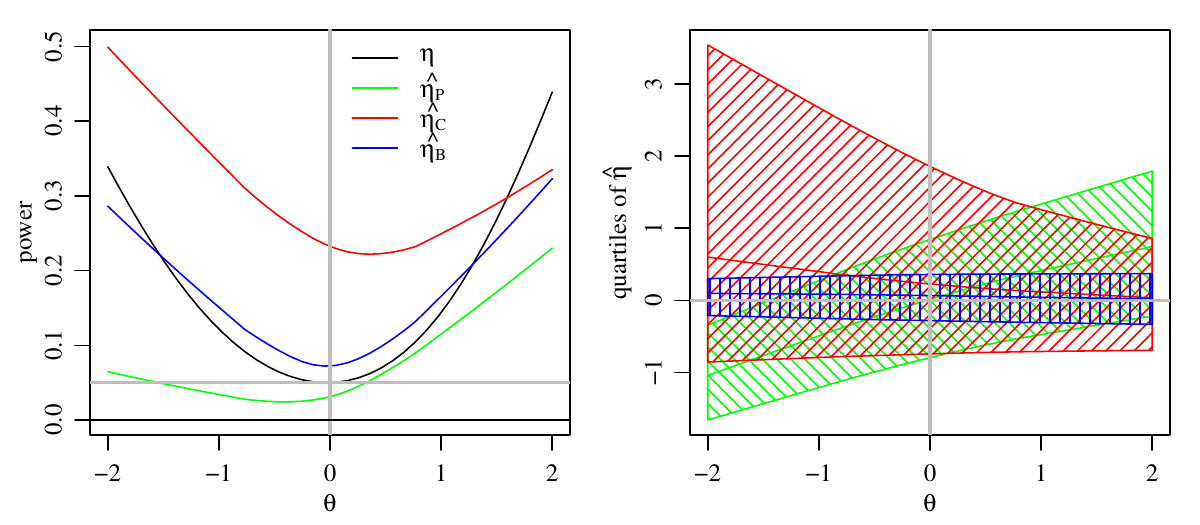}} 
\caption{Numerical comparison of different adaptive tests and estimates: 
  The left-side panel gives power functions of the nominal 
level-.05 tests of $\theta=0$. The right side panel gives the 
  25th, 50th and 75th percentiles of the distributions 
of $\hat \eta_P$ (green), $\hat \eta_F$ (red) and $\hat \eta_B$ (blue) 
under $(X,Y)\sim P_{0,0}$. }
\label{fig:power} 
\end{figure}

\section{Adaptation via empirical Bayes}

The numerical results in the previous section suggest that, in general, it is challenging to construct a practicable confidence procedure which mimics the oracle recipe and thus achieves similar selective coverage with short intervals. The only exception appears to be the case when reasonably accurate information about $\bs\eta$ is available {\it a priori}, so that a Bayes estimate of $\bs\eta$ is favorably shrunk toward the true value, allowing for good estimation of the quantiles of $\psel(y|\theta_0,\bs\eta)$ for all candidate values of $\theta_0$. But, one may ask, how is having reasonably accurate prior information on $\bs\eta$ any different from knowing $\bs\eta$ exactly? Indeed, the two situations are quite similar when $p$ is small, but a stark difference arises when $p$ is moderately large. In the latter case, estimation of $\bs\eta$ can be shrunk adaptively by gleaning key information about $\bs \eta$ from the data $\bs X$ in an empirical Bayesian manner.

Empirical Bayesian (EB) shrinkage is typically well suited for estimating multiple quantities simultaneously when the estimation error is measured by a composite loss. This appears to be the case with estimation of $\bs\eta$ when the only  goal is to accurately estimate the density $\psel(y|\theta_0,\bs\eta)$ and its quantiles. Consider a plug-in estimator $\widehat\psel(y|\theta,\bs\eta)$ with $\bs\eta$ replaced by $\hat{\bs\eta}$.
From (\ref{eqn:fjcdensity}), 
 we may write $\widehat\psel(y|\theta) \propto \psel(y|\theta,\hat{\bs\eta})r(y)$ where
\[
r(y) = \prod_{j=1}^p \frac{F(y|\eta_j,\tau_j)}{F(y|\hat\eta_j,\tau_j)},~y \in \bbR.
\]
While $r(y) \approx 1$ for all large $y$, for $\widehat \psel$ to be a good estimate of $\psel$ one must have $r(y) \approx 1$, i.e., $R(y) := \log r(y) \approx 0$ for all values of $y$. 
To see when this might be the case, 
assume $\tau_1 = \cdots = \tau_p = \tau$ and suppose the empirical distribution of $\{\eta_1,\ldots,\eta_p\}$ is well approximated by a $N(m, v)$  distribution for some $m$ and $v > 0$. For any $y \ll m$, we should have all $\eta_j > y$, and hence 
\begin{equation}
\label{approx}
R(y)  \approx \frac1{\tau}\sum_{j =1}^p (\hat\eta_j - \eta_j)\frac{f(y|\eta_j,\tau)}{F(y|\eta_j,\tau)} \approx \frac1\tau \sum_{j = 1}^p (\hat\eta_j - \eta_j)(\eta_j - y),
\end{equation}
where the first approximation is due to Taylor's theorem and the second follows from Mill's ratio bounds for normal distributions (see Appendix \ref{app:A}). 
%Since it is desirable that $R(y)^2$ be small, % on average, it is clear that we are mostly concerned about minimizing a composite, albeit complicated looking loss function in $\bs\eta$. 
From this we see that for $R(y)$ to be small so that $\widehat \psel \approx \psel$, 
we need $\hat \eta_j \approx \eta_j$ on average across $j=1,\ldots,p$, that is, 
we are mostly concerned about minimizing a composite, albeit complicated looking loss function in $\bs\eta$.

To further elucidate on the scope of shrinkage in the present context, consider the selection-unadjusted estimate $\hat\eta_j = X_j$.  On the one hand, we get
\[
\Exp{R(y)^2} %= \sum_{j > 1} \Exp{(\theta_j - y_1)^2} 
\approx p\{v + (m - y)^2\} = p(m - y)^2\{1 + o(1)\}
\]
for $y \ll m$. On the other hand, with the Bayes estimate $\hat\eta_j = \rho X_j + (1 - \rho)m$ with shrinkage factor $\rho = v/(\tau^2 + v)$, we get
\begin{align*}
\Exp{R(y)^2} & \approx \rho^2 p\{v + (m - y)^2\} + (1 - \rho)^2\{p(p+2)v^2 + pv(m - y)^2 \}\\
& = \rho p(m - y)^2\{1 + o(1)\}.
\end{align*}
Therefore, the Bayes estimate could offer substantial improvement when $\rho$ is small. In practice, we do not know $m$ and $v$, but these %``population'' 
quantities that describe the heterogeneity of $\eta_1,\ldots,\eta_p$ could be estimated from $\bs X$, for example in the empirical Bayesian tradition of maximizing the marginal likelihood function in $m$ and $v$, having integrated out $\bs\eta$. 
Specifically, consider an i.i.d.\ Gaussian model 
for $\bs \eta$ which we denote 
$\bs \eta \sim G(\bs \eta |m,v)$.
This 
induces 
a marginal model on $\bs X$ that depends on $(m,v)$. 
The density of $\bs X$ given $(m,v)$ and the selection event $\{ \bs X \prec Y,  \  Y=y \}$ 
can be constructed, from which a marginal likelihood estimate $(\hat m,\hat v)$ is obtained. 
An empirical Bayes estimate of $\bs\eta$ is then obtained by optimizing 
$\pcon(\bs x | y, \bs\eta) \times G ( \bs \eta | \hat m, \hat v)$. 
%In our implementation, 
%we carry out this estimation by subjecting the joint distribution of $(\X,\bs\eta)$ to the selection condition $\{\X \prec y\}$ where $y$ is the observed value of $Y$; 
See Appendix \ref{app:C} for implementation details. This selection adjustment for estimation of $(m,v)$ is analogous to the {\it random-parameter} adjustment discussed by \cite{yekutieli2012adjusted}.

\begin{figure}[!t]
\centering
%%% Gaussian narrow (scale = 0.5)
% Created by tikzDevice version 0.12.6 on 2025-09-10 23:29:26
% !TEX encoding = UTF-8 Unicode
\begin{tikzpicture}[x=1pt,y=1pt]
\definecolor{fillColor}{RGB}{255,255,255}
\path[use as bounding box,fill=fillColor,fill opacity=0.00] (0,0) rectangle (101.18,108.41);
\begin{scope}
\path[clip] (  0.00,  0.00) rectangle (101.18,108.41);
\definecolor{drawColor}{RGB}{0,0,0}

\path[draw=drawColor,line width= 0.4pt,line join=round,line cap=round] ( 21.61, 18.60) -- ( 92.25, 18.60);

\path[draw=drawColor,line width= 0.4pt,line join=round,line cap=round] ( 21.61, 18.60) -- ( 21.61, 17.40);

\path[draw=drawColor,line width= 0.4pt,line join=round,line cap=round] ( 45.16, 18.60) -- ( 45.16, 17.40);

\path[draw=drawColor,line width= 0.4pt,line join=round,line cap=round] ( 68.71, 18.60) -- ( 68.71, 17.40);

\path[draw=drawColor,line width= 0.4pt,line join=round,line cap=round] ( 92.25, 18.60) -- ( 92.25, 17.40);

\node[text=drawColor,anchor=base,inner sep=0pt, outer sep=0pt, scale=  0.80] at ( 21.61,  9.60) {-10};

\node[text=drawColor,anchor=base,inner sep=0pt, outer sep=0pt, scale=  0.80] at ( 45.16,  9.60) {-5};

\node[text=drawColor,anchor=base,inner sep=0pt, outer sep=0pt, scale=  0.80] at ( 68.71,  9.60) {0};

\node[text=drawColor,anchor=base,inner sep=0pt, outer sep=0pt, scale=  0.80] at ( 92.25,  9.60) {5};

\path[draw=drawColor,line width= 0.4pt,line join=round,line cap=round] ( 18.60, 21.24) -- ( 18.60, 87.17);

\path[draw=drawColor,line width= 0.4pt,line join=round,line cap=round] ( 18.60, 21.24) -- ( 17.40, 21.24);

\path[draw=drawColor,line width= 0.4pt,line join=round,line cap=round] ( 18.60, 34.42) -- ( 17.40, 34.42);

\path[draw=drawColor,line width= 0.4pt,line join=round,line cap=round] ( 18.60, 47.61) -- ( 17.40, 47.61);

\path[draw=drawColor,line width= 0.4pt,line join=round,line cap=round] ( 18.60, 60.80) -- ( 17.40, 60.80);

\path[draw=drawColor,line width= 0.4pt,line join=round,line cap=round] ( 18.60, 73.98) -- ( 17.40, 73.98);

\path[draw=drawColor,line width= 0.4pt,line join=round,line cap=round] ( 18.60, 87.17) -- ( 17.40, 87.17);

\node[text=drawColor,rotate= 90.00,anchor=base,inner sep=0pt, outer sep=0pt, scale=  0.80] at ( 16.80, 21.24) {0.0};

\node[text=drawColor,rotate= 90.00,anchor=base,inner sep=0pt, outer sep=0pt, scale=  0.80] at ( 16.80, 47.61) {0.4};

\node[text=drawColor,rotate= 90.00,anchor=base,inner sep=0pt, outer sep=0pt, scale=  0.80] at ( 16.80, 73.98) {0.8};
\end{scope}
\begin{scope}
\path[clip] ( 18.60, 18.60) rectangle ( 99.98, 89.81);
\definecolor{drawColor}{RGB}{211,211,211}

\path[draw=drawColor,line width= 0.4pt,dash pattern=on 1pt off 3pt ,line join=round,line cap=round] ( 18.60, 21.24) -- ( 99.98, 21.24);

\path[draw=drawColor,line width= 0.4pt,dash pattern=on 1pt off 3pt ,line join=round,line cap=round] ( 18.60, 34.42) -- ( 99.98, 34.42);

\path[draw=drawColor,line width= 0.4pt,dash pattern=on 1pt off 3pt ,line join=round,line cap=round] ( 18.60, 47.61) -- ( 99.98, 47.61);

\path[draw=drawColor,line width= 0.4pt,dash pattern=on 1pt off 3pt ,line join=round,line cap=round] ( 18.60, 60.80) -- ( 99.98, 60.80);

\path[draw=drawColor,line width= 0.4pt,dash pattern=on 1pt off 3pt ,line join=round,line cap=round] ( 18.60, 73.98) -- ( 99.98, 73.98);

\path[draw=drawColor,line width= 0.4pt,dash pattern=on 1pt off 3pt ,line join=round,line cap=round] ( 18.60, 87.17) -- ( 99.98, 87.17);

\path[draw=drawColor,line width= 0.4pt,dash pattern=on 1pt off 3pt ,line join=round,line cap=round] ( 21.61, 18.60) -- ( 21.61, 89.81);

\path[draw=drawColor,line width= 0.4pt,dash pattern=on 1pt off 3pt ,line join=round,line cap=round] ( 45.16, 18.60) -- ( 45.16, 89.81);

\path[draw=drawColor,line width= 0.4pt,dash pattern=on 1pt off 3pt ,line join=round,line cap=round] ( 68.71, 18.60) -- ( 68.71, 89.81);

\path[draw=drawColor,line width= 0.4pt,dash pattern=on 1pt off 3pt ,line join=round,line cap=round] ( 92.25, 18.60) -- ( 92.25, 89.81);
\end{scope}
\begin{scope}
\path[clip] (  0.00,  0.00) rectangle (101.18,108.41);
\definecolor{drawColor}{RGB}{0,0,0}

\node[text=drawColor,anchor=base,inner sep=0pt, outer sep=0pt, scale=  1.00] at ( 59.29,  0.60) {\footnotesize$t$};

\node[text=drawColor,rotate= 90.00,anchor=base,inner sep=0pt, outer sep=0pt, scale=  1.00] at (  6.60, 54.20) {\footnotesize power};
\end{scope}
\begin{scope}
\path[clip] (  0.00,  0.00) rectangle (101.18,108.41);
\definecolor{drawColor}{RGB}{0,0,0}

\node[text=drawColor,anchor=base,inner sep=0pt, outer sep=0pt, scale=  1.00] at ( 59.29, 92.21) {\footnotesize $P= 1.83\times 10^{-5} $};
\end{scope}
\begin{scope}
\path[clip] ( 18.60, 18.60) rectangle ( 99.98, 89.81);
\definecolor{drawColor}{RGB}{97,208,79}

\path[draw=drawColor,line width= 0.4pt,line join=round,line cap=round] ( 21.61, 67.64) --
	( 23.97, 64.70) --
	( 26.32, 61.69) --
	( 28.68, 58.09) --
	( 31.03, 54.49) --
	( 33.39, 51.97) --
	( 35.74, 47.52) --
	( 38.10, 43.71) --
	( 40.45, 40.53) --
	( 42.81, 37.75) --
	( 45.16, 34.44) --
	( 47.52, 32.25) --
	( 49.87, 29.68) --
	( 52.22, 27.56) --
	( 54.58, 25.78) --
	( 56.93, 24.72) --
	( 59.29, 24.59) --
	( 61.64, 24.53) --
	( 64.00, 25.33) --
	( 66.35, 26.49) --
	( 68.71, 28.89) --
	( 71.06, 31.26) --
	( 73.42, 35.52) --
	( 75.77, 40.65) --
	( 78.13, 47.68) --
	( 80.48, 55.47) --
	( 82.84, 64.62) --
	( 85.19, 74.09) --
	( 87.55, 81.50) --
	( 89.90, 85.32) --
	( 92.25, 86.69) --
	( 94.61, 87.13) --
	( 96.96, 87.17);
\definecolor{drawColor}{RGB}{34,151,230}

\path[draw=drawColor,line width= 0.4pt,line join=round,line cap=round] ( 21.61, 70.36) --
	( 23.97, 68.33) --
	( 26.32, 65.92) --
	( 28.68, 62.91) --
	( 31.03, 59.78) --
	( 33.39, 57.46) --
	( 35.74, 53.36) --
	( 38.10, 49.34) --
	( 40.45, 46.28) --
	( 42.81, 43.65) --
	( 45.16, 40.41) --
	( 47.52, 36.71) --
	( 49.87, 33.82) --
	( 52.22, 31.15) --
	( 54.58, 28.77) --
	( 56.93, 27.02) --
	( 59.29, 26.25) --
	( 61.64, 25.89) --
	( 64.00, 25.84) --
	( 66.35, 26.11) --
	( 68.71, 27.76) --
	( 71.06, 29.69) --
	( 73.42, 32.88) --
	( 75.77, 36.57) --
	( 78.13, 42.14) --
	( 80.48, 49.61) --
	( 82.84, 58.10) --
	( 85.19, 69.62) --
	( 87.55, 78.91) --
	( 89.90, 84.73) --
	( 92.25, 86.58) --
	( 94.61, 87.12) --
	( 96.96, 87.17);
\definecolor{drawColor}{RGB}{40,226,229}

\path[draw=drawColor,line width= 0.4pt,line join=round,line cap=round] ( 21.61, 68.78) --
	( 23.97, 66.62) --
	( 26.32, 64.22) --
	( 28.68, 61.14) --
	( 31.03, 58.26) --
	( 33.39, 55.91) --
	( 35.74, 51.78) --
	( 38.10, 48.20) --
	( 40.45, 45.18) --
	( 42.81, 42.72) --
	( 45.16, 39.68) --
	( 47.52, 36.24) --
	( 49.87, 33.59) --
	( 52.22, 31.13) --
	( 54.58, 29.10) --
	( 56.93, 27.53) --
	( 59.29, 26.93) --
	( 61.64, 26.62) --
	( 64.00, 26.83) --
	( 66.35, 27.46) --
	( 68.71, 29.04) --
	( 71.06, 31.28) --
	( 73.42, 34.71) --
	( 75.77, 38.25) --
	( 78.13, 43.89) --
	( 80.48, 51.27) --
	( 82.84, 59.67) --
	( 85.19, 70.46) --
	( 87.55, 79.26) --
	( 89.90, 84.86) --
	( 92.25, 86.60) --
	( 94.61, 87.12) --
	( 96.96, 87.17);
\definecolor{drawColor}{RGB}{0,0,0}

\path[draw=drawColor,line width= 0.4pt,dash pattern=on 4pt off 4pt ,line join=round,line cap=round] ( 18.60, 24.53) -- ( 99.98, 24.53);

\path[draw=drawColor,line width= 0.4pt,dash pattern=on 4pt off 4pt ,line join=round,line cap=round] ( 59.29, 18.60) -- ( 59.29, 89.81);

\node[text=drawColor,rotate= 90.00,anchor=base,inner sep=0pt, outer sep=0pt, scale=  1.00] at ( 66.50, 70.69) {\tiny$\theta = -2 $};
\end{scope}
\end{tikzpicture}
% Created by tikzDevice version 0.12.6 on 2025-09-10 23:29:26
% !TEX encoding = UTF-8 Unicode
\begin{tikzpicture}[x=1pt,y=1pt]
\definecolor{fillColor}{RGB}{255,255,255}
\path[use as bounding box,fill=fillColor,fill opacity=0.00] (0,0) rectangle (101.18,108.41);
\begin{scope}
\path[clip] (  0.00,  0.00) rectangle (101.18,108.41);
\definecolor{drawColor}{RGB}{0,0,0}

\path[draw=drawColor,line width= 0.4pt,line join=round,line cap=round] ( 38.88, 18.60) -- ( 85.98, 18.60);

\path[draw=drawColor,line width= 0.4pt,line join=round,line cap=round] ( 38.88, 18.60) -- ( 38.88, 17.40);

\path[draw=drawColor,line width= 0.4pt,line join=round,line cap=round] ( 62.43, 18.60) -- ( 62.43, 17.40);

\path[draw=drawColor,line width= 0.4pt,line join=round,line cap=round] ( 85.98, 18.60) -- ( 85.98, 17.40);

\node[text=drawColor,anchor=base,inner sep=0pt, outer sep=0pt, scale=  0.80] at ( 38.88,  9.60) {-5};

\node[text=drawColor,anchor=base,inner sep=0pt, outer sep=0pt, scale=  0.80] at ( 62.43,  9.60) {0};

\node[text=drawColor,anchor=base,inner sep=0pt, outer sep=0pt, scale=  0.80] at ( 85.98,  9.60) {5};

\path[draw=drawColor,line width= 0.4pt,line join=round,line cap=round] ( 18.60, 21.24) -- ( 18.60, 87.17);

\path[draw=drawColor,line width= 0.4pt,line join=round,line cap=round] ( 18.60, 21.24) -- ( 17.40, 21.24);

\path[draw=drawColor,line width= 0.4pt,line join=round,line cap=round] ( 18.60, 34.42) -- ( 17.40, 34.42);

\path[draw=drawColor,line width= 0.4pt,line join=round,line cap=round] ( 18.60, 47.61) -- ( 17.40, 47.61);

\path[draw=drawColor,line width= 0.4pt,line join=round,line cap=round] ( 18.60, 60.80) -- ( 17.40, 60.80);

\path[draw=drawColor,line width= 0.4pt,line join=round,line cap=round] ( 18.60, 73.98) -- ( 17.40, 73.98);

\path[draw=drawColor,line width= 0.4pt,line join=round,line cap=round] ( 18.60, 87.17) -- ( 17.40, 87.17);

\node[text=drawColor,rotate= 90.00,anchor=base,inner sep=0pt, outer sep=0pt, scale=  0.80] at ( 16.80, 21.24) {0.0};

\node[text=drawColor,rotate= 90.00,anchor=base,inner sep=0pt, outer sep=0pt, scale=  0.80] at ( 16.80, 47.61) {0.4};

\node[text=drawColor,rotate= 90.00,anchor=base,inner sep=0pt, outer sep=0pt, scale=  0.80] at ( 16.80, 73.98) {0.8};
\end{scope}
\begin{scope}
\path[clip] ( 18.60, 18.60) rectangle ( 99.98, 89.81);
\definecolor{drawColor}{RGB}{211,211,211}

\path[draw=drawColor,line width= 0.4pt,dash pattern=on 1pt off 3pt ,line join=round,line cap=round] ( 18.60, 21.24) -- ( 99.98, 21.24);

\path[draw=drawColor,line width= 0.4pt,dash pattern=on 1pt off 3pt ,line join=round,line cap=round] ( 18.60, 34.42) -- ( 99.98, 34.42);

\path[draw=drawColor,line width= 0.4pt,dash pattern=on 1pt off 3pt ,line join=round,line cap=round] ( 18.60, 47.61) -- ( 99.98, 47.61);

\path[draw=drawColor,line width= 0.4pt,dash pattern=on 1pt off 3pt ,line join=round,line cap=round] ( 18.60, 60.80) -- ( 99.98, 60.80);

\path[draw=drawColor,line width= 0.4pt,dash pattern=on 1pt off 3pt ,line join=round,line cap=round] ( 18.60, 73.98) -- ( 99.98, 73.98);

\path[draw=drawColor,line width= 0.4pt,dash pattern=on 1pt off 3pt ,line join=round,line cap=round] ( 18.60, 87.17) -- ( 99.98, 87.17);

\path[draw=drawColor,line width= 0.4pt,dash pattern=on 1pt off 3pt ,line join=round,line cap=round] ( 38.88, 18.60) -- ( 38.88, 89.81);

\path[draw=drawColor,line width= 0.4pt,dash pattern=on 1pt off 3pt ,line join=round,line cap=round] ( 62.43, 18.60) -- ( 62.43, 89.81);

\path[draw=drawColor,line width= 0.4pt,dash pattern=on 1pt off 3pt ,line join=round,line cap=round] ( 85.98, 18.60) -- ( 85.98, 89.81);
\end{scope}
\begin{scope}
\path[clip] (  0.00,  0.00) rectangle (101.18,108.41);
\definecolor{drawColor}{RGB}{0,0,0}

\node[text=drawColor,anchor=base,inner sep=0pt, outer sep=0pt, scale=  1.00] at ( 59.29,  0.60) {\footnotesize$t$};

\node[text=drawColor,rotate= 90.00,anchor=base,inner sep=0pt, outer sep=0pt, scale=  1.00] at (  6.60, 54.20) {\footnotesize power};
\end{scope}
\begin{scope}
\path[clip] (  0.00,  0.00) rectangle (101.18,108.41);
\definecolor{drawColor}{RGB}{0,0,0}

\node[text=drawColor,anchor=base,inner sep=0pt, outer sep=0pt, scale=  1.00] at ( 59.29, 92.21) {\footnotesize $P= 0.00176 $};
\end{scope}
\begin{scope}
\path[clip] ( 18.60, 18.60) rectangle ( 99.98, 89.81);
\definecolor{drawColor}{RGB}{97,208,79}

\path[draw=drawColor,line width= 0.4pt,line join=round,line cap=round] ( 21.61, 71.48) --
	( 23.97, 68.83) --
	( 26.32, 65.76) --
	( 28.68, 62.47) --
	( 31.03, 59.38) --
	( 33.39, 55.78) --
	( 35.74, 51.95) --
	( 38.10, 47.81) --
	( 40.45, 43.54) --
	( 42.81, 40.32) --
	( 45.16, 36.61) --
	( 47.52, 33.16) --
	( 49.87, 30.68) --
	( 52.22, 28.04) --
	( 54.58, 26.54) --
	( 56.93, 25.38) --
	( 59.29, 24.58) --
	( 61.64, 24.59) --
	( 64.00, 25.81) --
	( 66.35, 27.48) --
	( 68.71, 30.08) --
	( 71.06, 34.25) --
	( 73.42, 40.17) --
	( 75.77, 47.99) --
	( 78.13, 57.80) --
	( 80.48, 68.72) --
	( 82.84, 78.19) --
	( 85.19, 84.31) --
	( 87.55, 86.49) --
	( 89.90, 87.10) --
	( 92.25, 87.17) --
	( 94.61, 87.17) --
	( 96.96, 87.17);
\definecolor{drawColor}{RGB}{34,151,230}

\path[draw=drawColor,line width= 0.4pt,line join=round,line cap=round] ( 21.61, 73.94) --
	( 23.97, 72.08) --
	( 26.32, 69.47) --
	( 28.68, 66.31) --
	( 31.03, 63.97) --
	( 33.39, 60.87) --
	( 35.74, 56.80) --
	( 38.10, 53.02) --
	( 40.45, 49.36) --
	( 42.81, 45.81) --
	( 45.16, 41.79) --
	( 47.52, 37.94) --
	( 49.87, 34.72) --
	( 52.22, 31.48) --
	( 54.58, 29.13) --
	( 56.93, 26.81) --
	( 59.29, 25.51) --
	( 61.64, 25.16) --
	( 64.00, 25.40) --
	( 66.35, 26.55) --
	( 68.71, 28.58) --
	( 71.06, 31.61) --
	( 73.42, 36.15) --
	( 75.77, 42.76) --
	( 78.13, 52.90) --
	( 80.48, 64.27) --
	( 82.84, 76.08) --
	( 85.19, 83.79) --
	( 87.55, 86.43) --
	( 89.90, 87.10) --
	( 92.25, 87.17) --
	( 94.61, 87.17) --
	( 96.96, 87.17);
\definecolor{drawColor}{RGB}{40,226,229}

\path[draw=drawColor,line width= 0.4pt,line join=round,line cap=round] ( 21.61, 72.61) --
	( 23.97, 70.62) --
	( 26.32, 68.19) --
	( 28.68, 65.03) --
	( 31.03, 62.60) --
	( 33.39, 59.62) --
	( 35.74, 55.71) --
	( 38.10, 51.89) --
	( 40.45, 48.47) --
	( 42.81, 44.93) --
	( 45.16, 41.19) --
	( 47.52, 37.40) --
	( 49.87, 34.53) --
	( 52.22, 31.40) --
	( 54.58, 29.20) --
	( 56.93, 27.12) --
	( 59.29, 26.16) --
	( 61.64, 25.89) --
	( 64.00, 26.17) --
	( 66.35, 27.61) --
	( 68.71, 29.60) --
	( 71.06, 32.93) --
	( 73.42, 37.61) --
	( 75.77, 44.10) --
	( 78.13, 53.99) --
	( 80.48, 65.09) --
	( 82.84, 76.42) --
	( 85.19, 83.88) --
	( 87.55, 86.44) --
	( 89.90, 87.10) --
	( 92.25, 87.17) --
	( 94.61, 87.17) --
	( 96.96, 87.17);
\definecolor{drawColor}{RGB}{0,0,0}

\path[draw=drawColor,line width= 0.4pt,dash pattern=on 4pt off 4pt ,line join=round,line cap=round] ( 18.60, 24.53) -- ( 99.98, 24.53);

\path[draw=drawColor,line width= 0.4pt,dash pattern=on 4pt off 4pt ,line join=round,line cap=round] ( 59.13, 18.60) -- ( 59.13, 89.81);

\node[text=drawColor,rotate= 90.00,anchor=base,inner sep=0pt, outer sep=0pt, scale=  1.00] at ( 66.34, 70.69) {\tiny$\theta = -0.7 $};
\end{scope}
\end{tikzpicture}
% Created by tikzDevice version 0.12.6 on 2025-09-10 23:29:26
% !TEX encoding = UTF-8 Unicode
\begin{tikzpicture}[x=1pt,y=1pt]
\definecolor{fillColor}{RGB}{255,255,255}
\path[use as bounding box,fill=fillColor,fill opacity=0.00] (0,0) rectangle (101.18,108.41);
\begin{scope}
\path[clip] (  0.00,  0.00) rectangle (101.18,108.41);
\definecolor{drawColor}{RGB}{0,0,0}

\path[draw=drawColor,line width= 0.4pt,line join=round,line cap=round] ( 32.60, 18.60) -- ( 79.70, 18.60);

\path[draw=drawColor,line width= 0.4pt,line join=round,line cap=round] ( 32.60, 18.60) -- ( 32.60, 17.40);

\path[draw=drawColor,line width= 0.4pt,line join=round,line cap=round] ( 56.15, 18.60) -- ( 56.15, 17.40);

\path[draw=drawColor,line width= 0.4pt,line join=round,line cap=round] ( 79.70, 18.60) -- ( 79.70, 17.40);

\node[text=drawColor,anchor=base,inner sep=0pt, outer sep=0pt, scale=  0.80] at ( 32.60,  9.60) {-5};

\node[text=drawColor,anchor=base,inner sep=0pt, outer sep=0pt, scale=  0.80] at ( 56.15,  9.60) {0};

\node[text=drawColor,anchor=base,inner sep=0pt, outer sep=0pt, scale=  0.80] at ( 79.70,  9.60) {5};

\path[draw=drawColor,line width= 0.4pt,line join=round,line cap=round] ( 18.60, 21.24) -- ( 18.60, 87.17);

\path[draw=drawColor,line width= 0.4pt,line join=round,line cap=round] ( 18.60, 21.24) -- ( 17.40, 21.24);

\path[draw=drawColor,line width= 0.4pt,line join=round,line cap=round] ( 18.60, 34.42) -- ( 17.40, 34.42);

\path[draw=drawColor,line width= 0.4pt,line join=round,line cap=round] ( 18.60, 47.61) -- ( 17.40, 47.61);

\path[draw=drawColor,line width= 0.4pt,line join=round,line cap=round] ( 18.60, 60.80) -- ( 17.40, 60.80);

\path[draw=drawColor,line width= 0.4pt,line join=round,line cap=round] ( 18.60, 73.98) -- ( 17.40, 73.98);

\path[draw=drawColor,line width= 0.4pt,line join=round,line cap=round] ( 18.60, 87.17) -- ( 17.40, 87.17);

\node[text=drawColor,rotate= 90.00,anchor=base,inner sep=0pt, outer sep=0pt, scale=  0.80] at ( 16.80, 21.24) {0.0};

\node[text=drawColor,rotate= 90.00,anchor=base,inner sep=0pt, outer sep=0pt, scale=  0.80] at ( 16.80, 47.61) {0.4};

\node[text=drawColor,rotate= 90.00,anchor=base,inner sep=0pt, outer sep=0pt, scale=  0.80] at ( 16.80, 73.98) {0.8};
\end{scope}
\begin{scope}
\path[clip] ( 18.60, 18.60) rectangle ( 99.98, 89.81);
\definecolor{drawColor}{RGB}{211,211,211}

\path[draw=drawColor,line width= 0.4pt,dash pattern=on 1pt off 3pt ,line join=round,line cap=round] ( 18.60, 21.24) -- ( 99.98, 21.24);

\path[draw=drawColor,line width= 0.4pt,dash pattern=on 1pt off 3pt ,line join=round,line cap=round] ( 18.60, 34.42) -- ( 99.98, 34.42);

\path[draw=drawColor,line width= 0.4pt,dash pattern=on 1pt off 3pt ,line join=round,line cap=round] ( 18.60, 47.61) -- ( 99.98, 47.61);

\path[draw=drawColor,line width= 0.4pt,dash pattern=on 1pt off 3pt ,line join=round,line cap=round] ( 18.60, 60.80) -- ( 99.98, 60.80);

\path[draw=drawColor,line width= 0.4pt,dash pattern=on 1pt off 3pt ,line join=round,line cap=round] ( 18.60, 73.98) -- ( 99.98, 73.98);

\path[draw=drawColor,line width= 0.4pt,dash pattern=on 1pt off 3pt ,line join=round,line cap=round] ( 18.60, 87.17) -- ( 99.98, 87.17);

\path[draw=drawColor,line width= 0.4pt,dash pattern=on 1pt off 3pt ,line join=round,line cap=round] ( 32.60, 18.60) -- ( 32.60, 89.81);

\path[draw=drawColor,line width= 0.4pt,dash pattern=on 1pt off 3pt ,line join=round,line cap=round] ( 56.15, 18.60) -- ( 56.15, 89.81);

\path[draw=drawColor,line width= 0.4pt,dash pattern=on 1pt off 3pt ,line join=round,line cap=round] ( 79.70, 18.60) -- ( 79.70, 89.81);
\end{scope}
\begin{scope}
\path[clip] (  0.00,  0.00) rectangle (101.18,108.41);
\definecolor{drawColor}{RGB}{0,0,0}

\node[text=drawColor,anchor=base,inner sep=0pt, outer sep=0pt, scale=  1.00] at ( 59.29,  0.60) {\footnotesize$t$};

\node[text=drawColor,rotate= 90.00,anchor=base,inner sep=0pt, outer sep=0pt, scale=  1.00] at (  6.60, 54.20) {\footnotesize power};
\end{scope}
\begin{scope}
\path[clip] (  0.00,  0.00) rectangle (101.18,108.41);
\definecolor{drawColor}{RGB}{0,0,0}

\node[text=drawColor,anchor=base,inner sep=0pt, outer sep=0pt, scale=  1.00] at ( 59.29, 92.21) {\footnotesize $P= 0.0515 $};
\end{scope}
\begin{scope}
\path[clip] ( 18.60, 18.60) rectangle ( 99.98, 89.81);
\definecolor{drawColor}{RGB}{97,208,79}

\path[draw=drawColor,line width= 0.4pt,line join=round,line cap=round] ( 21.61, 76.14) --
	( 23.97, 73.47) --
	( 26.32, 71.28) --
	( 28.68, 67.51) --
	( 31.03, 64.76) --
	( 33.39, 61.70) --
	( 35.74, 57.53) --
	( 38.10, 53.17) --
	( 40.45, 48.88) --
	( 42.81, 44.68) --
	( 45.16, 40.38) --
	( 47.52, 36.16) --
	( 49.87, 32.72) --
	( 52.22, 29.43) --
	( 54.58, 27.00) --
	( 56.93, 25.38) --
	( 59.29, 24.63) --
	( 61.64, 24.73) --
	( 64.00, 25.87) --
	( 66.35, 28.38) --
	( 68.71, 32.54) --
	( 71.06, 39.30) --
	( 73.42, 49.26) --
	( 75.77, 61.45) --
	( 78.13, 73.32) --
	( 80.48, 82.03) --
	( 82.84, 85.86) --
	( 85.19, 86.88) --
	( 87.55, 87.13) --
	( 89.90, 87.17) --
	( 92.25, 87.17) --
	( 94.61, 87.17) --
	( 96.96, 87.17);
\definecolor{drawColor}{RGB}{34,151,230}

\path[draw=drawColor,line width= 0.4pt,line join=round,line cap=round] ( 21.61, 77.81) --
	( 23.97, 75.74) --
	( 26.32, 74.05) --
	( 28.68, 71.28) --
	( 31.03, 68.66) --
	( 33.39, 66.24) --
	( 35.74, 62.09) --
	( 38.10, 58.11) --
	( 40.45, 53.85) --
	( 42.81, 49.55) --
	( 45.16, 44.93) --
	( 47.52, 40.23) --
	( 49.87, 36.32) --
	( 52.22, 32.13) --
	( 54.58, 28.98) --
	( 56.93, 26.65) --
	( 59.29, 24.97) --
	( 61.64, 24.56) --
	( 64.00, 25.31) --
	( 66.35, 26.91) --
	( 68.71, 30.16) --
	( 71.06, 35.72) --
	( 73.42, 45.39) --
	( 75.77, 58.18) --
	( 78.13, 71.38) --
	( 80.48, 81.60) --
	( 82.84, 85.81) --
	( 85.19, 86.88) --
	( 87.55, 87.13) --
	( 89.90, 87.17) --
	( 92.25, 87.17) --
	( 94.61, 87.17) --
	( 96.96, 87.17);
\definecolor{drawColor}{RGB}{40,226,229}

\path[draw=drawColor,line width= 0.4pt,line join=round,line cap=round] ( 21.61, 77.03) --
	( 23.97, 74.68) --
	( 26.32, 73.12) --
	( 28.68, 70.14) --
	( 31.03, 67.55) --
	( 33.39, 65.27) --
	( 35.74, 61.15) --
	( 38.10, 57.27) --
	( 40.45, 53.07) --
	( 42.81, 49.02) --
	( 45.16, 44.40) --
	( 47.52, 39.81) --
	( 49.87, 35.98) --
	( 52.22, 32.05) --
	( 54.58, 28.99) --
	( 56.93, 26.92) --
	( 59.29, 25.32) --
	( 61.64, 25.11) --
	( 64.00, 25.92) --
	( 66.35, 27.62) --
	( 68.71, 30.93) --
	( 71.06, 36.82) --
	( 73.42, 46.16) --
	( 75.77, 58.72) --
	( 78.13, 71.65) --
	( 80.48, 81.63) --
	( 82.84, 85.82) --
	( 85.19, 86.88) --
	( 87.55, 87.13) --
	( 89.90, 87.17) --
	( 92.25, 87.17) --
	( 94.61, 87.17) --
	( 96.96, 87.17);
\definecolor{drawColor}{RGB}{0,0,0}

\path[draw=drawColor,line width= 0.4pt,dash pattern=on 4pt off 4pt ,line join=round,line cap=round] ( 18.60, 24.53) -- ( 99.98, 24.53);

\path[draw=drawColor,line width= 0.4pt,dash pattern=on 4pt off 4pt ,line join=round,line cap=round] ( 59.45, 18.60) -- ( 59.45, 89.81);

\node[text=drawColor,rotate= 90.00,anchor=base,inner sep=0pt, outer sep=0pt, scale=  1.00] at ( 66.65, 70.69) {\tiny$\theta = 0.7 $};
\end{scope}
\end{tikzpicture}
% Created by tikzDevice version 0.12.6 on 2025-09-10 23:29:26
% !TEX encoding = UTF-8 Unicode
\begin{tikzpicture}[x=1pt,y=1pt]
\definecolor{fillColor}{RGB}{255,255,255}
\path[use as bounding box,fill=fillColor,fill opacity=0.00] (0,0) rectangle (101.18,108.41);
\begin{scope}
\path[clip] (  0.00,  0.00) rectangle (101.18,108.41);
\definecolor{drawColor}{RGB}{0,0,0}

\path[draw=drawColor,line width= 0.4pt,line join=round,line cap=round] ( 26.32, 18.60) -- ( 96.96, 18.60);

\path[draw=drawColor,line width= 0.4pt,line join=round,line cap=round] ( 26.32, 18.60) -- ( 26.32, 17.40);

\path[draw=drawColor,line width= 0.4pt,line join=round,line cap=round] ( 49.87, 18.60) -- ( 49.87, 17.40);

\path[draw=drawColor,line width= 0.4pt,line join=round,line cap=round] ( 73.42, 18.60) -- ( 73.42, 17.40);

\path[draw=drawColor,line width= 0.4pt,line join=round,line cap=round] ( 96.96, 18.60) -- ( 96.96, 17.40);

\node[text=drawColor,anchor=base,inner sep=0pt, outer sep=0pt, scale=  0.80] at ( 26.32,  9.60) {-5};

\node[text=drawColor,anchor=base,inner sep=0pt, outer sep=0pt, scale=  0.80] at ( 49.87,  9.60) {0};

\node[text=drawColor,anchor=base,inner sep=0pt, outer sep=0pt, scale=  0.80] at ( 73.42,  9.60) {5};

\node[text=drawColor,anchor=base,inner sep=0pt, outer sep=0pt, scale=  0.80] at ( 96.96,  9.60) {10};

\path[draw=drawColor,line width= 0.4pt,line join=round,line cap=round] ( 18.60, 21.24) -- ( 18.60, 87.17);

\path[draw=drawColor,line width= 0.4pt,line join=round,line cap=round] ( 18.60, 21.24) -- ( 17.40, 21.24);

\path[draw=drawColor,line width= 0.4pt,line join=round,line cap=round] ( 18.60, 34.42) -- ( 17.40, 34.42);

\path[draw=drawColor,line width= 0.4pt,line join=round,line cap=round] ( 18.60, 47.61) -- ( 17.40, 47.61);

\path[draw=drawColor,line width= 0.4pt,line join=round,line cap=round] ( 18.60, 60.80) -- ( 17.40, 60.80);

\path[draw=drawColor,line width= 0.4pt,line join=round,line cap=round] ( 18.60, 73.98) -- ( 17.40, 73.98);

\path[draw=drawColor,line width= 0.4pt,line join=round,line cap=round] ( 18.60, 87.17) -- ( 17.40, 87.17);

\node[text=drawColor,rotate= 90.00,anchor=base,inner sep=0pt, outer sep=0pt, scale=  0.80] at ( 16.80, 21.24) {0.0};

\node[text=drawColor,rotate= 90.00,anchor=base,inner sep=0pt, outer sep=0pt, scale=  0.80] at ( 16.80, 47.61) {0.4};

\node[text=drawColor,rotate= 90.00,anchor=base,inner sep=0pt, outer sep=0pt, scale=  0.80] at ( 16.80, 73.98) {0.8};
\end{scope}
\begin{scope}
\path[clip] ( 18.60, 18.60) rectangle ( 99.98, 89.81);
\definecolor{drawColor}{RGB}{211,211,211}

\path[draw=drawColor,line width= 0.4pt,dash pattern=on 1pt off 3pt ,line join=round,line cap=round] ( 18.60, 21.24) -- ( 99.98, 21.24);

\path[draw=drawColor,line width= 0.4pt,dash pattern=on 1pt off 3pt ,line join=round,line cap=round] ( 18.60, 34.42) -- ( 99.98, 34.42);

\path[draw=drawColor,line width= 0.4pt,dash pattern=on 1pt off 3pt ,line join=round,line cap=round] ( 18.60, 47.61) -- ( 99.98, 47.61);

\path[draw=drawColor,line width= 0.4pt,dash pattern=on 1pt off 3pt ,line join=round,line cap=round] ( 18.60, 60.80) -- ( 99.98, 60.80);

\path[draw=drawColor,line width= 0.4pt,dash pattern=on 1pt off 3pt ,line join=round,line cap=round] ( 18.60, 73.98) -- ( 99.98, 73.98);

\path[draw=drawColor,line width= 0.4pt,dash pattern=on 1pt off 3pt ,line join=round,line cap=round] ( 18.60, 87.17) -- ( 99.98, 87.17);

\path[draw=drawColor,line width= 0.4pt,dash pattern=on 1pt off 3pt ,line join=round,line cap=round] ( 26.32, 18.60) -- ( 26.32, 89.81);

\path[draw=drawColor,line width= 0.4pt,dash pattern=on 1pt off 3pt ,line join=round,line cap=round] ( 49.87, 18.60) -- ( 49.87, 89.81);

\path[draw=drawColor,line width= 0.4pt,dash pattern=on 1pt off 3pt ,line join=round,line cap=round] ( 73.42, 18.60) -- ( 73.42, 89.81);

\path[draw=drawColor,line width= 0.4pt,dash pattern=on 1pt off 3pt ,line join=round,line cap=round] ( 96.96, 18.60) -- ( 96.96, 89.81);
\end{scope}
\begin{scope}
\path[clip] (  0.00,  0.00) rectangle (101.18,108.41);
\definecolor{drawColor}{RGB}{0,0,0}

\node[text=drawColor,anchor=base,inner sep=0pt, outer sep=0pt, scale=  1.00] at ( 59.29,  0.60) {\footnotesize$t$};

\node[text=drawColor,rotate= 90.00,anchor=base,inner sep=0pt, outer sep=0pt, scale=  1.00] at (  6.60, 54.20) {\footnotesize power};
\end{scope}
\begin{scope}
\path[clip] (  0.00,  0.00) rectangle (101.18,108.41);
\definecolor{drawColor}{RGB}{0,0,0}

\node[text=drawColor,anchor=base,inner sep=0pt, outer sep=0pt, scale=  1.00] at ( 59.29, 92.21) {\footnotesize $P= 0.326 $};
\end{scope}
\begin{scope}
\path[clip] ( 18.60, 18.60) rectangle ( 99.98, 89.81);
\definecolor{drawColor}{RGB}{97,208,79}

\path[draw=drawColor,line width= 0.4pt,line join=round,line cap=round] ( 21.61, 80.83) --
	( 23.97, 78.76) --
	( 26.32, 77.12) --
	( 28.68, 74.19) --
	( 31.03, 71.94) --
	( 33.39, 67.81) --
	( 35.74, 64.47) --
	( 38.10, 60.36) --
	( 40.45, 56.84) --
	( 42.81, 51.28) --
	( 45.16, 46.66) --
	( 47.52, 41.49) --
	( 49.87, 36.13) --
	( 52.22, 31.90) --
	( 54.58, 28.06) --
	( 56.93, 25.80) --
	( 59.29, 24.28) --
	( 61.64, 24.89) --
	( 64.00, 26.95) --
	( 66.35, 31.19) --
	( 68.71, 39.32) --
	( 71.06, 50.99) --
	( 73.42, 64.74) --
	( 75.77, 76.28) --
	( 78.13, 83.15) --
	( 80.48, 85.98) --
	( 82.84, 86.96) --
	( 85.19, 87.13) --
	( 87.55, 87.16) --
	( 89.90, 87.17) --
	( 92.25, 87.17) --
	( 94.61, 87.17) --
	( 96.96, 87.17);
\definecolor{drawColor}{RGB}{34,151,230}

\path[draw=drawColor,line width= 0.4pt,line join=round,line cap=round] ( 21.61, 81.89) --
	( 23.97, 80.35) --
	( 26.32, 78.91) --
	( 28.68, 76.79) --
	( 31.03, 74.79) --
	( 33.39, 71.17) --
	( 35.74, 67.98) --
	( 38.10, 64.26) --
	( 40.45, 60.97) --
	( 42.81, 55.74) --
	( 45.16, 50.29) --
	( 47.52, 45.18) --
	( 49.87, 39.24) --
	( 52.22, 34.05) --
	( 54.58, 29.41) --
	( 56.93, 26.46) --
	( 59.29, 24.22) --
	( 61.64, 24.55) --
	( 64.00, 25.66) --
	( 66.35, 29.32) --
	( 68.71, 36.63) --
	( 71.06, 48.70) --
	( 73.42, 63.42) --
	( 75.77, 75.93) --
	( 78.13, 83.09) --
	( 80.48, 85.98) --
	( 82.84, 86.96) --
	( 85.19, 87.13) --
	( 87.55, 87.16) --
	( 89.90, 87.17) --
	( 92.25, 87.17) --
	( 94.61, 87.17) --
	( 96.96, 87.17);
\definecolor{drawColor}{RGB}{40,226,229}

\path[draw=drawColor,line width= 0.4pt,line join=round,line cap=round] ( 21.61, 81.46) --
	( 23.97, 79.84) --
	( 26.32, 78.30) --
	( 28.68, 76.19) --
	( 31.03, 74.17) --
	( 33.39, 70.57) --
	( 35.74, 67.36) --
	( 38.10, 63.65) --
	( 40.45, 60.36) --
	( 42.81, 55.24) --
	( 45.16, 49.81) --
	( 47.52, 44.70) --
	( 49.87, 38.89) --
	( 52.22, 33.85) --
	( 54.58, 29.35) --
	( 56.93, 26.56) --
	( 59.29, 24.51) --
	( 61.64, 24.84) --
	( 64.00, 26.12) --
	( 66.35, 29.79) --
	( 68.71, 37.17) --
	( 71.06, 49.01) --
	( 73.42, 63.64) --
	( 75.77, 75.97) --
	( 78.13, 83.09) --
	( 80.48, 85.98) --
	( 82.84, 86.96) --
	( 85.19, 87.13) --
	( 87.55, 87.16) --
	( 89.90, 87.17) --
	( 92.25, 87.17) --
	( 94.61, 87.17) --
	( 96.96, 87.17);
\definecolor{drawColor}{RGB}{0,0,0}

\path[draw=drawColor,line width= 0.4pt,dash pattern=on 4pt off 4pt ,line join=round,line cap=round] ( 18.60, 24.53) -- ( 99.98, 24.53);

\path[draw=drawColor,line width= 0.4pt,dash pattern=on 4pt off 4pt ,line join=round,line cap=round] ( 59.29, 18.60) -- ( 59.29, 89.81);

\node[text=drawColor,rotate= 90.00,anchor=base,inner sep=0pt, outer sep=0pt, scale=  1.00] at ( 66.50, 70.69) {\tiny$\theta = 2 $};
\end{scope}
\end{tikzpicture}
% Created by tikzDevice version 0.12.6 on 2025-09-10 23:29:26
% !TEX encoding = UTF-8 Unicode
\begin{tikzpicture}[x=1pt,y=1pt]
\definecolor{fillColor}{RGB}{255,255,255}
\path[use as bounding box,fill=fillColor,fill opacity=0.00] (0,0) rectangle (101.18,108.41);
\begin{scope}
\path[clip] (  0.00,  0.00) rectangle (101.18,108.41);
\definecolor{drawColor}{RGB}{0,0,0}

\path[draw=drawColor,line width= 0.4pt,line join=round,line cap=round] ( 31.03, 18.60) -- ( 78.13, 18.60);

\path[draw=drawColor,line width= 0.4pt,line join=round,line cap=round] ( 31.03, 18.60) -- ( 31.03, 17.40);

\path[draw=drawColor,line width= 0.4pt,line join=round,line cap=round] ( 54.58, 18.60) -- ( 54.58, 17.40);

\path[draw=drawColor,line width= 0.4pt,line join=round,line cap=round] ( 78.13, 18.60) -- ( 78.13, 17.40);

\node[text=drawColor,anchor=base,inner sep=0pt, outer sep=0pt, scale=  0.80] at ( 31.03,  9.60) {-10};

\node[text=drawColor,anchor=base,inner sep=0pt, outer sep=0pt, scale=  0.80] at ( 54.58,  9.60) {-5};

\node[text=drawColor,anchor=base,inner sep=0pt, outer sep=0pt, scale=  0.80] at ( 78.13,  9.60) {0};

\path[draw=drawColor,line width= 0.4pt,line join=round,line cap=round] ( 18.60, 21.24) -- ( 18.60, 87.17);

\path[draw=drawColor,line width= 0.4pt,line join=round,line cap=round] ( 18.60, 21.24) -- ( 17.40, 21.24);

\path[draw=drawColor,line width= 0.4pt,line join=round,line cap=round] ( 18.60, 34.42) -- ( 17.40, 34.42);

\path[draw=drawColor,line width= 0.4pt,line join=round,line cap=round] ( 18.60, 47.61) -- ( 17.40, 47.61);

\path[draw=drawColor,line width= 0.4pt,line join=round,line cap=round] ( 18.60, 60.80) -- ( 17.40, 60.80);

\path[draw=drawColor,line width= 0.4pt,line join=round,line cap=round] ( 18.60, 73.98) -- ( 17.40, 73.98);

\path[draw=drawColor,line width= 0.4pt,line join=round,line cap=round] ( 18.60, 87.17) -- ( 17.40, 87.17);

\node[text=drawColor,rotate= 90.00,anchor=base,inner sep=0pt, outer sep=0pt, scale=  0.80] at ( 16.80, 21.24) {0.0};

\node[text=drawColor,rotate= 90.00,anchor=base,inner sep=0pt, outer sep=0pt, scale=  0.80] at ( 16.80, 47.61) {0.4};

\node[text=drawColor,rotate= 90.00,anchor=base,inner sep=0pt, outer sep=0pt, scale=  0.80] at ( 16.80, 73.98) {0.8};
\end{scope}
\begin{scope}
\path[clip] ( 18.60, 18.60) rectangle ( 99.98, 89.81);
\definecolor{drawColor}{RGB}{211,211,211}

\path[draw=drawColor,line width= 0.4pt,dash pattern=on 1pt off 3pt ,line join=round,line cap=round] ( 18.60, 21.24) -- ( 99.98, 21.24);

\path[draw=drawColor,line width= 0.4pt,dash pattern=on 1pt off 3pt ,line join=round,line cap=round] ( 18.60, 34.42) -- ( 99.98, 34.42);

\path[draw=drawColor,line width= 0.4pt,dash pattern=on 1pt off 3pt ,line join=round,line cap=round] ( 18.60, 47.61) -- ( 99.98, 47.61);

\path[draw=drawColor,line width= 0.4pt,dash pattern=on 1pt off 3pt ,line join=round,line cap=round] ( 18.60, 60.80) -- ( 99.98, 60.80);

\path[draw=drawColor,line width= 0.4pt,dash pattern=on 1pt off 3pt ,line join=round,line cap=round] ( 18.60, 73.98) -- ( 99.98, 73.98);

\path[draw=drawColor,line width= 0.4pt,dash pattern=on 1pt off 3pt ,line join=round,line cap=round] ( 18.60, 87.17) -- ( 99.98, 87.17);

\path[draw=drawColor,line width= 0.4pt,dash pattern=on 1pt off 3pt ,line join=round,line cap=round] ( 31.03, 18.60) -- ( 31.03, 89.81);

\path[draw=drawColor,line width= 0.4pt,dash pattern=on 1pt off 3pt ,line join=round,line cap=round] ( 54.58, 18.60) -- ( 54.58, 89.81);

\path[draw=drawColor,line width= 0.4pt,dash pattern=on 1pt off 3pt ,line join=round,line cap=round] ( 78.13, 18.60) -- ( 78.13, 89.81);
\end{scope}
\begin{scope}
\path[clip] (  0.00,  0.00) rectangle (101.18,108.41);
\definecolor{drawColor}{RGB}{0,0,0}

\node[text=drawColor,anchor=base,inner sep=0pt, outer sep=0pt, scale=  1.00] at ( 59.29,  0.60) {\footnotesize$t$};

\node[text=drawColor,rotate= 90.00,anchor=base,inner sep=0pt, outer sep=0pt, scale=  1.00] at (  6.60, 54.20) {\footnotesize power};
\end{scope}
\begin{scope}
\path[clip] (  0.00,  0.00) rectangle (101.18,108.41);
\definecolor{drawColor}{RGB}{0,0,0}

\node[text=drawColor,anchor=base,inner sep=0pt, outer sep=0pt, scale=  1.00] at ( 59.29, 92.21) {\footnotesize $P= 1.72\times 10^{-12} $};
\end{scope}
\begin{scope}
\path[clip] ( 18.60, 18.60) rectangle ( 99.98, 89.81);
\definecolor{drawColor}{RGB}{97,208,79}

\path[draw=drawColor,line width= 0.4pt,line join=round,line cap=round] ( 21.61, 68.44) --
	( 23.97, 64.84) --
	( 26.32, 61.75) --
	( 28.68, 58.34) --
	( 31.03, 54.87) --
	( 33.39, 50.91) --
	( 35.74, 47.23) --
	( 38.10, 43.13) --
	( 40.45, 39.97) --
	( 42.81, 36.45) --
	( 45.16, 33.67) --
	( 47.52, 30.93) --
	( 49.87, 28.32) --
	( 52.22, 26.90) --
	( 54.58, 25.60) --
	( 56.93, 25.11) --
	( 59.29, 24.70) --
	( 61.64, 24.47) --
	( 64.00, 25.76) --
	( 66.35, 26.16) --
	( 68.71, 28.17) --
	( 71.06, 30.62) --
	( 73.42, 34.02) --
	( 75.77, 37.75) --
	( 78.13, 41.95) --
	( 80.48, 48.62) --
	( 82.84, 54.69) --
	( 85.19, 61.39) --
	( 87.55, 67.51) --
	( 89.90, 74.23) --
	( 92.25, 79.03) --
	( 94.61, 82.92) --
	( 96.96, 85.33);
\definecolor{drawColor}{RGB}{34,151,230}

\path[draw=drawColor,line width= 0.4pt,line join=round,line cap=round] ( 21.61, 82.43) --
	( 23.97, 81.53) --
	( 26.32, 80.11) --
	( 28.68, 78.70) --
	( 31.03, 76.92) --
	( 33.39, 74.38) --
	( 35.74, 72.38) --
	( 38.10, 68.46) --
	( 40.45, 65.62) --
	( 42.81, 62.68) --
	( 45.16, 58.76) --
	( 47.52, 55.06) --
	( 49.87, 50.63) --
	( 52.22, 45.87) --
	( 54.58, 41.97) --
	( 56.93, 37.59) --
	( 59.29, 34.30) --
	( 61.64, 30.63) --
	( 64.00, 27.76) --
	( 66.35, 25.79) --
	( 68.71, 24.49) --
	( 71.06, 23.53) --
	( 73.42, 23.78) --
	( 75.77, 24.13) --
	( 78.13, 24.71) --
	( 80.48, 26.53) --
	( 82.84, 28.96) --
	( 85.19, 32.73) --
	( 87.55, 37.00) --
	( 89.90, 44.77) --
	( 92.25, 54.00) --
	( 94.61, 64.51) --
	( 96.96, 75.18);
\definecolor{drawColor}{RGB}{40,226,229}

\path[draw=drawColor,line width= 0.4pt,line join=round,line cap=round] ( 21.61, 77.43) --
	( 23.97, 76.07) --
	( 26.32, 75.12) --
	( 28.68, 72.68) --
	( 31.03, 71.20) --
	( 33.39, 68.52) --
	( 35.74, 66.78) --
	( 38.10, 63.20) --
	( 40.45, 60.48) --
	( 42.81, 58.09) --
	( 45.16, 54.98) --
	( 47.52, 51.94) --
	( 49.87, 48.22) --
	( 52.22, 44.56) --
	( 54.58, 41.42) --
	( 56.93, 37.81) --
	( 59.29, 35.76) --
	( 61.64, 32.66) --
	( 64.00, 30.57) --
	( 66.35, 29.00) --
	( 68.71, 28.54) --
	( 71.06, 27.71) --
	( 73.42, 28.56) --
	( 75.77, 29.52) --
	( 78.13, 30.89) --
	( 80.48, 33.41) --
	( 82.84, 35.94) --
	( 85.19, 40.35) --
	( 87.55, 44.37) --
	( 89.90, 51.26) --
	( 92.25, 58.95) --
	( 94.61, 67.51) --
	( 96.96, 76.05);
\definecolor{drawColor}{RGB}{0,0,0}

\path[draw=drawColor,line width= 0.4pt,dash pattern=on 4pt off 4pt ,line join=round,line cap=round] ( 18.60, 24.53) -- ( 99.98, 24.53);

\path[draw=drawColor,line width= 0.4pt,dash pattern=on 4pt off 4pt ,line join=round,line cap=round] ( 59.29, 18.60) -- ( 59.29, 89.81);

\node[text=drawColor,rotate= 90.00,anchor=base,inner sep=0pt, outer sep=0pt, scale=  1.00] at ( 66.50, 70.69) {\tiny$\theta = -4 $};
\end{scope}
\end{tikzpicture}
% Created by tikzDevice version 0.12.6 on 2025-09-10 23:29:26
% !TEX encoding = UTF-8 Unicode
\begin{tikzpicture}[x=1pt,y=1pt]
\definecolor{fillColor}{RGB}{255,255,255}
\path[use as bounding box,fill=fillColor,fill opacity=0.00] (0,0) rectangle (101.18,108.41);
\begin{scope}
\path[clip] (  0.00,  0.00) rectangle (101.18,108.41);
\definecolor{drawColor}{RGB}{0,0,0}

\path[draw=drawColor,line width= 0.4pt,line join=round,line cap=round] ( 42.02, 18.60) -- ( 89.12, 18.60);

\path[draw=drawColor,line width= 0.4pt,line join=round,line cap=round] ( 42.02, 18.60) -- ( 42.02, 17.40);

\path[draw=drawColor,line width= 0.4pt,line join=round,line cap=round] ( 65.57, 18.60) -- ( 65.57, 17.40);

\path[draw=drawColor,line width= 0.4pt,line join=round,line cap=round] ( 89.12, 18.60) -- ( 89.12, 17.40);

\node[text=drawColor,anchor=base,inner sep=0pt, outer sep=0pt, scale=  0.80] at ( 42.02,  9.60) {-5};

\node[text=drawColor,anchor=base,inner sep=0pt, outer sep=0pt, scale=  0.80] at ( 65.57,  9.60) {0};

\node[text=drawColor,anchor=base,inner sep=0pt, outer sep=0pt, scale=  0.80] at ( 89.12,  9.60) {5};

\path[draw=drawColor,line width= 0.4pt,line join=round,line cap=round] ( 18.60, 21.24) -- ( 18.60, 87.17);

\path[draw=drawColor,line width= 0.4pt,line join=round,line cap=round] ( 18.60, 21.24) -- ( 17.40, 21.24);

\path[draw=drawColor,line width= 0.4pt,line join=round,line cap=round] ( 18.60, 34.42) -- ( 17.40, 34.42);

\path[draw=drawColor,line width= 0.4pt,line join=round,line cap=round] ( 18.60, 47.61) -- ( 17.40, 47.61);

\path[draw=drawColor,line width= 0.4pt,line join=round,line cap=round] ( 18.60, 60.80) -- ( 17.40, 60.80);

\path[draw=drawColor,line width= 0.4pt,line join=round,line cap=round] ( 18.60, 73.98) -- ( 17.40, 73.98);

\path[draw=drawColor,line width= 0.4pt,line join=round,line cap=round] ( 18.60, 87.17) -- ( 17.40, 87.17);

\node[text=drawColor,rotate= 90.00,anchor=base,inner sep=0pt, outer sep=0pt, scale=  0.80] at ( 16.80, 21.24) {0.0};

\node[text=drawColor,rotate= 90.00,anchor=base,inner sep=0pt, outer sep=0pt, scale=  0.80] at ( 16.80, 47.61) {0.4};

\node[text=drawColor,rotate= 90.00,anchor=base,inner sep=0pt, outer sep=0pt, scale=  0.80] at ( 16.80, 73.98) {0.8};
\end{scope}
\begin{scope}
\path[clip] ( 18.60, 18.60) rectangle ( 99.98, 89.81);
\definecolor{drawColor}{RGB}{211,211,211}

\path[draw=drawColor,line width= 0.4pt,dash pattern=on 1pt off 3pt ,line join=round,line cap=round] ( 18.60, 21.24) -- ( 99.98, 21.24);

\path[draw=drawColor,line width= 0.4pt,dash pattern=on 1pt off 3pt ,line join=round,line cap=round] ( 18.60, 34.42) -- ( 99.98, 34.42);

\path[draw=drawColor,line width= 0.4pt,dash pattern=on 1pt off 3pt ,line join=round,line cap=round] ( 18.60, 47.61) -- ( 99.98, 47.61);

\path[draw=drawColor,line width= 0.4pt,dash pattern=on 1pt off 3pt ,line join=round,line cap=round] ( 18.60, 60.80) -- ( 99.98, 60.80);

\path[draw=drawColor,line width= 0.4pt,dash pattern=on 1pt off 3pt ,line join=round,line cap=round] ( 18.60, 73.98) -- ( 99.98, 73.98);

\path[draw=drawColor,line width= 0.4pt,dash pattern=on 1pt off 3pt ,line join=round,line cap=round] ( 18.60, 87.17) -- ( 99.98, 87.17);

\path[draw=drawColor,line width= 0.4pt,dash pattern=on 1pt off 3pt ,line join=round,line cap=round] ( 42.02, 18.60) -- ( 42.02, 89.81);

\path[draw=drawColor,line width= 0.4pt,dash pattern=on 1pt off 3pt ,line join=round,line cap=round] ( 65.57, 18.60) -- ( 65.57, 89.81);

\path[draw=drawColor,line width= 0.4pt,dash pattern=on 1pt off 3pt ,line join=round,line cap=round] ( 89.12, 18.60) -- ( 89.12, 89.81);
\end{scope}
\begin{scope}
\path[clip] (  0.00,  0.00) rectangle (101.18,108.41);
\definecolor{drawColor}{RGB}{0,0,0}

\node[text=drawColor,anchor=base,inner sep=0pt, outer sep=0pt, scale=  1.00] at ( 59.29,  0.60) {\footnotesize$t$};

\node[text=drawColor,rotate= 90.00,anchor=base,inner sep=0pt, outer sep=0pt, scale=  1.00] at (  6.60, 54.20) {\footnotesize power};
\end{scope}
\begin{scope}
\path[clip] (  0.00,  0.00) rectangle (101.18,108.41);
\definecolor{drawColor}{RGB}{0,0,0}

\node[text=drawColor,anchor=base,inner sep=0pt, outer sep=0pt, scale=  1.00] at ( 59.29, 92.21) {\footnotesize $P= 3.52\times 10^{-6} $};
\end{scope}
\begin{scope}
\path[clip] ( 18.60, 18.60) rectangle ( 99.98, 89.81);
\definecolor{drawColor}{RGB}{97,208,79}

\path[draw=drawColor,line width= 0.4pt,line join=round,line cap=round] ( 21.61, 74.31) --
	( 23.97, 71.27) --
	( 26.32, 68.03) --
	( 28.68, 64.09) --
	( 31.03, 60.91) --
	( 33.39, 56.42) --
	( 35.74, 52.83) --
	( 38.10, 48.06) --
	( 40.45, 44.46) --
	( 42.81, 40.03) --
	( 45.16, 36.55) --
	( 47.52, 33.16) --
	( 49.87, 30.45) --
	( 52.22, 28.17) --
	( 54.58, 26.21) --
	( 56.93, 24.95) --
	( 59.29, 24.64) --
	( 61.64, 24.82) --
	( 64.00, 25.48) --
	( 66.35, 27.28) --
	( 68.71, 29.85) --
	( 71.06, 33.46) --
	( 73.42, 38.32) --
	( 75.77, 44.76) --
	( 78.13, 51.60) --
	( 80.48, 59.41) --
	( 82.84, 68.21) --
	( 85.19, 75.55) --
	( 87.55, 81.79) --
	( 89.90, 85.02) --
	( 92.25, 86.71) --
	( 94.61, 87.11) --
	( 96.96, 87.16);
\definecolor{drawColor}{RGB}{34,151,230}

\path[draw=drawColor,line width= 0.4pt,line join=round,line cap=round] ( 21.61, 83.85) --
	( 23.97, 82.91) --
	( 26.32, 81.78) --
	( 28.68, 80.59) --
	( 31.03, 78.50) --
	( 33.39, 76.28) --
	( 35.74, 73.28) --
	( 38.10, 69.84) --
	( 40.45, 67.14) --
	( 42.81, 62.94) --
	( 45.16, 58.69) --
	( 47.52, 53.58) --
	( 49.87, 48.80) --
	( 52.22, 43.32) --
	( 54.58, 38.33) --
	( 56.93, 33.93) --
	( 59.29, 29.71) --
	( 61.64, 26.68) --
	( 64.00, 24.61) --
	( 66.35, 23.29) --
	( 68.71, 22.96) --
	( 71.06, 23.25) --
	( 73.42, 24.26) --
	( 75.77, 26.43) --
	( 78.13, 30.23) --
	( 80.48, 34.87) --
	( 82.84, 44.15) --
	( 85.19, 56.89) --
	( 87.55, 70.84) --
	( 89.90, 81.14) --
	( 92.25, 86.05) --
	( 94.61, 87.03) --
	( 96.96, 87.16);
\definecolor{drawColor}{RGB}{40,226,229}

\path[draw=drawColor,line width= 0.4pt,line join=round,line cap=round] ( 21.61, 80.80) --
	( 23.97, 79.24) --
	( 26.32, 77.86) --
	( 28.68, 76.66) --
	( 31.03, 74.34) --
	( 33.39, 72.18) --
	( 35.74, 69.68) --
	( 38.10, 66.29) --
	( 40.45, 63.64) --
	( 42.81, 60.31) --
	( 45.16, 56.56) --
	( 47.52, 52.14) --
	( 49.87, 48.36) --
	( 52.22, 43.26) --
	( 54.58, 38.64) --
	( 56.93, 35.27) --
	( 59.29, 31.26) --
	( 61.64, 28.68) --
	( 64.00, 26.91) --
	( 66.35, 26.20) --
	( 68.71, 26.12) --
	( 71.06, 26.83) --
	( 73.42, 28.39) --
	( 75.77, 31.14) --
	( 78.13, 35.21) --
	( 80.48, 39.80) --
	( 82.84, 47.88) --
	( 85.19, 59.07) --
	( 87.55, 71.67) --
	( 89.90, 81.05) --
	( 92.25, 85.98) --
	( 94.61, 87.02) --
	( 96.96, 87.16);
\definecolor{drawColor}{RGB}{0,0,0}

\path[draw=drawColor,line width= 0.4pt,dash pattern=on 4pt off 4pt ,line join=round,line cap=round] ( 18.60, 24.53) -- ( 99.98, 24.53);

\path[draw=drawColor,line width= 0.4pt,dash pattern=on 4pt off 4pt ,line join=round,line cap=round] ( 59.45, 18.60) -- ( 59.45, 89.81);

\node[text=drawColor,rotate= 90.00,anchor=base,inner sep=0pt, outer sep=0pt, scale=  1.00] at ( 66.65, 70.69) {\tiny$\theta = -1.3 $};
\end{scope}
\end{tikzpicture}
% Created by tikzDevice version 0.12.6 on 2025-09-10 23:29:27
% !TEX encoding = UTF-8 Unicode
\begin{tikzpicture}[x=1pt,y=1pt]
\definecolor{fillColor}{RGB}{255,255,255}
\path[use as bounding box,fill=fillColor,fill opacity=0.00] (0,0) rectangle (101.18,108.41);
\begin{scope}
\path[clip] (  0.00,  0.00) rectangle (101.18,108.41);
\definecolor{drawColor}{RGB}{0,0,0}

\path[draw=drawColor,line width= 0.4pt,line join=round,line cap=round] ( 29.46, 18.60) -- ( 76.56, 18.60);

\path[draw=drawColor,line width= 0.4pt,line join=round,line cap=round] ( 29.46, 18.60) -- ( 29.46, 17.40);

\path[draw=drawColor,line width= 0.4pt,line join=round,line cap=round] ( 53.01, 18.60) -- ( 53.01, 17.40);

\path[draw=drawColor,line width= 0.4pt,line join=round,line cap=round] ( 76.56, 18.60) -- ( 76.56, 17.40);

\node[text=drawColor,anchor=base,inner sep=0pt, outer sep=0pt, scale=  0.80] at ( 29.46,  9.60) {-5};

\node[text=drawColor,anchor=base,inner sep=0pt, outer sep=0pt, scale=  0.80] at ( 53.01,  9.60) {0};

\node[text=drawColor,anchor=base,inner sep=0pt, outer sep=0pt, scale=  0.80] at ( 76.56,  9.60) {5};

\path[draw=drawColor,line width= 0.4pt,line join=round,line cap=round] ( 18.60, 21.24) -- ( 18.60, 87.17);

\path[draw=drawColor,line width= 0.4pt,line join=round,line cap=round] ( 18.60, 21.24) -- ( 17.40, 21.24);

\path[draw=drawColor,line width= 0.4pt,line join=round,line cap=round] ( 18.60, 34.42) -- ( 17.40, 34.42);

\path[draw=drawColor,line width= 0.4pt,line join=round,line cap=round] ( 18.60, 47.61) -- ( 17.40, 47.61);

\path[draw=drawColor,line width= 0.4pt,line join=round,line cap=round] ( 18.60, 60.80) -- ( 17.40, 60.80);

\path[draw=drawColor,line width= 0.4pt,line join=round,line cap=round] ( 18.60, 73.98) -- ( 17.40, 73.98);

\path[draw=drawColor,line width= 0.4pt,line join=round,line cap=round] ( 18.60, 87.17) -- ( 17.40, 87.17);

\node[text=drawColor,rotate= 90.00,anchor=base,inner sep=0pt, outer sep=0pt, scale=  0.80] at ( 16.80, 21.24) {0.0};

\node[text=drawColor,rotate= 90.00,anchor=base,inner sep=0pt, outer sep=0pt, scale=  0.80] at ( 16.80, 47.61) {0.4};

\node[text=drawColor,rotate= 90.00,anchor=base,inner sep=0pt, outer sep=0pt, scale=  0.80] at ( 16.80, 73.98) {0.8};
\end{scope}
\begin{scope}
\path[clip] ( 18.60, 18.60) rectangle ( 99.98, 89.81);
\definecolor{drawColor}{RGB}{211,211,211}

\path[draw=drawColor,line width= 0.4pt,dash pattern=on 1pt off 3pt ,line join=round,line cap=round] ( 18.60, 21.24) -- ( 99.98, 21.24);

\path[draw=drawColor,line width= 0.4pt,dash pattern=on 1pt off 3pt ,line join=round,line cap=round] ( 18.60, 34.42) -- ( 99.98, 34.42);

\path[draw=drawColor,line width= 0.4pt,dash pattern=on 1pt off 3pt ,line join=round,line cap=round] ( 18.60, 47.61) -- ( 99.98, 47.61);

\path[draw=drawColor,line width= 0.4pt,dash pattern=on 1pt off 3pt ,line join=round,line cap=round] ( 18.60, 60.80) -- ( 99.98, 60.80);

\path[draw=drawColor,line width= 0.4pt,dash pattern=on 1pt off 3pt ,line join=round,line cap=round] ( 18.60, 73.98) -- ( 99.98, 73.98);

\path[draw=drawColor,line width= 0.4pt,dash pattern=on 1pt off 3pt ,line join=round,line cap=round] ( 18.60, 87.17) -- ( 99.98, 87.17);

\path[draw=drawColor,line width= 0.4pt,dash pattern=on 1pt off 3pt ,line join=round,line cap=round] ( 29.46, 18.60) -- ( 29.46, 89.81);

\path[draw=drawColor,line width= 0.4pt,dash pattern=on 1pt off 3pt ,line join=round,line cap=round] ( 53.01, 18.60) -- ( 53.01, 89.81);

\path[draw=drawColor,line width= 0.4pt,dash pattern=on 1pt off 3pt ,line join=round,line cap=round] ( 76.56, 18.60) -- ( 76.56, 89.81);
\end{scope}
\begin{scope}
\path[clip] (  0.00,  0.00) rectangle (101.18,108.41);
\definecolor{drawColor}{RGB}{0,0,0}

\node[text=drawColor,anchor=base,inner sep=0pt, outer sep=0pt, scale=  1.00] at ( 59.29,  0.60) {\footnotesize$t$};

\node[text=drawColor,rotate= 90.00,anchor=base,inner sep=0pt, outer sep=0pt, scale=  1.00] at (  6.60, 54.20) {\footnotesize power};
\end{scope}
\begin{scope}
\path[clip] (  0.00,  0.00) rectangle (101.18,108.41);
\definecolor{drawColor}{RGB}{0,0,0}

\node[text=drawColor,anchor=base,inner sep=0pt, outer sep=0pt, scale=  1.00] at ( 59.29, 92.21) {\footnotesize $P= 0.0148 $};
\end{scope}
\begin{scope}
\path[clip] ( 18.60, 18.60) rectangle ( 99.98, 89.81);
\definecolor{drawColor}{RGB}{97,208,79}

\path[draw=drawColor,line width= 0.4pt,line join=round,line cap=round] ( 21.61, 79.91) --
	( 23.97, 78.13) --
	( 26.32, 75.79) --
	( 28.68, 72.91) --
	( 31.03, 69.53) --
	( 33.39, 66.18) --
	( 35.74, 61.86) --
	( 38.10, 56.73) --
	( 40.45, 52.11) --
	( 42.81, 46.82) --
	( 45.16, 42.44) --
	( 47.52, 37.85) --
	( 49.87, 33.05) --
	( 52.22, 29.95) --
	( 54.58, 26.97) --
	( 56.93, 25.42) --
	( 59.29, 24.60) --
	( 61.64, 24.64) --
	( 64.00, 26.18) --
	( 66.35, 29.10) --
	( 68.71, 34.16) --
	( 71.06, 40.02) --
	( 73.42, 49.98) --
	( 75.77, 59.95) --
	( 78.13, 71.16) --
	( 80.48, 79.80) --
	( 82.84, 84.64) --
	( 85.19, 86.61) --
	( 87.55, 87.06) --
	( 89.90, 87.16) --
	( 92.25, 87.17) --
	( 94.61, 87.17) --
	( 96.96, 87.17);
\definecolor{drawColor}{RGB}{34,151,230}

\path[draw=drawColor,line width= 0.4pt,line join=round,line cap=round] ( 21.61, 85.29) --
	( 23.97, 84.51) --
	( 26.32, 83.86) --
	( 28.68, 82.86) --
	( 31.03, 81.56) --
	( 33.39, 79.23) --
	( 35.74, 77.01) --
	( 38.10, 73.26) --
	( 40.45, 69.42) --
	( 42.81, 64.75) --
	( 45.16, 59.29) --
	( 47.52, 53.60) --
	( 49.87, 46.75) --
	( 52.22, 40.38) --
	( 54.58, 34.44) --
	( 56.93, 29.49) --
	( 59.29, 25.85) --
	( 61.64, 23.42) --
	( 64.00, 22.73) --
	( 66.35, 22.94) --
	( 68.71, 24.29) --
	( 71.06, 27.39) --
	( 73.42, 34.35) --
	( 75.77, 45.62) --
	( 78.13, 61.42) --
	( 80.48, 75.93) --
	( 82.84, 83.79) --
	( 85.19, 86.50) --
	( 87.55, 87.06) --
	( 89.90, 87.16) --
	( 92.25, 87.17) --
	( 94.61, 87.17) --
	( 96.96, 87.17);
\definecolor{drawColor}{RGB}{40,226,229}

\path[draw=drawColor,line width= 0.4pt,line join=round,line cap=round] ( 21.61, 83.73) --
	( 23.97, 82.85) --
	( 26.32, 82.07) --
	( 28.68, 80.84) --
	( 31.03, 79.55) --
	( 33.39, 77.11) --
	( 35.74, 75.05) --
	( 38.10, 71.77) --
	( 40.45, 68.13) --
	( 42.81, 63.72) --
	( 45.16, 58.95) --
	( 47.52, 53.77) --
	( 49.87, 47.02) --
	( 52.22, 41.06) --
	( 54.58, 35.51) --
	( 56.93, 30.88) --
	( 59.29, 26.99) --
	( 61.64, 24.61) --
	( 64.00, 24.22) --
	( 66.35, 24.67) --
	( 68.71, 26.38) --
	( 71.06, 29.70) --
	( 73.42, 36.41) --
	( 75.77, 46.71) --
	( 78.13, 61.75) --
	( 80.48, 75.90) --
	( 82.84, 83.81) --
	( 85.19, 86.46) --
	( 87.55, 87.06) --
	( 89.90, 87.16) --
	( 92.25, 87.17) --
	( 94.61, 87.17) --
	( 96.96, 87.17);
\definecolor{drawColor}{RGB}{0,0,0}

\path[draw=drawColor,line width= 0.4pt,dash pattern=on 4pt off 4pt ,line join=round,line cap=round] ( 18.60, 24.53) -- ( 99.98, 24.53);

\path[draw=drawColor,line width= 0.4pt,dash pattern=on 4pt off 4pt ,line join=round,line cap=round] ( 59.13, 18.60) -- ( 59.13, 89.81);

\node[text=drawColor,rotate= 90.00,anchor=base,inner sep=0pt, outer sep=0pt, scale=  1.00] at ( 66.34, 70.69) {\tiny$\theta = 1.3 $};
\end{scope}
\end{tikzpicture}
% Created by tikzDevice version 0.12.6 on 2025-09-10 23:29:27
% !TEX encoding = UTF-8 Unicode
\begin{tikzpicture}[x=1pt,y=1pt]
\definecolor{fillColor}{RGB}{255,255,255}
\path[use as bounding box,fill=fillColor,fill opacity=0.00] (0,0) rectangle (101.18,108.41);
\begin{scope}
\path[clip] (  0.00,  0.00) rectangle (101.18,108.41);
\definecolor{drawColor}{RGB}{0,0,0}

\path[draw=drawColor,line width= 0.4pt,line join=round,line cap=round] ( 40.45, 18.60) -- ( 87.55, 18.60);

\path[draw=drawColor,line width= 0.4pt,line join=round,line cap=round] ( 40.45, 18.60) -- ( 40.45, 17.40);

\path[draw=drawColor,line width= 0.4pt,line join=round,line cap=round] ( 64.00, 18.60) -- ( 64.00, 17.40);

\path[draw=drawColor,line width= 0.4pt,line join=round,line cap=round] ( 87.55, 18.60) -- ( 87.55, 17.40);

\node[text=drawColor,anchor=base,inner sep=0pt, outer sep=0pt, scale=  0.80] at ( 40.45,  9.60) {0};

\node[text=drawColor,anchor=base,inner sep=0pt, outer sep=0pt, scale=  0.80] at ( 64.00,  9.60) {5};

\node[text=drawColor,anchor=base,inner sep=0pt, outer sep=0pt, scale=  0.80] at ( 87.55,  9.60) {10};

\path[draw=drawColor,line width= 0.4pt,line join=round,line cap=round] ( 18.60, 21.24) -- ( 18.60, 87.17);

\path[draw=drawColor,line width= 0.4pt,line join=round,line cap=round] ( 18.60, 21.24) -- ( 17.40, 21.24);

\path[draw=drawColor,line width= 0.4pt,line join=round,line cap=round] ( 18.60, 34.42) -- ( 17.40, 34.42);

\path[draw=drawColor,line width= 0.4pt,line join=round,line cap=round] ( 18.60, 47.61) -- ( 17.40, 47.61);

\path[draw=drawColor,line width= 0.4pt,line join=round,line cap=round] ( 18.60, 60.80) -- ( 17.40, 60.80);

\path[draw=drawColor,line width= 0.4pt,line join=round,line cap=round] ( 18.60, 73.98) -- ( 17.40, 73.98);

\path[draw=drawColor,line width= 0.4pt,line join=round,line cap=round] ( 18.60, 87.17) -- ( 17.40, 87.17);

\node[text=drawColor,rotate= 90.00,anchor=base,inner sep=0pt, outer sep=0pt, scale=  0.80] at ( 16.80, 21.24) {0.0};

\node[text=drawColor,rotate= 90.00,anchor=base,inner sep=0pt, outer sep=0pt, scale=  0.80] at ( 16.80, 47.61) {0.4};

\node[text=drawColor,rotate= 90.00,anchor=base,inner sep=0pt, outer sep=0pt, scale=  0.80] at ( 16.80, 73.98) {0.8};
\end{scope}
\begin{scope}
\path[clip] ( 18.60, 18.60) rectangle ( 99.98, 89.81);
\definecolor{drawColor}{RGB}{211,211,211}

\path[draw=drawColor,line width= 0.4pt,dash pattern=on 1pt off 3pt ,line join=round,line cap=round] ( 18.60, 21.24) -- ( 99.98, 21.24);

\path[draw=drawColor,line width= 0.4pt,dash pattern=on 1pt off 3pt ,line join=round,line cap=round] ( 18.60, 34.42) -- ( 99.98, 34.42);

\path[draw=drawColor,line width= 0.4pt,dash pattern=on 1pt off 3pt ,line join=round,line cap=round] ( 18.60, 47.61) -- ( 99.98, 47.61);

\path[draw=drawColor,line width= 0.4pt,dash pattern=on 1pt off 3pt ,line join=round,line cap=round] ( 18.60, 60.80) -- ( 99.98, 60.80);

\path[draw=drawColor,line width= 0.4pt,dash pattern=on 1pt off 3pt ,line join=round,line cap=round] ( 18.60, 73.98) -- ( 99.98, 73.98);

\path[draw=drawColor,line width= 0.4pt,dash pattern=on 1pt off 3pt ,line join=round,line cap=round] ( 18.60, 87.17) -- ( 99.98, 87.17);

\path[draw=drawColor,line width= 0.4pt,dash pattern=on 1pt off 3pt ,line join=round,line cap=round] ( 40.45, 18.60) -- ( 40.45, 89.81);

\path[draw=drawColor,line width= 0.4pt,dash pattern=on 1pt off 3pt ,line join=round,line cap=round] ( 64.00, 18.60) -- ( 64.00, 89.81);

\path[draw=drawColor,line width= 0.4pt,dash pattern=on 1pt off 3pt ,line join=round,line cap=round] ( 87.55, 18.60) -- ( 87.55, 89.81);
\end{scope}
\begin{scope}
\path[clip] (  0.00,  0.00) rectangle (101.18,108.41);
\definecolor{drawColor}{RGB}{0,0,0}

\node[text=drawColor,anchor=base,inner sep=0pt, outer sep=0pt, scale=  1.00] at ( 59.29,  0.60) {\footnotesize$t$};

\node[text=drawColor,rotate= 90.00,anchor=base,inner sep=0pt, outer sep=0pt, scale=  1.00] at (  6.60, 54.20) {\footnotesize power};
\end{scope}
\begin{scope}
\path[clip] (  0.00,  0.00) rectangle (101.18,108.41);
\definecolor{drawColor}{RGB}{0,0,0}

\node[text=drawColor,anchor=base,inner sep=0pt, outer sep=0pt, scale=  1.00] at ( 59.29, 92.21) {\footnotesize $P= 0.554 $};
\end{scope}
\begin{scope}
\path[clip] ( 18.60, 18.60) rectangle ( 99.98, 89.81);
\definecolor{drawColor}{RGB}{97,208,79}

\path[draw=drawColor,line width= 0.4pt,line join=round,line cap=round] ( 21.61, 85.35) --
	( 23.97, 84.48) --
	( 26.32, 83.77) --
	( 28.68, 82.12) --
	( 31.03, 80.30) --
	( 33.39, 77.79) --
	( 35.74, 75.14) --
	( 38.10, 70.98) --
	( 40.45, 66.40) --
	( 42.81, 60.47) --
	( 45.16, 54.27) --
	( 47.52, 47.35) --
	( 49.87, 41.00) --
	( 52.22, 34.67) --
	( 54.58, 29.74) --
	( 56.93, 25.62) --
	( 59.29, 24.59) --
	( 61.64, 25.35) --
	( 64.00, 28.51) --
	( 66.35, 35.10) --
	( 68.71, 45.61) --
	( 71.06, 58.24) --
	( 73.42, 71.20) --
	( 75.77, 80.05) --
	( 78.13, 84.85) --
	( 80.48, 86.57) --
	( 82.84, 87.05) --
	( 85.19, 87.15) --
	( 87.55, 87.16) --
	( 89.90, 87.17) --
	( 92.25, 87.17) --
	( 94.61, 87.17) --
	( 96.96, 87.17);
\definecolor{drawColor}{RGB}{34,151,230}

\path[draw=drawColor,line width= 0.4pt,line join=round,line cap=round] ( 21.61, 86.66) --
	( 23.97, 86.54) --
	( 26.32, 86.15) --
	( 28.68, 85.41) --
	( 31.03, 84.62) --
	( 33.39, 83.56) --
	( 35.74, 81.81) --
	( 38.10, 78.76) --
	( 40.45, 75.25) --
	( 42.81, 70.11) --
	( 45.16, 64.25) --
	( 47.52, 55.99) --
	( 49.87, 48.12) --
	( 52.22, 39.86) --
	( 54.58, 32.07) --
	( 56.93, 26.23) --
	( 59.29, 23.59) --
	( 61.64, 22.97) --
	( 64.00, 24.60) --
	( 66.35, 29.90) --
	( 68.71, 41.21) --
	( 71.06, 55.87) --
	( 73.42, 70.34) --
	( 75.77, 79.82) --
	( 78.13, 84.83) --
	( 80.48, 86.57) --
	( 82.84, 87.05) --
	( 85.19, 87.15) --
	( 87.55, 87.16) --
	( 89.90, 87.17) --
	( 92.25, 87.17) --
	( 94.61, 87.17) --
	( 96.96, 87.17);
\definecolor{drawColor}{RGB}{40,226,229}

\path[draw=drawColor,line width= 0.4pt,line join=round,line cap=round] ( 21.61, 86.36) --
	( 23.97, 86.05) --
	( 26.32, 85.67) --
	( 28.68, 84.95) --
	( 31.03, 84.12) --
	( 33.39, 83.17) --
	( 35.74, 81.31) --
	( 38.10, 78.78) --
	( 40.45, 75.30) --
	( 42.81, 70.84) --
	( 45.16, 64.90) --
	( 47.52, 56.81) --
	( 49.87, 48.91) --
	( 52.22, 40.50) --
	( 54.58, 32.58) --
	( 56.93, 26.70) --
	( 59.29, 24.04) --
	( 61.64, 23.50) --
	( 64.00, 25.09) --
	( 66.35, 30.16) --
	( 68.71, 41.20) --
	( 71.06, 55.85) --
	( 73.42, 70.30) --
	( 75.77, 79.82) --
	( 78.13, 84.83) --
	( 80.48, 86.57) --
	( 82.84, 87.05) --
	( 85.19, 87.15) --
	( 87.55, 87.16) --
	( 89.90, 87.17) --
	( 92.25, 87.17) --
	( 94.61, 87.17) --
	( 96.96, 87.17);
\definecolor{drawColor}{RGB}{0,0,0}

\path[draw=drawColor,line width= 0.4pt,dash pattern=on 4pt off 4pt ,line join=round,line cap=round] ( 18.60, 24.53) -- ( 99.98, 24.53);

\path[draw=drawColor,line width= 0.4pt,dash pattern=on 4pt off 4pt ,line join=round,line cap=round] ( 59.29, 18.60) -- ( 59.29, 89.81);

\node[text=drawColor,rotate= 90.00,anchor=base,inner sep=0pt, outer sep=0pt, scale=  1.00] at ( 66.50, 70.69) {\tiny$\theta = 4 $};
\end{scope}
\end{tikzpicture}
% Created by tikzDevice version 0.12.6 on 2025-09-10 23:29:27
% !TEX encoding = UTF-8 Unicode
\begin{tikzpicture}[x=1pt,y=1pt]
\definecolor{fillColor}{RGB}{255,255,255}
\path[use as bounding box,fill=fillColor,fill opacity=0.00] (0,0) rectangle (101.18,108.41);
\begin{scope}
\path[clip] (  0.00,  0.00) rectangle (101.18,108.41);
\definecolor{drawColor}{RGB}{0,0,0}

\path[draw=drawColor,line width= 0.4pt,line join=round,line cap=round] ( 21.61, 18.60) -- ( 92.25, 18.60);

\path[draw=drawColor,line width= 0.4pt,line join=round,line cap=round] ( 21.61, 18.60) -- ( 21.61, 17.40);

\path[draw=drawColor,line width= 0.4pt,line join=round,line cap=round] ( 45.16, 18.60) -- ( 45.16, 17.40);

\path[draw=drawColor,line width= 0.4pt,line join=round,line cap=round] ( 68.71, 18.60) -- ( 68.71, 17.40);

\path[draw=drawColor,line width= 0.4pt,line join=round,line cap=round] ( 92.25, 18.60) -- ( 92.25, 17.40);

\node[text=drawColor,anchor=base,inner sep=0pt, outer sep=0pt, scale=  0.80] at ( 21.61,  9.60) {-10};

\node[text=drawColor,anchor=base,inner sep=0pt, outer sep=0pt, scale=  0.80] at ( 45.16,  9.60) {-5};

\node[text=drawColor,anchor=base,inner sep=0pt, outer sep=0pt, scale=  0.80] at ( 68.71,  9.60) {0};

\node[text=drawColor,anchor=base,inner sep=0pt, outer sep=0pt, scale=  0.80] at ( 92.25,  9.60) {5};

\path[draw=drawColor,line width= 0.4pt,line join=round,line cap=round] ( 18.60, 21.24) -- ( 18.60, 87.17);

\path[draw=drawColor,line width= 0.4pt,line join=round,line cap=round] ( 18.60, 21.24) -- ( 17.40, 21.24);

\path[draw=drawColor,line width= 0.4pt,line join=round,line cap=round] ( 18.60, 34.42) -- ( 17.40, 34.42);

\path[draw=drawColor,line width= 0.4pt,line join=round,line cap=round] ( 18.60, 47.61) -- ( 17.40, 47.61);

\path[draw=drawColor,line width= 0.4pt,line join=round,line cap=round] ( 18.60, 60.80) -- ( 17.40, 60.80);

\path[draw=drawColor,line width= 0.4pt,line join=round,line cap=round] ( 18.60, 73.98) -- ( 17.40, 73.98);

\path[draw=drawColor,line width= 0.4pt,line join=round,line cap=round] ( 18.60, 87.17) -- ( 17.40, 87.17);

\node[text=drawColor,rotate= 90.00,anchor=base,inner sep=0pt, outer sep=0pt, scale=  0.80] at ( 16.80, 21.24) {0.0};

\node[text=drawColor,rotate= 90.00,anchor=base,inner sep=0pt, outer sep=0pt, scale=  0.80] at ( 16.80, 47.61) {0.4};

\node[text=drawColor,rotate= 90.00,anchor=base,inner sep=0pt, outer sep=0pt, scale=  0.80] at ( 16.80, 73.98) {0.8};
\end{scope}
\begin{scope}
\path[clip] ( 18.60, 18.60) rectangle ( 99.98, 89.81);
\definecolor{drawColor}{RGB}{211,211,211}

\path[draw=drawColor,line width= 0.4pt,dash pattern=on 1pt off 3pt ,line join=round,line cap=round] ( 18.60, 21.24) -- ( 99.98, 21.24);

\path[draw=drawColor,line width= 0.4pt,dash pattern=on 1pt off 3pt ,line join=round,line cap=round] ( 18.60, 34.42) -- ( 99.98, 34.42);

\path[draw=drawColor,line width= 0.4pt,dash pattern=on 1pt off 3pt ,line join=round,line cap=round] ( 18.60, 47.61) -- ( 99.98, 47.61);

\path[draw=drawColor,line width= 0.4pt,dash pattern=on 1pt off 3pt ,line join=round,line cap=round] ( 18.60, 60.80) -- ( 99.98, 60.80);

\path[draw=drawColor,line width= 0.4pt,dash pattern=on 1pt off 3pt ,line join=round,line cap=round] ( 18.60, 73.98) -- ( 99.98, 73.98);

\path[draw=drawColor,line width= 0.4pt,dash pattern=on 1pt off 3pt ,line join=round,line cap=round] ( 18.60, 87.17) -- ( 99.98, 87.17);

\path[draw=drawColor,line width= 0.4pt,dash pattern=on 1pt off 3pt ,line join=round,line cap=round] ( 21.61, 18.60) -- ( 21.61, 89.81);

\path[draw=drawColor,line width= 0.4pt,dash pattern=on 1pt off 3pt ,line join=round,line cap=round] ( 45.16, 18.60) -- ( 45.16, 89.81);

\path[draw=drawColor,line width= 0.4pt,dash pattern=on 1pt off 3pt ,line join=round,line cap=round] ( 68.71, 18.60) -- ( 68.71, 89.81);

\path[draw=drawColor,line width= 0.4pt,dash pattern=on 1pt off 3pt ,line join=round,line cap=round] ( 92.25, 18.60) -- ( 92.25, 89.81);
\end{scope}
\begin{scope}
\path[clip] (  0.00,  0.00) rectangle (101.18,108.41);
\definecolor{drawColor}{RGB}{0,0,0}

\node[text=drawColor,anchor=base,inner sep=0pt, outer sep=0pt, scale=  1.00] at ( 59.29,  0.60) {\footnotesize$t$};

\node[text=drawColor,rotate= 90.00,anchor=base,inner sep=0pt, outer sep=0pt, scale=  1.00] at (  6.60, 54.20) {\footnotesize power};
\end{scope}
\begin{scope}
\path[clip] (  0.00,  0.00) rectangle (101.18,108.41);
\definecolor{drawColor}{RGB}{0,0,0}

\node[text=drawColor,anchor=base,inner sep=0pt, outer sep=0pt, scale=  1.00] at ( 59.29, 92.21) {\footnotesize $P= 5.55\times 10^{-10} $};
\end{scope}
\begin{scope}
\path[clip] ( 18.60, 18.60) rectangle ( 99.98, 89.81);
\definecolor{drawColor}{RGB}{97,208,79}

\path[draw=drawColor,line width= 0.4pt,line join=round,line cap=round] ( 21.61, 73.43) --
	( 23.97, 70.28) --
	( 26.32, 66.87) --
	( 28.68, 63.70) --
	( 31.03, 59.72) --
	( 33.39, 54.97) --
	( 35.74, 50.71) --
	( 38.10, 46.21) --
	( 40.45, 42.27) --
	( 42.81, 38.41) --
	( 45.16, 35.67) --
	( 47.52, 32.37) --
	( 49.87, 29.49) --
	( 52.22, 27.62) --
	( 54.58, 25.94) --
	( 56.93, 24.72) --
	( 59.29, 24.49) --
	( 61.64, 24.77) --
	( 64.00, 25.34) --
	( 66.35, 27.18) --
	( 68.71, 28.96) --
	( 71.06, 31.76) --
	( 73.42, 35.67) --
	( 75.77, 40.98) --
	( 78.13, 46.92) --
	( 80.48, 52.59) --
	( 82.84, 59.16) --
	( 85.19, 66.22) --
	( 87.55, 72.80) --
	( 89.90, 78.19) --
	( 92.25, 82.32) --
	( 94.61, 85.23) --
	( 96.96, 86.49);
\definecolor{drawColor}{RGB}{34,151,230}

\path[draw=drawColor,line width= 0.4pt,line join=round,line cap=round] ( 21.61, 86.99) --
	( 23.97, 86.90) --
	( 26.32, 86.77) --
	( 28.68, 86.42) --
	( 31.03, 86.11) --
	( 33.39, 85.83) --
	( 35.74, 85.33) --
	( 38.10, 84.13) --
	( 40.45, 82.99) --
	( 42.81, 81.13) --
	( 45.16, 78.91) --
	( 47.52, 75.62) --
	( 49.87, 71.67) --
	( 52.22, 67.12) --
	( 54.58, 60.69) --
	( 56.93, 54.73) --
	( 59.29, 47.58) --
	( 61.64, 40.90) --
	( 64.00, 34.85) --
	( 66.35, 29.30) --
	( 68.71, 25.58) --
	( 71.06, 23.43) --
	( 73.42, 22.17) --
	( 75.77, 21.62) --
	( 78.13, 21.51) --
	( 80.48, 21.75) --
	( 82.84, 22.61) --
	( 85.19, 24.91) --
	( 87.55, 28.99) --
	( 89.90, 38.58) --
	( 92.25, 53.31) --
	( 94.61, 71.13) --
	( 96.96, 82.86);
\definecolor{drawColor}{RGB}{40,226,229}

\path[draw=drawColor,line width= 0.4pt,line join=round,line cap=round] ( 21.61, 82.53) --
	( 23.97, 81.66) --
	( 26.32, 80.65) --
	( 28.68, 79.51) --
	( 31.03, 77.75) --
	( 33.39, 75.79) --
	( 35.74, 74.12) --
	( 38.10, 71.47) --
	( 40.45, 69.00) --
	( 42.81, 65.10) --
	( 45.16, 62.23) --
	( 47.52, 58.02) --
	( 49.87, 54.34) --
	( 52.22, 50.19) --
	( 54.58, 44.81) --
	( 56.93, 41.14) --
	( 59.29, 36.63) --
	( 61.64, 33.16) --
	( 64.00, 30.20) --
	( 66.35, 27.24) --
	( 68.71, 26.56) --
	( 71.06, 25.94) --
	( 73.42, 26.54) --
	( 75.77, 27.48) --
	( 78.13, 29.25) --
	( 80.48, 31.66) --
	( 82.84, 35.10) --
	( 85.19, 40.78) --
	( 87.55, 47.46) --
	( 89.90, 56.46) --
	( 92.25, 67.50) --
	( 94.61, 77.75) --
	( 96.96, 84.58);
\definecolor{drawColor}{RGB}{0,0,0}

\path[draw=drawColor,line width= 0.4pt,dash pattern=on 4pt off 4pt ,line join=round,line cap=round] ( 18.60, 24.53) -- ( 99.98, 24.53);

\path[draw=drawColor,line width= 0.4pt,dash pattern=on 4pt off 4pt ,line join=round,line cap=round] ( 59.29, 18.60) -- ( 59.29, 89.81);

\node[text=drawColor,rotate= 90.00,anchor=base,inner sep=0pt, outer sep=0pt, scale=  1.00] at ( 66.50, 70.69) {\tiny$\theta = -2 $};
\end{scope}
\end{tikzpicture}
% Created by tikzDevice version 0.12.6 on 2025-09-10 23:29:27
% !TEX encoding = UTF-8 Unicode
\begin{tikzpicture}[x=1pt,y=1pt]
\definecolor{fillColor}{RGB}{255,255,255}
\path[use as bounding box,fill=fillColor,fill opacity=0.00] (0,0) rectangle (101.18,108.41);
\begin{scope}
\path[clip] (  0.00,  0.00) rectangle (101.18,108.41);
\definecolor{drawColor}{RGB}{0,0,0}

\path[draw=drawColor,line width= 0.4pt,line join=round,line cap=round] ( 35.74, 18.60) -- ( 82.84, 18.60);

\path[draw=drawColor,line width= 0.4pt,line join=round,line cap=round] ( 35.74, 18.60) -- ( 35.74, 17.40);

\path[draw=drawColor,line width= 0.4pt,line join=round,line cap=round] ( 59.29, 18.60) -- ( 59.29, 17.40);

\path[draw=drawColor,line width= 0.4pt,line join=round,line cap=round] ( 82.84, 18.60) -- ( 82.84, 17.40);

\node[text=drawColor,anchor=base,inner sep=0pt, outer sep=0pt, scale=  0.80] at ( 35.74,  9.60) {-5};

\node[text=drawColor,anchor=base,inner sep=0pt, outer sep=0pt, scale=  0.80] at ( 59.29,  9.60) {0};

\node[text=drawColor,anchor=base,inner sep=0pt, outer sep=0pt, scale=  0.80] at ( 82.84,  9.60) {5};

\path[draw=drawColor,line width= 0.4pt,line join=round,line cap=round] ( 18.60, 21.24) -- ( 18.60, 87.17);

\path[draw=drawColor,line width= 0.4pt,line join=round,line cap=round] ( 18.60, 21.24) -- ( 17.40, 21.24);

\path[draw=drawColor,line width= 0.4pt,line join=round,line cap=round] ( 18.60, 34.42) -- ( 17.40, 34.42);

\path[draw=drawColor,line width= 0.4pt,line join=round,line cap=round] ( 18.60, 47.61) -- ( 17.40, 47.61);

\path[draw=drawColor,line width= 0.4pt,line join=round,line cap=round] ( 18.60, 60.80) -- ( 17.40, 60.80);

\path[draw=drawColor,line width= 0.4pt,line join=round,line cap=round] ( 18.60, 73.98) -- ( 17.40, 73.98);

\path[draw=drawColor,line width= 0.4pt,line join=round,line cap=round] ( 18.60, 87.17) -- ( 17.40, 87.17);

\node[text=drawColor,rotate= 90.00,anchor=base,inner sep=0pt, outer sep=0pt, scale=  0.80] at ( 16.80, 21.24) {0.0};

\node[text=drawColor,rotate= 90.00,anchor=base,inner sep=0pt, outer sep=0pt, scale=  0.80] at ( 16.80, 47.61) {0.4};

\node[text=drawColor,rotate= 90.00,anchor=base,inner sep=0pt, outer sep=0pt, scale=  0.80] at ( 16.80, 73.98) {0.8};
\end{scope}
\begin{scope}
\path[clip] ( 18.60, 18.60) rectangle ( 99.98, 89.81);
\definecolor{drawColor}{RGB}{211,211,211}

\path[draw=drawColor,line width= 0.4pt,dash pattern=on 1pt off 3pt ,line join=round,line cap=round] ( 18.60, 21.24) -- ( 99.98, 21.24);

\path[draw=drawColor,line width= 0.4pt,dash pattern=on 1pt off 3pt ,line join=round,line cap=round] ( 18.60, 34.42) -- ( 99.98, 34.42);

\path[draw=drawColor,line width= 0.4pt,dash pattern=on 1pt off 3pt ,line join=round,line cap=round] ( 18.60, 47.61) -- ( 99.98, 47.61);

\path[draw=drawColor,line width= 0.4pt,dash pattern=on 1pt off 3pt ,line join=round,line cap=round] ( 18.60, 60.80) -- ( 99.98, 60.80);

\path[draw=drawColor,line width= 0.4pt,dash pattern=on 1pt off 3pt ,line join=round,line cap=round] ( 18.60, 73.98) -- ( 99.98, 73.98);

\path[draw=drawColor,line width= 0.4pt,dash pattern=on 1pt off 3pt ,line join=round,line cap=round] ( 18.60, 87.17) -- ( 99.98, 87.17);

\path[draw=drawColor,line width= 0.4pt,dash pattern=on 1pt off 3pt ,line join=round,line cap=round] ( 35.74, 18.60) -- ( 35.74, 89.81);

\path[draw=drawColor,line width= 0.4pt,dash pattern=on 1pt off 3pt ,line join=round,line cap=round] ( 59.29, 18.60) -- ( 59.29, 89.81);

\path[draw=drawColor,line width= 0.4pt,dash pattern=on 1pt off 3pt ,line join=round,line cap=round] ( 82.84, 18.60) -- ( 82.84, 89.81);
\end{scope}
\begin{scope}
\path[clip] (  0.00,  0.00) rectangle (101.18,108.41);
\definecolor{drawColor}{RGB}{0,0,0}

\node[text=drawColor,anchor=base,inner sep=0pt, outer sep=0pt, scale=  1.00] at ( 59.29,  0.60) {\footnotesize$t$};

\node[text=drawColor,rotate= 90.00,anchor=base,inner sep=0pt, outer sep=0pt, scale=  1.00] at (  6.60, 54.20) {\footnotesize power};
\end{scope}
\begin{scope}
\path[clip] (  0.00,  0.00) rectangle (101.18,108.41);
\definecolor{drawColor}{RGB}{0,0,0}

\node[text=drawColor,anchor=base,inner sep=0pt, outer sep=0pt, scale=  1.00] at ( 59.29, 92.21) {\footnotesize $P= 9.73\times 10^{-6} $};
\end{scope}
\begin{scope}
\path[clip] ( 18.60, 18.60) rectangle ( 99.98, 89.81);
\definecolor{drawColor}{RGB}{97,208,79}

\path[draw=drawColor,line width= 0.4pt,line join=round,line cap=round] ( 21.61, 76.47) --
	( 23.97, 73.58) --
	( 26.32, 70.70) --
	( 28.68, 67.07) --
	( 31.03, 63.02) --
	( 33.39, 59.11) --
	( 35.74, 54.24) --
	( 38.10, 50.29) --
	( 40.45, 45.47) --
	( 42.81, 41.10) --
	( 45.16, 37.50) --
	( 47.52, 33.80) --
	( 49.87, 30.41) --
	( 52.22, 28.19) --
	( 54.58, 26.31) --
	( 56.93, 25.18) --
	( 59.29, 24.72) --
	( 61.64, 24.80) --
	( 64.00, 25.35) --
	( 66.35, 27.36) --
	( 68.71, 30.50) --
	( 71.06, 34.89) --
	( 73.42, 39.06) --
	( 75.77, 45.67) --
	( 78.13, 52.56) --
	( 80.48, 61.81) --
	( 82.84, 69.32) --
	( 85.19, 76.90) --
	( 87.55, 82.18) --
	( 89.90, 85.59) --
	( 92.25, 86.80) --
	( 94.61, 87.13) --
	( 96.96, 87.17);
\definecolor{drawColor}{RGB}{34,151,230}

\path[draw=drawColor,line width= 0.4pt,line join=round,line cap=round] ( 21.61, 86.98) --
	( 23.97, 86.91) --
	( 26.32, 86.77) --
	( 28.68, 86.49) --
	( 31.03, 86.12) --
	( 33.39, 85.72) --
	( 35.74, 84.58) --
	( 38.10, 83.44) --
	( 40.45, 81.70) --
	( 42.81, 78.97) --
	( 45.16, 75.41) --
	( 47.52, 71.16) --
	( 49.87, 65.58) --
	( 52.22, 58.86) --
	( 54.58, 50.85) --
	( 56.93, 43.85) --
	( 59.29, 36.66) --
	( 61.64, 31.02) --
	( 64.00, 26.14) --
	( 66.35, 23.41) --
	( 68.71, 22.08) --
	( 71.06, 21.64) --
	( 73.42, 21.57) --
	( 75.77, 21.99) --
	( 78.13, 23.14) --
	( 80.48, 27.22) --
	( 82.84, 35.88) --
	( 85.19, 52.26) --
	( 87.55, 70.97) --
	( 89.90, 82.54) --
	( 92.25, 86.44) --
	( 94.61, 87.12) --
	( 96.96, 87.17);
\definecolor{drawColor}{RGB}{40,226,229}

\path[draw=drawColor,line width= 0.4pt,line join=round,line cap=round] ( 21.61, 83.21) --
	( 23.97, 81.73) --
	( 26.32, 80.83) --
	( 28.68, 79.64) --
	( 31.03, 77.79) --
	( 33.39, 75.81) --
	( 35.74, 73.35) --
	( 38.10, 70.81) --
	( 40.45, 67.72) --
	( 42.81, 64.03) --
	( 45.16, 59.57) --
	( 47.52, 54.80) --
	( 49.87, 50.51) --
	( 52.22, 45.49) --
	( 54.58, 40.30) --
	( 56.93, 35.87) --
	( 59.29, 31.68) --
	( 61.64, 28.46) --
	( 64.00, 25.96) --
	( 66.35, 25.14) --
	( 68.71, 25.25) --
	( 71.06, 26.38) --
	( 73.42, 28.11) --
	( 75.77, 30.69) --
	( 78.13, 34.88) --
	( 80.48, 42.10) --
	( 82.84, 51.61) --
	( 85.19, 63.96) --
	( 87.55, 76.41) --
	( 89.90, 83.94) --
	( 92.25, 86.60) --
	( 94.61, 87.13) --
	( 96.96, 87.17);
\definecolor{drawColor}{RGB}{0,0,0}

\path[draw=drawColor,line width= 0.4pt,dash pattern=on 4pt off 4pt ,line join=round,line cap=round] ( 18.60, 24.53) -- ( 99.98, 24.53);

\path[draw=drawColor,line width= 0.4pt,dash pattern=on 4pt off 4pt ,line join=round,line cap=round] ( 59.29, 18.60) -- ( 59.29, 89.81);

\node[text=drawColor,rotate= 90.00,anchor=base,inner sep=0pt, outer sep=0pt, scale=  1.00] at ( 66.50, 70.69) {\tiny$\theta = 0 $};
\end{scope}
\end{tikzpicture}
% Created by tikzDevice version 0.12.6 on 2025-09-10 23:29:28
% !TEX encoding = UTF-8 Unicode
\begin{tikzpicture}[x=1pt,y=1pt]
\definecolor{fillColor}{RGB}{255,255,255}
\path[use as bounding box,fill=fillColor,fill opacity=0.00] (0,0) rectangle (101.18,108.41);
\begin{scope}
\path[clip] (  0.00,  0.00) rectangle (101.18,108.41);
\definecolor{drawColor}{RGB}{0,0,0}

\path[draw=drawColor,line width= 0.4pt,line join=round,line cap=round] ( 26.32, 18.60) -- ( 96.96, 18.60);

\path[draw=drawColor,line width= 0.4pt,line join=round,line cap=round] ( 26.32, 18.60) -- ( 26.32, 17.40);

\path[draw=drawColor,line width= 0.4pt,line join=round,line cap=round] ( 49.87, 18.60) -- ( 49.87, 17.40);

\path[draw=drawColor,line width= 0.4pt,line join=round,line cap=round] ( 73.42, 18.60) -- ( 73.42, 17.40);

\path[draw=drawColor,line width= 0.4pt,line join=round,line cap=round] ( 96.96, 18.60) -- ( 96.96, 17.40);

\node[text=drawColor,anchor=base,inner sep=0pt, outer sep=0pt, scale=  0.80] at ( 26.32,  9.60) {-5};

\node[text=drawColor,anchor=base,inner sep=0pt, outer sep=0pt, scale=  0.80] at ( 49.87,  9.60) {0};

\node[text=drawColor,anchor=base,inner sep=0pt, outer sep=0pt, scale=  0.80] at ( 73.42,  9.60) {5};

\node[text=drawColor,anchor=base,inner sep=0pt, outer sep=0pt, scale=  0.80] at ( 96.96,  9.60) {10};

\path[draw=drawColor,line width= 0.4pt,line join=round,line cap=round] ( 18.60, 21.24) -- ( 18.60, 87.17);

\path[draw=drawColor,line width= 0.4pt,line join=round,line cap=round] ( 18.60, 21.24) -- ( 17.40, 21.24);

\path[draw=drawColor,line width= 0.4pt,line join=round,line cap=round] ( 18.60, 34.42) -- ( 17.40, 34.42);

\path[draw=drawColor,line width= 0.4pt,line join=round,line cap=round] ( 18.60, 47.61) -- ( 17.40, 47.61);

\path[draw=drawColor,line width= 0.4pt,line join=round,line cap=round] ( 18.60, 60.80) -- ( 17.40, 60.80);

\path[draw=drawColor,line width= 0.4pt,line join=round,line cap=round] ( 18.60, 73.98) -- ( 17.40, 73.98);

\path[draw=drawColor,line width= 0.4pt,line join=round,line cap=round] ( 18.60, 87.17) -- ( 17.40, 87.17);

\node[text=drawColor,rotate= 90.00,anchor=base,inner sep=0pt, outer sep=0pt, scale=  0.80] at ( 16.80, 21.24) {0.0};

\node[text=drawColor,rotate= 90.00,anchor=base,inner sep=0pt, outer sep=0pt, scale=  0.80] at ( 16.80, 47.61) {0.4};

\node[text=drawColor,rotate= 90.00,anchor=base,inner sep=0pt, outer sep=0pt, scale=  0.80] at ( 16.80, 73.98) {0.8};
\end{scope}
\begin{scope}
\path[clip] ( 18.60, 18.60) rectangle ( 99.98, 89.81);
\definecolor{drawColor}{RGB}{211,211,211}

\path[draw=drawColor,line width= 0.4pt,dash pattern=on 1pt off 3pt ,line join=round,line cap=round] ( 18.60, 21.24) -- ( 99.98, 21.24);

\path[draw=drawColor,line width= 0.4pt,dash pattern=on 1pt off 3pt ,line join=round,line cap=round] ( 18.60, 34.42) -- ( 99.98, 34.42);

\path[draw=drawColor,line width= 0.4pt,dash pattern=on 1pt off 3pt ,line join=round,line cap=round] ( 18.60, 47.61) -- ( 99.98, 47.61);

\path[draw=drawColor,line width= 0.4pt,dash pattern=on 1pt off 3pt ,line join=round,line cap=round] ( 18.60, 60.80) -- ( 99.98, 60.80);

\path[draw=drawColor,line width= 0.4pt,dash pattern=on 1pt off 3pt ,line join=round,line cap=round] ( 18.60, 73.98) -- ( 99.98, 73.98);

\path[draw=drawColor,line width= 0.4pt,dash pattern=on 1pt off 3pt ,line join=round,line cap=round] ( 18.60, 87.17) -- ( 99.98, 87.17);

\path[draw=drawColor,line width= 0.4pt,dash pattern=on 1pt off 3pt ,line join=round,line cap=round] ( 26.32, 18.60) -- ( 26.32, 89.81);

\path[draw=drawColor,line width= 0.4pt,dash pattern=on 1pt off 3pt ,line join=round,line cap=round] ( 49.87, 18.60) -- ( 49.87, 89.81);

\path[draw=drawColor,line width= 0.4pt,dash pattern=on 1pt off 3pt ,line join=round,line cap=round] ( 73.42, 18.60) -- ( 73.42, 89.81);

\path[draw=drawColor,line width= 0.4pt,dash pattern=on 1pt off 3pt ,line join=round,line cap=round] ( 96.96, 18.60) -- ( 96.96, 89.81);
\end{scope}
\begin{scope}
\path[clip] (  0.00,  0.00) rectangle (101.18,108.41);
\definecolor{drawColor}{RGB}{0,0,0}

\node[text=drawColor,anchor=base,inner sep=0pt, outer sep=0pt, scale=  1.00] at ( 59.29,  0.60) {\footnotesize$t$};

\node[text=drawColor,rotate= 90.00,anchor=base,inner sep=0pt, outer sep=0pt, scale=  1.00] at (  6.60, 54.20) {\footnotesize power};
\end{scope}
\begin{scope}
\path[clip] (  0.00,  0.00) rectangle (101.18,108.41);
\definecolor{drawColor}{RGB}{0,0,0}

\node[text=drawColor,anchor=base,inner sep=0pt, outer sep=0pt, scale=  1.00] at ( 59.29, 92.21) {\footnotesize $P= 0.00667 $};
\end{scope}
\begin{scope}
\path[clip] ( 18.60, 18.60) rectangle ( 99.98, 89.81);
\definecolor{drawColor}{RGB}{97,208,79}

\path[draw=drawColor,line width= 0.4pt,line join=round,line cap=round] ( 21.61, 80.33) --
	( 23.97, 77.81) --
	( 26.32, 74.94) --
	( 28.68, 71.83) --
	( 31.03, 68.08) --
	( 33.39, 64.96) --
	( 35.74, 60.38) --
	( 38.10, 55.59) --
	( 40.45, 50.61) --
	( 42.81, 46.08) --
	( 45.16, 40.57) --
	( 47.52, 36.64) --
	( 49.87, 32.45) --
	( 52.22, 29.34) --
	( 54.58, 26.83) --
	( 56.93, 25.38) --
	( 59.29, 24.61) --
	( 61.64, 24.73) --
	( 64.00, 26.03) --
	( 66.35, 28.44) --
	( 68.71, 32.50) --
	( 71.06, 38.77) --
	( 73.42, 46.43) --
	( 75.77, 55.85) --
	( 78.13, 66.80) --
	( 80.48, 76.28) --
	( 82.84, 82.99) --
	( 85.19, 85.90) --
	( 87.55, 87.00) --
	( 89.90, 87.15) --
	( 92.25, 87.17) --
	( 94.61, 87.17) --
	( 96.96, 87.17);
\definecolor{drawColor}{RGB}{34,151,230}

\path[draw=drawColor,line width= 0.4pt,line join=round,line cap=round] ( 21.61, 87.04) --
	( 23.97, 86.90) --
	( 26.32, 86.77) --
	( 28.68, 86.60) --
	( 31.03, 86.01) --
	( 33.39, 85.35) --
	( 35.74, 84.13) --
	( 38.10, 82.21) --
	( 40.45, 79.92) --
	( 42.81, 75.68) --
	( 45.16, 70.80) --
	( 47.52, 65.04) --
	( 49.87, 57.39) --
	( 52.22, 49.02) --
	( 54.58, 40.80) --
	( 56.93, 33.49) --
	( 59.29, 28.19) --
	( 61.64, 24.63) --
	( 64.00, 22.48) --
	( 66.35, 21.76) --
	( 68.71, 21.65) --
	( 71.06, 22.38) --
	( 73.42, 25.11) --
	( 75.77, 32.83) --
	( 78.13, 47.99) --
	( 80.48, 67.34) --
	( 82.84, 80.34) --
	( 85.19, 85.55) --
	( 87.55, 86.96) --
	( 89.90, 87.15) --
	( 92.25, 87.17) --
	( 94.61, 87.17) --
	( 96.96, 87.17);
\definecolor{drawColor}{RGB}{40,226,229}

\path[draw=drawColor,line width= 0.4pt,line join=round,line cap=round] ( 21.61, 84.04) --
	( 23.97, 82.82) --
	( 26.32, 81.79) --
	( 28.68, 80.36) --
	( 31.03, 78.90) --
	( 33.39, 76.57) --
	( 35.74, 74.54) --
	( 38.10, 71.23) --
	( 40.45, 67.30) --
	( 42.81, 63.19) --
	( 45.16, 57.55) --
	( 47.52, 53.13) --
	( 49.87, 47.09) --
	( 52.22, 40.73) --
	( 54.58, 35.37) --
	( 56.93, 30.69) --
	( 59.29, 27.40) --
	( 61.64, 25.47) --
	( 64.00, 24.36) --
	( 66.35, 25.29) --
	( 68.71, 26.60) --
	( 71.06, 29.49) --
	( 73.42, 35.00) --
	( 75.77, 43.65) --
	( 78.13, 56.88) --
	( 80.48, 71.17) --
	( 82.84, 81.33) --
	( 85.19, 85.72) --
	( 87.55, 86.98) --
	( 89.90, 87.15) --
	( 92.25, 87.17) --
	( 94.61, 87.17) --
	( 96.96, 87.17);
\definecolor{drawColor}{RGB}{0,0,0}

\path[draw=drawColor,line width= 0.4pt,dash pattern=on 4pt off 4pt ,line join=round,line cap=round] ( 18.60, 24.53) -- ( 99.98, 24.53);

\path[draw=drawColor,line width= 0.4pt,dash pattern=on 4pt off 4pt ,line join=round,line cap=round] ( 59.29, 18.60) -- ( 59.29, 89.81);

\node[text=drawColor,rotate= 90.00,anchor=base,inner sep=0pt, outer sep=0pt, scale=  1.00] at ( 66.50, 70.69) {\tiny$\theta = 2 $};
\end{scope}
\end{tikzpicture}
% Created by tikzDevice version 0.12.6 on 2025-09-10 23:29:28
% !TEX encoding = UTF-8 Unicode
\begin{tikzpicture}[x=1pt,y=1pt]
\definecolor{fillColor}{RGB}{255,255,255}
\path[use as bounding box,fill=fillColor,fill opacity=0.00] (0,0) rectangle (101.18,108.41);
\begin{scope}
\path[clip] (  0.00,  0.00) rectangle (101.18,108.41);
\definecolor{drawColor}{RGB}{0,0,0}

\path[draw=drawColor,line width= 0.4pt,line join=round,line cap=round] ( 40.45, 18.60) -- ( 87.55, 18.60);

\path[draw=drawColor,line width= 0.4pt,line join=round,line cap=round] ( 40.45, 18.60) -- ( 40.45, 17.40);

\path[draw=drawColor,line width= 0.4pt,line join=round,line cap=round] ( 64.00, 18.60) -- ( 64.00, 17.40);

\path[draw=drawColor,line width= 0.4pt,line join=round,line cap=round] ( 87.55, 18.60) -- ( 87.55, 17.40);

\node[text=drawColor,anchor=base,inner sep=0pt, outer sep=0pt, scale=  0.80] at ( 40.45,  9.60) {0};

\node[text=drawColor,anchor=base,inner sep=0pt, outer sep=0pt, scale=  0.80] at ( 64.00,  9.60) {5};

\node[text=drawColor,anchor=base,inner sep=0pt, outer sep=0pt, scale=  0.80] at ( 87.55,  9.60) {10};

\path[draw=drawColor,line width= 0.4pt,line join=round,line cap=round] ( 18.60, 21.24) -- ( 18.60, 87.17);

\path[draw=drawColor,line width= 0.4pt,line join=round,line cap=round] ( 18.60, 21.24) -- ( 17.40, 21.24);

\path[draw=drawColor,line width= 0.4pt,line join=round,line cap=round] ( 18.60, 34.42) -- ( 17.40, 34.42);

\path[draw=drawColor,line width= 0.4pt,line join=round,line cap=round] ( 18.60, 47.61) -- ( 17.40, 47.61);

\path[draw=drawColor,line width= 0.4pt,line join=round,line cap=round] ( 18.60, 60.80) -- ( 17.40, 60.80);

\path[draw=drawColor,line width= 0.4pt,line join=round,line cap=round] ( 18.60, 73.98) -- ( 17.40, 73.98);

\path[draw=drawColor,line width= 0.4pt,line join=round,line cap=round] ( 18.60, 87.17) -- ( 17.40, 87.17);

\node[text=drawColor,rotate= 90.00,anchor=base,inner sep=0pt, outer sep=0pt, scale=  0.80] at ( 16.80, 21.24) {0.0};

\node[text=drawColor,rotate= 90.00,anchor=base,inner sep=0pt, outer sep=0pt, scale=  0.80] at ( 16.80, 47.61) {0.4};

\node[text=drawColor,rotate= 90.00,anchor=base,inner sep=0pt, outer sep=0pt, scale=  0.80] at ( 16.80, 73.98) {0.8};
\end{scope}
\begin{scope}
\path[clip] ( 18.60, 18.60) rectangle ( 99.98, 89.81);
\definecolor{drawColor}{RGB}{211,211,211}

\path[draw=drawColor,line width= 0.4pt,dash pattern=on 1pt off 3pt ,line join=round,line cap=round] ( 18.60, 21.24) -- ( 99.98, 21.24);

\path[draw=drawColor,line width= 0.4pt,dash pattern=on 1pt off 3pt ,line join=round,line cap=round] ( 18.60, 34.42) -- ( 99.98, 34.42);

\path[draw=drawColor,line width= 0.4pt,dash pattern=on 1pt off 3pt ,line join=round,line cap=round] ( 18.60, 47.61) -- ( 99.98, 47.61);

\path[draw=drawColor,line width= 0.4pt,dash pattern=on 1pt off 3pt ,line join=round,line cap=round] ( 18.60, 60.80) -- ( 99.98, 60.80);

\path[draw=drawColor,line width= 0.4pt,dash pattern=on 1pt off 3pt ,line join=round,line cap=round] ( 18.60, 73.98) -- ( 99.98, 73.98);

\path[draw=drawColor,line width= 0.4pt,dash pattern=on 1pt off 3pt ,line join=round,line cap=round] ( 18.60, 87.17) -- ( 99.98, 87.17);

\path[draw=drawColor,line width= 0.4pt,dash pattern=on 1pt off 3pt ,line join=round,line cap=round] ( 40.45, 18.60) -- ( 40.45, 89.81);

\path[draw=drawColor,line width= 0.4pt,dash pattern=on 1pt off 3pt ,line join=round,line cap=round] ( 64.00, 18.60) -- ( 64.00, 89.81);

\path[draw=drawColor,line width= 0.4pt,dash pattern=on 1pt off 3pt ,line join=round,line cap=round] ( 87.55, 18.60) -- ( 87.55, 89.81);
\end{scope}
\begin{scope}
\path[clip] (  0.00,  0.00) rectangle (101.18,108.41);
\definecolor{drawColor}{RGB}{0,0,0}

\node[text=drawColor,anchor=base,inner sep=0pt, outer sep=0pt, scale=  1.00] at ( 59.29,  0.60) {\footnotesize$t$};

\node[text=drawColor,rotate= 90.00,anchor=base,inner sep=0pt, outer sep=0pt, scale=  1.00] at (  6.60, 54.20) {\footnotesize power};
\end{scope}
\begin{scope}
\path[clip] (  0.00,  0.00) rectangle (101.18,108.41);
\definecolor{drawColor}{RGB}{0,0,0}

\node[text=drawColor,anchor=base,inner sep=0pt, outer sep=0pt, scale=  1.00] at ( 59.29, 92.21) {\footnotesize $P= 0.237 $};
\end{scope}
\begin{scope}
\path[clip] ( 18.60, 18.60) rectangle ( 99.98, 89.81);
\definecolor{drawColor}{RGB}{97,208,79}

\path[draw=drawColor,line width= 0.4pt,line join=round,line cap=round] ( 21.61, 83.71) --
	( 23.97, 82.55) --
	( 26.32, 81.01) --
	( 28.68, 78.39) --
	( 31.03, 75.72) --
	( 33.39, 73.09) --
	( 35.74, 68.51) --
	( 38.10, 64.14) --
	( 40.45, 59.02) --
	( 42.81, 53.50) --
	( 45.16, 48.39) --
	( 47.52, 42.44) --
	( 49.87, 37.03) --
	( 52.22, 32.19) --
	( 54.58, 28.32) --
	( 56.93, 25.94) --
	( 59.29, 24.56) --
	( 61.64, 24.86) --
	( 64.00, 27.18) --
	( 66.35, 31.56) --
	( 68.71, 38.75) --
	( 71.06, 49.52) --
	( 73.42, 62.12) --
	( 75.77, 74.40) --
	( 78.13, 82.32) --
	( 80.48, 85.80) --
	( 82.84, 86.87) --
	( 85.19, 87.07) --
	( 87.55, 87.16) --
	( 89.90, 87.17) --
	( 92.25, 87.17) --
	( 94.61, 87.17) --
	( 96.96, 87.17);
\definecolor{drawColor}{RGB}{34,151,230}

\path[draw=drawColor,line width= 0.4pt,line join=round,line cap=round] ( 21.61, 87.05) --
	( 23.97, 87.04) --
	( 26.32, 86.82) --
	( 28.68, 86.44) --
	( 31.03, 85.93) --
	( 33.39, 85.23) --
	( 35.74, 83.42) --
	( 38.10, 81.52) --
	( 40.45, 77.89) --
	( 42.81, 73.19) --
	( 45.16, 66.82) --
	( 47.52, 59.20) --
	( 49.87, 50.60) --
	( 52.22, 41.89) --
	( 54.58, 33.68) --
	( 56.93, 28.30) --
	( 59.29, 24.18) --
	( 61.64, 22.46) --
	( 64.00, 21.98) --
	( 66.35, 23.16) --
	( 68.71, 28.23) --
	( 71.06, 39.43) --
	( 73.42, 56.23) --
	( 75.77, 72.47) --
	( 78.13, 82.02) --
	( 80.48, 85.80) --
	( 82.84, 86.86) --
	( 85.19, 87.07) --
	( 87.55, 87.16) --
	( 89.90, 87.17) --
	( 92.25, 87.17) --
	( 94.61, 87.17) --
	( 96.96, 87.17);
\definecolor{drawColor}{RGB}{40,226,229}

\path[draw=drawColor,line width= 0.4pt,line join=round,line cap=round] ( 21.61, 85.26) --
	( 23.97, 84.83) --
	( 26.32, 83.95) --
	( 28.68, 82.72) --
	( 31.03, 81.31) --
	( 33.39, 79.78) --
	( 35.74, 76.59) --
	( 38.10, 74.05) --
	( 40.45, 70.03) --
	( 42.81, 64.78) --
	( 45.16, 59.25) --
	( 47.52, 52.49) --
	( 49.87, 45.62) --
	( 52.22, 38.74) --
	( 54.58, 32.02) --
	( 56.93, 28.01) --
	( 59.29, 25.03) --
	( 61.64, 24.19) --
	( 64.00, 24.86) --
	( 66.35, 27.41) --
	( 68.71, 33.81) --
	( 71.06, 44.33) --
	( 73.42, 59.02) --
	( 75.77, 73.34) --
	( 78.13, 82.16) --
	( 80.48, 85.81) --
	( 82.84, 86.86) --
	( 85.19, 87.07) --
	( 87.55, 87.16) --
	( 89.90, 87.17) --
	( 92.25, 87.17) --
	( 94.61, 87.17) --
	( 96.96, 87.17);
\definecolor{drawColor}{RGB}{0,0,0}

\path[draw=drawColor,line width= 0.4pt,dash pattern=on 4pt off 4pt ,line join=round,line cap=round] ( 18.60, 24.53) -- ( 99.98, 24.53);

\path[draw=drawColor,line width= 0.4pt,dash pattern=on 4pt off 4pt ,line join=round,line cap=round] ( 59.29, 18.60) -- ( 59.29, 89.81);

\node[text=drawColor,rotate= 90.00,anchor=base,inner sep=0pt, outer sep=0pt, scale=  1.00] at ( 66.50, 70.69) {\tiny$\theta = 4 $};
\end{scope}
\end{tikzpicture}
\caption{Power curves $t \mapsto \Pr(Y \not\in [l(t,\hat{\bs\eta}), u(t,\hat{\bs\eta})]~|~\bs X \prec Y,\bs\eta,\theta)$ of the size-$5\%$ oracle procedure for testing $H:\theta=t$ and its empirical Bayes counterparts with $p = 50$ (oracle \protect\colorline{Rcol3}, Gaussian EB \protect\colorline{Rcol4}, nonparametric EB \protect\colorline{Rcol5}). In the top and middle rows $\bs\eta = (\eta_1,\ldots,\eta_p)$ is fixed as $\eta_j = s_0 \Phi^{-1}(\frac{j - 0.5}{p})$, $1\le j \le p$, i.e., as quantiles of $N(0,s_0^2)$ distribution with $s_0 = 0.5$ and $1.4$ respectively. In the bottom row, $\bs\eta$ is fixed as the quantiles of the Gaussian mixture $0.75 \cdot N(0,0.5^2) + 0.25 \cdot N(3,0.5^2)$. In each case, the value of $\theta$ is varied along the range of these $\eta_j$ values. We take $\sigma = \tau_1 = \cdots = \tau_p = 1$. Each figure is marked on the top with the corresponding $\theta$ value and the associated probability of the selection event.}
\label{fig:eb-power}
\end{figure}
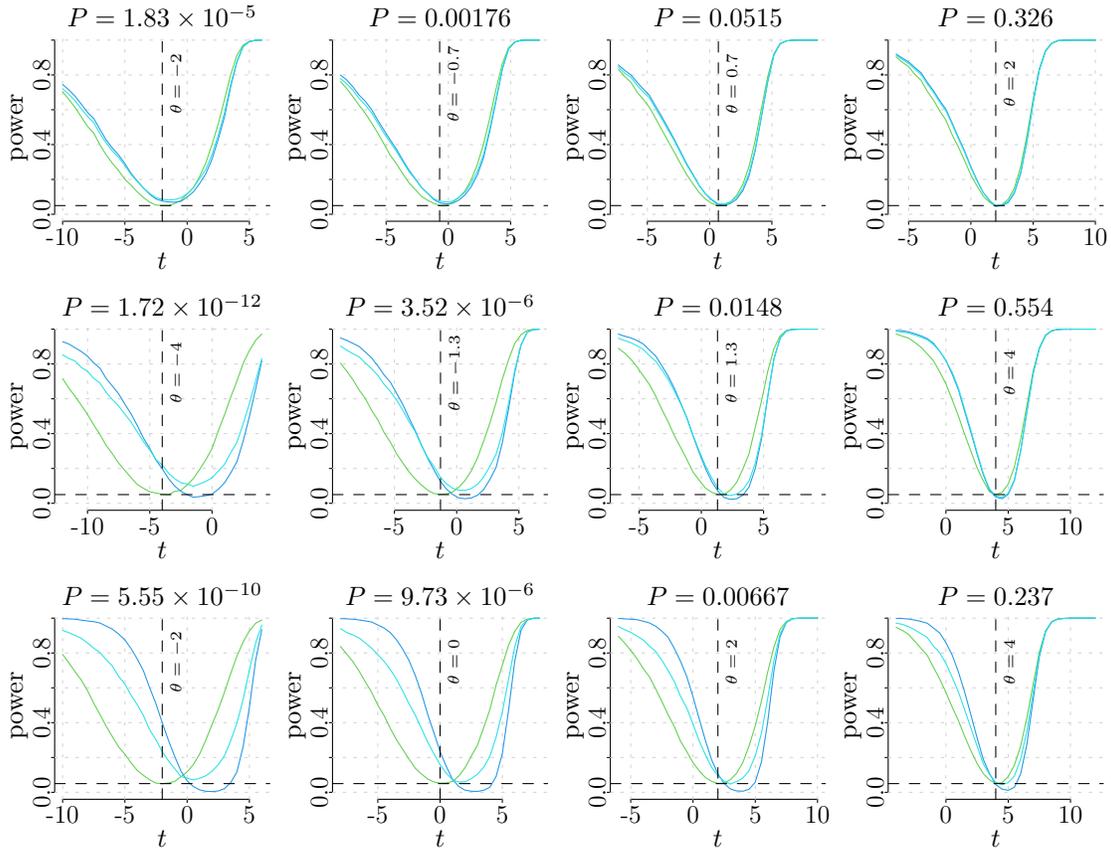

Figure \ref{fig:eb-power} shows that such a Gaussian empirical Bayesian strategy is able to approximate the power function of the oracle method using only $\bs X$ and without any serendipitously accurate prior information. The figure reports results derived from a numerical experiment with $p = 50$, $\sigma = \tau = 1$, and the true $\bs\eta$ fixed as the equispaced quantiles of the $N(0,v)$ distribution, i.e., $\eta_j = \sqrt v \Phi^{-1}(\frac{j - 0.5}{p})$, with $v$ chosen as either $0.5^2$ or $1.4^2$. In the first case, with $\rho = \frac{v}{\tau^2 + v} = 0.33$, the empirical Bayesian method (dark blue lines) closely matches the power function of the oracle (green lines) for a variety of $\theta$ values, including those that result in a small probability of the selection event. In the second case, with a larger $\rho \approx 0.58$, the match between the oracle and the empirical Bayesian method is poorer for $\theta$ values that are extremely unlikely to produce the winner, but it improves quickly as $\theta$ shifts to the right. The difference between these two situations could be anticipated from the heuristic arguments presented above: the smaller the value of $\rho$, the more beneficial empirical Bayesian shrinkage is. But even with a larger value of $\rho$, the empirical Bayesian method could still be effective for a wide range of $\theta$ values.

The same heuristics also suggest that further improvement could be achieved by considering nonparametric empirical Bayes estimates of $\bs\eta$. Consider an intermediate value $y_1$ such that $\eta_j > y_1$ only for indices $j$ in a subset $J \subset \{1,\ldots,p\}$, so that the magnitude of $R(y_1)$ is chiefly determined by the last sum in \eqref{approx} restricted to the same subset $J$. Consequently, a more appealing estimate for the associated $\bs\eta$ would be $\hat\eta_j = \rho^*X_j + (1 - \rho^*)m^*$ with $\rho^* = v^*/(\tau^2 + v^{*})$, where $N(m^*,v^{*})$ is an approximation to the empirical distribution of $\{\eta_i: i \in J\}$. In other words, adaptive localized shrinkage of $X_j$ values within smaller clusters could be more useful than one single global shrinkage. Such localized shrinkage can be obtained within an empirical Bayesian framework by nonparametrically estimating the distribution function $G$ of $\eta_1,\ldots,\eta_p$. Since we do not observe the $\eta_j$ directly, but only observe the corresponding noisy random variable $X_j$, estimation of $G$ falls in the category of mixing distribution estimation. %A popular approach is to estimate $G$ by the nonparametric maximum likelihood estimation method \citep[NPMLE][]{refs}, where the estimate $\hat G$ is discrete with at most $p$ many atoms. 
A computationally attractive approach, which allows estimation of a continuous mixing distribution $G$, is the predictive recursion algorithm of \cite{newton2002nonparametric}, subsequently refined by \cite{tokdar2009consistency} and \cite{martin2011semiparametric}; see Appendix \ref{app:C} for implementation details. We also experimented with estimating $G$ by the nonparametric maximum likelihood method, which produces a discrete estimate, but we omit those details here as the results were slightly inferior to those using  the predictive recursion estimation method. 

The bottom row of Figure \ref{fig:eb-power} presents a case where $\bs\eta$ is fixed at the equispaced quantiles of the Gaussian mixture $0.75 N(0,0.5^2) + 0.25 N(3,0.5^2)$, which has the same variance $v = 1.4^2$ as the Gaussian choice in the middle row. Here, the nonparametric estimator does better than the Gaussian empirical Bayesian method, especially in keeping the size of the test close to the nominal level $\alpha$. The power comparison at the alternative $\theta$ values is more ambiguous, with the Gaussian EB method dominating the nonparametric method for alternative values smaller than the true $\theta$ value, and the nonparametric method doing better on the other side. We will see next that this asymmetry appears to work in favor of the nonparametric method in terms of interval width.

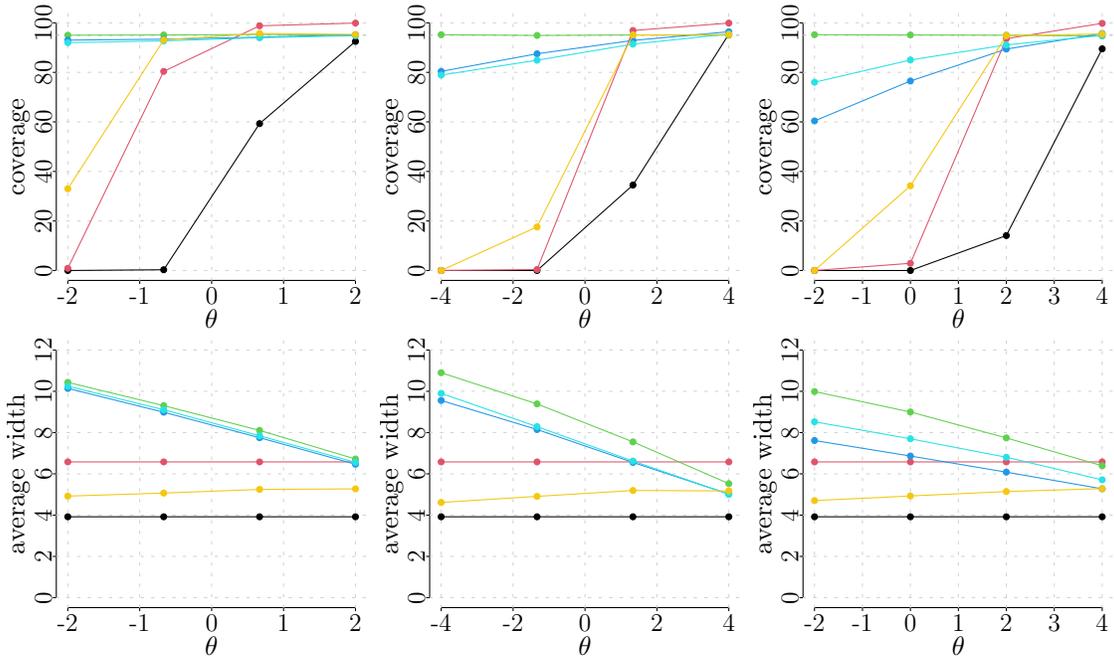
\begin{figure}[!t]
\centering
% Created by tikzDevice version 0.12.6 on 2025-09-10 22:33:25
% !TEX encoding = UTF-8 Unicode
\begin{tikzpicture}[x=1pt,y=1pt]
\definecolor{fillColor}{RGB}{255,255,255}
\path[use as bounding box,fill=fillColor,fill opacity=0.00] (0,0) rectangle (137.31,122.86);
\begin{scope}
\path[clip] (  0.00,  0.00) rectangle (137.31,122.86);
\definecolor{drawColor}{RGB}{0,0,0}

\path[draw=drawColor,line width= 0.4pt,line join=round,line cap=round] ( 22.95, 18.60) -- (131.76, 18.60);

\path[draw=drawColor,line width= 0.4pt,line join=round,line cap=round] ( 22.95, 18.60) -- ( 22.95, 17.40);

\path[draw=drawColor,line width= 0.4pt,line join=round,line cap=round] ( 50.15, 18.60) -- ( 50.15, 17.40);

\path[draw=drawColor,line width= 0.4pt,line join=round,line cap=round] ( 77.36, 18.60) -- ( 77.36, 17.40);

\path[draw=drawColor,line width= 0.4pt,line join=round,line cap=round] (104.56, 18.60) -- (104.56, 17.40);

\path[draw=drawColor,line width= 0.4pt,line join=round,line cap=round] (131.76, 18.60) -- (131.76, 17.40);

\node[text=drawColor,anchor=base,inner sep=0pt, outer sep=0pt, scale=  0.80] at ( 22.95,  9.60) {-2};

\node[text=drawColor,anchor=base,inner sep=0pt, outer sep=0pt, scale=  0.80] at ( 50.15,  9.60) {-1};

\node[text=drawColor,anchor=base,inner sep=0pt, outer sep=0pt, scale=  0.80] at ( 77.36,  9.60) {0};

\node[text=drawColor,anchor=base,inner sep=0pt, outer sep=0pt, scale=  0.80] at (104.56,  9.60) {1};

\node[text=drawColor,anchor=base,inner sep=0pt, outer sep=0pt, scale=  0.80] at (131.76,  9.60) {2};

\path[draw=drawColor,line width= 0.4pt,line join=round,line cap=round] ( 18.60, 22.35) -- ( 18.60,116.11);

\path[draw=drawColor,line width= 0.4pt,line join=round,line cap=round] ( 18.60, 22.35) -- ( 17.40, 22.35);

\path[draw=drawColor,line width= 0.4pt,line join=round,line cap=round] ( 18.60, 41.10) -- ( 17.40, 41.10);

\path[draw=drawColor,line width= 0.4pt,line join=round,line cap=round] ( 18.60, 59.85) -- ( 17.40, 59.85);

\path[draw=drawColor,line width= 0.4pt,line join=round,line cap=round] ( 18.60, 78.61) -- ( 17.40, 78.61);

\path[draw=drawColor,line width= 0.4pt,line join=round,line cap=round] ( 18.60, 97.36) -- ( 17.40, 97.36);

\path[draw=drawColor,line width= 0.4pt,line join=round,line cap=round] ( 18.60,116.11) -- ( 17.40,116.11);

\node[text=drawColor,rotate= 90.00,anchor=base,inner sep=0pt, outer sep=0pt, scale=  0.80] at ( 16.80, 22.35) {0};

\node[text=drawColor,rotate= 90.00,anchor=base,inner sep=0pt, outer sep=0pt, scale=  0.80] at ( 16.80, 41.10) {20};

\node[text=drawColor,rotate= 90.00,anchor=base,inner sep=0pt, outer sep=0pt, scale=  0.80] at ( 16.80, 59.85) {40};

\node[text=drawColor,rotate= 90.00,anchor=base,inner sep=0pt, outer sep=0pt, scale=  0.80] at ( 16.80, 78.61) {60};

\node[text=drawColor,rotate= 90.00,anchor=base,inner sep=0pt, outer sep=0pt, scale=  0.80] at ( 16.80, 97.36) {80};

\node[text=drawColor,rotate= 90.00,anchor=base,inner sep=0pt, outer sep=0pt, scale=  0.80] at ( 16.80,116.11) {100};
\end{scope}
\begin{scope}
\path[clip] ( 18.60, 18.60) rectangle (136.11,119.86);
\definecolor{drawColor}{RGB}{211,211,211}

\path[draw=drawColor,line width= 0.4pt,dash pattern=on 1pt off 3pt ,line join=round,line cap=round] ( 18.60, 22.35) -- (136.11, 22.35);

\path[draw=drawColor,line width= 0.4pt,dash pattern=on 1pt off 3pt ,line join=round,line cap=round] ( 18.60, 41.10) -- (136.11, 41.10);

\path[draw=drawColor,line width= 0.4pt,dash pattern=on 1pt off 3pt ,line join=round,line cap=round] ( 18.60, 59.85) -- (136.11, 59.85);

\path[draw=drawColor,line width= 0.4pt,dash pattern=on 1pt off 3pt ,line join=round,line cap=round] ( 18.60, 78.61) -- (136.11, 78.61);

\path[draw=drawColor,line width= 0.4pt,dash pattern=on 1pt off 3pt ,line join=round,line cap=round] ( 18.60, 97.36) -- (136.11, 97.36);

\path[draw=drawColor,line width= 0.4pt,dash pattern=on 1pt off 3pt ,line join=round,line cap=round] ( 18.60,116.11) -- (136.11,116.11);

\path[draw=drawColor,line width= 0.4pt,dash pattern=on 1pt off 3pt ,line join=round,line cap=round] ( 22.95, 18.60) -- ( 22.95,119.86);

\path[draw=drawColor,line width= 0.4pt,dash pattern=on 1pt off 3pt ,line join=round,line cap=round] ( 50.15, 18.60) -- ( 50.15,119.86);

\path[draw=drawColor,line width= 0.4pt,dash pattern=on 1pt off 3pt ,line join=round,line cap=round] ( 77.36, 18.60) -- ( 77.36,119.86);

\path[draw=drawColor,line width= 0.4pt,dash pattern=on 1pt off 3pt ,line join=round,line cap=round] (104.56, 18.60) -- (104.56,119.86);

\path[draw=drawColor,line width= 0.4pt,dash pattern=on 1pt off 3pt ,line join=round,line cap=round] (131.76, 18.60) -- (131.76,119.86);
\end{scope}
\begin{scope}
\path[clip] (  0.00,  0.00) rectangle (137.31,122.86);
\definecolor{drawColor}{RGB}{0,0,0}

\node[text=drawColor,anchor=base,inner sep=0pt, outer sep=0pt, scale=  1.00] at ( 77.36,  0.60) {\footnotesize$\theta$};

\node[text=drawColor,rotate= 90.00,anchor=base,inner sep=0pt, outer sep=0pt, scale=  1.00] at (  6.60, 69.23) {\footnotesize coverage};
\end{scope}
\begin{scope}
\path[clip] ( 18.60, 18.60) rectangle (136.11,119.86);
\definecolor{drawColor}{gray}{0.70}

\path[draw=drawColor,line width= 0.4pt,dash pattern=on 1pt off 3pt ,line join=round,line cap=round] ( 18.60,111.42) -- (136.11,111.42);
\definecolor{drawColor}{RGB}{0,0,0}

\path[draw=drawColor,line width= 0.4pt,line join=round,line cap=round] ( 22.95, 22.35) --
	( 59.22, 22.63) --
	( 95.49, 77.95) --
	(131.76,109.08);
\definecolor{fillColor}{RGB}{0,0,0}

\path[draw=drawColor,line width= 0.4pt,line join=round,line cap=round,fill=fillColor] ( 22.95, 22.35) circle (  1.12);

\path[draw=drawColor,line width= 0.4pt,line join=round,line cap=round,fill=fillColor] ( 59.22, 22.63) circle (  1.12);

\path[draw=drawColor,line width= 0.4pt,line join=round,line cap=round,fill=fillColor] ( 95.49, 77.95) circle (  1.12);

\path[draw=drawColor,line width= 0.4pt,line join=round,line cap=round,fill=fillColor] (131.76,109.08) circle (  1.12);
\definecolor{drawColor}{RGB}{223,83,107}

\path[draw=drawColor,line width= 0.4pt,line join=round,line cap=round] ( 22.95, 23.19) --
	( 59.22, 97.73) --
	( 95.49,114.98) --
	(131.76,116.01);
\definecolor{fillColor}{RGB}{223,83,107}

\path[draw=drawColor,line width= 0.4pt,line join=round,line cap=round,fill=fillColor] ( 22.95, 23.19) circle (  1.12);

\path[draw=drawColor,line width= 0.4pt,line join=round,line cap=round,fill=fillColor] ( 59.22, 97.73) circle (  1.12);

\path[draw=drawColor,line width= 0.4pt,line join=round,line cap=round,fill=fillColor] ( 95.49,114.98) circle (  1.12);

\path[draw=drawColor,line width= 0.4pt,line join=round,line cap=round,fill=fillColor] (131.76,116.01) circle (  1.12);
\definecolor{drawColor}{RGB}{97,208,79}

\path[draw=drawColor,line width= 0.4pt,line join=round,line cap=round] ( 22.95,111.42) --
	( 59.22,111.51) --
	( 95.49,111.70) --
	(131.76,111.51);
\definecolor{fillColor}{RGB}{97,208,79}

\path[draw=drawColor,line width= 0.4pt,line join=round,line cap=round,fill=fillColor] ( 22.95,111.42) circle (  1.12);

\path[draw=drawColor,line width= 0.4pt,line join=round,line cap=round,fill=fillColor] ( 59.22,111.51) circle (  1.12);

\path[draw=drawColor,line width= 0.4pt,line join=round,line cap=round,fill=fillColor] ( 95.49,111.70) circle (  1.12);

\path[draw=drawColor,line width= 0.4pt,line join=round,line cap=round,fill=fillColor] (131.76,111.51) circle (  1.12);
\definecolor{drawColor}{RGB}{34,151,230}

\path[draw=drawColor,line width= 0.4pt,line join=round,line cap=round] ( 22.95,109.55) --
	( 59.22,109.92) --
	( 95.49,110.67) --
	(131.76,111.61);
\definecolor{fillColor}{RGB}{34,151,230}

\path[draw=drawColor,line width= 0.4pt,line join=round,line cap=round,fill=fillColor] ( 22.95,109.55) circle (  1.12);

\path[draw=drawColor,line width= 0.4pt,line join=round,line cap=round,fill=fillColor] ( 59.22,109.92) circle (  1.12);

\path[draw=drawColor,line width= 0.4pt,line join=round,line cap=round,fill=fillColor] ( 95.49,110.67) circle (  1.12);

\path[draw=drawColor,line width= 0.4pt,line join=round,line cap=round,fill=fillColor] (131.76,111.61) circle (  1.12);
\definecolor{drawColor}{RGB}{40,226,229}

\path[draw=drawColor,line width= 0.4pt,line join=round,line cap=round] ( 22.95,108.61) --
	( 59.22,109.26) --
	( 95.49,110.48) --
	(131.76,111.23);
\definecolor{fillColor}{RGB}{40,226,229}

\path[draw=drawColor,line width= 0.4pt,line join=round,line cap=round,fill=fillColor] ( 22.95,108.61) circle (  1.12);

\path[draw=drawColor,line width= 0.4pt,line join=round,line cap=round,fill=fillColor] ( 59.22,109.26) circle (  1.12);

\path[draw=drawColor,line width= 0.4pt,line join=round,line cap=round,fill=fillColor] ( 95.49,110.48) circle (  1.12);

\path[draw=drawColor,line width= 0.4pt,line join=round,line cap=round,fill=fillColor] (131.76,111.23) circle (  1.12);
\definecolor{drawColor}{RGB}{245,199,16}

\path[draw=drawColor,line width= 0.4pt,line join=round,line cap=round] ( 22.95, 53.29) --
	( 59.22,109.64) --
	( 95.49,112.08) --
	(131.76,111.70);
\definecolor{fillColor}{RGB}{245,199,16}

\path[draw=drawColor,line width= 0.4pt,line join=round,line cap=round,fill=fillColor] ( 22.95, 53.29) circle (  1.12);

\path[draw=drawColor,line width= 0.4pt,line join=round,line cap=round,fill=fillColor] ( 59.22,109.64) circle (  1.12);

\path[draw=drawColor,line width= 0.4pt,line join=round,line cap=round,fill=fillColor] ( 95.49,112.08) circle (  1.12);

\path[draw=drawColor,line width= 0.4pt,line join=round,line cap=round,fill=fillColor] (131.76,111.70) circle (  1.12);
\end{scope}
\end{tikzpicture}
% Created by tikzDevice version 0.12.6 on 2025-09-10 22:33:26
% !TEX encoding = UTF-8 Unicode
\begin{tikzpicture}[x=1pt,y=1pt]
\definecolor{fillColor}{RGB}{255,255,255}
\path[use as bounding box,fill=fillColor,fill opacity=0.00] (0,0) rectangle (137.31,122.86);
\begin{scope}
\path[clip] (  0.00,  0.00) rectangle (137.31,122.86);
\definecolor{drawColor}{RGB}{0,0,0}

\path[draw=drawColor,line width= 0.4pt,line join=round,line cap=round] ( 22.95, 18.60) -- (131.76, 18.60);

\path[draw=drawColor,line width= 0.4pt,line join=round,line cap=round] ( 22.95, 18.60) -- ( 22.95, 17.40);

\path[draw=drawColor,line width= 0.4pt,line join=round,line cap=round] ( 50.15, 18.60) -- ( 50.15, 17.40);

\path[draw=drawColor,line width= 0.4pt,line join=round,line cap=round] ( 77.36, 18.60) -- ( 77.36, 17.40);

\path[draw=drawColor,line width= 0.4pt,line join=round,line cap=round] (104.56, 18.60) -- (104.56, 17.40);

\path[draw=drawColor,line width= 0.4pt,line join=round,line cap=round] (131.76, 18.60) -- (131.76, 17.40);

\node[text=drawColor,anchor=base,inner sep=0pt, outer sep=0pt, scale=  0.80] at ( 22.95,  9.60) {-4};

\node[text=drawColor,anchor=base,inner sep=0pt, outer sep=0pt, scale=  0.80] at ( 50.15,  9.60) {-2};

\node[text=drawColor,anchor=base,inner sep=0pt, outer sep=0pt, scale=  0.80] at ( 77.36,  9.60) {0};

\node[text=drawColor,anchor=base,inner sep=0pt, outer sep=0pt, scale=  0.80] at (104.56,  9.60) {2};

\node[text=drawColor,anchor=base,inner sep=0pt, outer sep=0pt, scale=  0.80] at (131.76,  9.60) {4};

\path[draw=drawColor,line width= 0.4pt,line join=round,line cap=round] ( 18.60, 22.35) -- ( 18.60,116.11);

\path[draw=drawColor,line width= 0.4pt,line join=round,line cap=round] ( 18.60, 22.35) -- ( 17.40, 22.35);

\path[draw=drawColor,line width= 0.4pt,line join=round,line cap=round] ( 18.60, 41.10) -- ( 17.40, 41.10);

\path[draw=drawColor,line width= 0.4pt,line join=round,line cap=round] ( 18.60, 59.85) -- ( 17.40, 59.85);

\path[draw=drawColor,line width= 0.4pt,line join=round,line cap=round] ( 18.60, 78.61) -- ( 17.40, 78.61);

\path[draw=drawColor,line width= 0.4pt,line join=round,line cap=round] ( 18.60, 97.36) -- ( 17.40, 97.36);

\path[draw=drawColor,line width= 0.4pt,line join=round,line cap=round] ( 18.60,116.11) -- ( 17.40,116.11);

\node[text=drawColor,rotate= 90.00,anchor=base,inner sep=0pt, outer sep=0pt, scale=  0.80] at ( 16.80, 22.35) {0};

\node[text=drawColor,rotate= 90.00,anchor=base,inner sep=0pt, outer sep=0pt, scale=  0.80] at ( 16.80, 41.10) {20};

\node[text=drawColor,rotate= 90.00,anchor=base,inner sep=0pt, outer sep=0pt, scale=  0.80] at ( 16.80, 59.85) {40};

\node[text=drawColor,rotate= 90.00,anchor=base,inner sep=0pt, outer sep=0pt, scale=  0.80] at ( 16.80, 78.61) {60};

\node[text=drawColor,rotate= 90.00,anchor=base,inner sep=0pt, outer sep=0pt, scale=  0.80] at ( 16.80, 97.36) {80};

\node[text=drawColor,rotate= 90.00,anchor=base,inner sep=0pt, outer sep=0pt, scale=  0.80] at ( 16.80,116.11) {100};
\end{scope}
\begin{scope}
\path[clip] ( 18.60, 18.60) rectangle (136.11,119.86);
\definecolor{drawColor}{RGB}{211,211,211}

\path[draw=drawColor,line width= 0.4pt,dash pattern=on 1pt off 3pt ,line join=round,line cap=round] ( 18.60, 22.35) -- (136.11, 22.35);

\path[draw=drawColor,line width= 0.4pt,dash pattern=on 1pt off 3pt ,line join=round,line cap=round] ( 18.60, 41.10) -- (136.11, 41.10);

\path[draw=drawColor,line width= 0.4pt,dash pattern=on 1pt off 3pt ,line join=round,line cap=round] ( 18.60, 59.85) -- (136.11, 59.85);

\path[draw=drawColor,line width= 0.4pt,dash pattern=on 1pt off 3pt ,line join=round,line cap=round] ( 18.60, 78.61) -- (136.11, 78.61);

\path[draw=drawColor,line width= 0.4pt,dash pattern=on 1pt off 3pt ,line join=round,line cap=round] ( 18.60, 97.36) -- (136.11, 97.36);

\path[draw=drawColor,line width= 0.4pt,dash pattern=on 1pt off 3pt ,line join=round,line cap=round] ( 18.60,116.11) -- (136.11,116.11);

\path[draw=drawColor,line width= 0.4pt,dash pattern=on 1pt off 3pt ,line join=round,line cap=round] ( 22.95, 18.60) -- ( 22.95,119.86);

\path[draw=drawColor,line width= 0.4pt,dash pattern=on 1pt off 3pt ,line join=round,line cap=round] ( 50.15, 18.60) -- ( 50.15,119.86);

\path[draw=drawColor,line width= 0.4pt,dash pattern=on 1pt off 3pt ,line join=round,line cap=round] ( 77.36, 18.60) -- ( 77.36,119.86);

\path[draw=drawColor,line width= 0.4pt,dash pattern=on 1pt off 3pt ,line join=round,line cap=round] (104.56, 18.60) -- (104.56,119.86);

\path[draw=drawColor,line width= 0.4pt,dash pattern=on 1pt off 3pt ,line join=round,line cap=round] (131.76, 18.60) -- (131.76,119.86);
\end{scope}
\begin{scope}
\path[clip] (  0.00,  0.00) rectangle (137.31,122.86);
\definecolor{drawColor}{RGB}{0,0,0}

\node[text=drawColor,anchor=base,inner sep=0pt, outer sep=0pt, scale=  1.00] at ( 77.36,  0.60) {\footnotesize$\theta$};

\node[text=drawColor,rotate= 90.00,anchor=base,inner sep=0pt, outer sep=0pt, scale=  1.00] at (  6.60, 69.23) {\footnotesize coverage};
\end{scope}
\begin{scope}
\path[clip] ( 18.60, 18.60) rectangle (136.11,119.86);
\definecolor{drawColor}{gray}{0.70}

\path[draw=drawColor,line width= 0.4pt,dash pattern=on 1pt off 3pt ,line join=round,line cap=round] ( 18.60,111.42) -- (136.11,111.42);
\definecolor{drawColor}{RGB}{0,0,0}

\path[draw=drawColor,line width= 0.4pt,line join=round,line cap=round] ( 22.95, 22.35) --
	( 59.22, 22.35) --
	( 95.49, 54.70) --
	(131.76,111.80);
\definecolor{fillColor}{RGB}{0,0,0}

\path[draw=drawColor,line width= 0.4pt,line join=round,line cap=round,fill=fillColor] ( 22.95, 22.35) circle (  1.12);

\path[draw=drawColor,line width= 0.4pt,line join=round,line cap=round,fill=fillColor] ( 59.22, 22.35) circle (  1.12);

\path[draw=drawColor,line width= 0.4pt,line join=round,line cap=round,fill=fillColor] ( 95.49, 54.70) circle (  1.12);

\path[draw=drawColor,line width= 0.4pt,line join=round,line cap=round,fill=fillColor] (131.76,111.80) circle (  1.12);
\definecolor{drawColor}{RGB}{223,83,107}

\path[draw=drawColor,line width= 0.4pt,line join=round,line cap=round] ( 22.95, 22.35) --
	( 59.22, 22.73) --
	( 95.49,113.20) --
	(131.76,116.01);
\definecolor{fillColor}{RGB}{223,83,107}

\path[draw=drawColor,line width= 0.4pt,line join=round,line cap=round,fill=fillColor] ( 22.95, 22.35) circle (  1.12);

\path[draw=drawColor,line width= 0.4pt,line join=round,line cap=round,fill=fillColor] ( 59.22, 22.73) circle (  1.12);

\path[draw=drawColor,line width= 0.4pt,line join=round,line cap=round,fill=fillColor] ( 95.49,113.20) circle (  1.12);

\path[draw=drawColor,line width= 0.4pt,line join=round,line cap=round,fill=fillColor] (131.76,116.01) circle (  1.12);
\definecolor{drawColor}{RGB}{97,208,79}

\path[draw=drawColor,line width= 0.4pt,line join=round,line cap=round] ( 22.95,111.61) --
	( 59.22,111.33) --
	( 95.49,111.51) --
	(131.76,111.42);
\definecolor{fillColor}{RGB}{97,208,79}

\path[draw=drawColor,line width= 0.4pt,line join=round,line cap=round,fill=fillColor] ( 22.95,111.61) circle (  1.12);

\path[draw=drawColor,line width= 0.4pt,line join=round,line cap=round,fill=fillColor] ( 59.22,111.33) circle (  1.12);

\path[draw=drawColor,line width= 0.4pt,line join=round,line cap=round,fill=fillColor] ( 95.49,111.51) circle (  1.12);

\path[draw=drawColor,line width= 0.4pt,line join=round,line cap=round,fill=fillColor] (131.76,111.42) circle (  1.12);
\definecolor{drawColor}{RGB}{34,151,230}

\path[draw=drawColor,line width= 0.4pt,line join=round,line cap=round] ( 22.95, 97.73) --
	( 59.22,104.39) --
	( 95.49,109.45) --
	(131.76,112.73);
\definecolor{fillColor}{RGB}{34,151,230}

\path[draw=drawColor,line width= 0.4pt,line join=round,line cap=round,fill=fillColor] ( 22.95, 97.73) circle (  1.12);

\path[draw=drawColor,line width= 0.4pt,line join=round,line cap=round,fill=fillColor] ( 59.22,104.39) circle (  1.12);

\path[draw=drawColor,line width= 0.4pt,line join=round,line cap=round,fill=fillColor] ( 95.49,109.45) circle (  1.12);

\path[draw=drawColor,line width= 0.4pt,line join=round,line cap=round,fill=fillColor] (131.76,112.73) circle (  1.12);
\definecolor{drawColor}{RGB}{40,226,229}

\path[draw=drawColor,line width= 0.4pt,line join=round,line cap=round] ( 22.95, 96.33) --
	( 59.22,101.95) --
	( 95.49,108.05) --
	(131.76,111.89);
\definecolor{fillColor}{RGB}{40,226,229}

\path[draw=drawColor,line width= 0.4pt,line join=round,line cap=round,fill=fillColor] ( 22.95, 96.33) circle (  1.12);

\path[draw=drawColor,line width= 0.4pt,line join=round,line cap=round,fill=fillColor] ( 59.22,101.95) circle (  1.12);

\path[draw=drawColor,line width= 0.4pt,line join=round,line cap=round,fill=fillColor] ( 95.49,108.05) circle (  1.12);

\path[draw=drawColor,line width= 0.4pt,line join=round,line cap=round,fill=fillColor] (131.76,111.89) circle (  1.12);
\definecolor{drawColor}{RGB}{245,199,16}

\path[draw=drawColor,line width= 0.4pt,line join=round,line cap=round] ( 22.95, 22.35) --
	( 59.22, 38.85) --
	( 95.49,111.51) --
	(131.76,111.70);
\definecolor{fillColor}{RGB}{245,199,16}

\path[draw=drawColor,line width= 0.4pt,line join=round,line cap=round,fill=fillColor] ( 22.95, 22.35) circle (  1.12);

\path[draw=drawColor,line width= 0.4pt,line join=round,line cap=round,fill=fillColor] ( 59.22, 38.85) circle (  1.12);

\path[draw=drawColor,line width= 0.4pt,line join=round,line cap=round,fill=fillColor] ( 95.49,111.51) circle (  1.12);

\path[draw=drawColor,line width= 0.4pt,line join=round,line cap=round,fill=fillColor] (131.76,111.70) circle (  1.12);
\end{scope}
\end{tikzpicture}
\input figures/Oracle-EB-Andrews-p50-nRep10000-selective-performance-nongaussian-coverage\\
% Created by tikzDevice version 0.12.6 on 2025-09-10 22:33:25
% !TEX encoding = UTF-8 Unicode
\begin{tikzpicture}[x=1pt,y=1pt]
\definecolor{fillColor}{RGB}{255,255,255}
\path[use as bounding box,fill=fillColor,fill opacity=0.00] (0,0) rectangle (137.31,122.86);
\begin{scope}
\path[clip] (  0.00,  0.00) rectangle (137.31,122.86);
\definecolor{drawColor}{RGB}{0,0,0}

\path[draw=drawColor,line width= 0.4pt,line join=round,line cap=round] ( 22.95, 18.60) -- (131.76, 18.60);

\path[draw=drawColor,line width= 0.4pt,line join=round,line cap=round] ( 22.95, 18.60) -- ( 22.95, 17.40);

\path[draw=drawColor,line width= 0.4pt,line join=round,line cap=round] ( 50.15, 18.60) -- ( 50.15, 17.40);

\path[draw=drawColor,line width= 0.4pt,line join=round,line cap=round] ( 77.36, 18.60) -- ( 77.36, 17.40);

\path[draw=drawColor,line width= 0.4pt,line join=round,line cap=round] (104.56, 18.60) -- (104.56, 17.40);

\path[draw=drawColor,line width= 0.4pt,line join=round,line cap=round] (131.76, 18.60) -- (131.76, 17.40);

\node[text=drawColor,anchor=base,inner sep=0pt, outer sep=0pt, scale=  0.80] at ( 22.95,  9.60) {-2};

\node[text=drawColor,anchor=base,inner sep=0pt, outer sep=0pt, scale=  0.80] at ( 50.15,  9.60) {-1};

\node[text=drawColor,anchor=base,inner sep=0pt, outer sep=0pt, scale=  0.80] at ( 77.36,  9.60) {0};

\node[text=drawColor,anchor=base,inner sep=0pt, outer sep=0pt, scale=  0.80] at (104.56,  9.60) {1};

\node[text=drawColor,anchor=base,inner sep=0pt, outer sep=0pt, scale=  0.80] at (131.76,  9.60) {2};

\path[draw=drawColor,line width= 0.4pt,line join=round,line cap=round] ( 18.60, 22.35) -- ( 18.60,116.11);

\path[draw=drawColor,line width= 0.4pt,line join=round,line cap=round] ( 18.60, 22.35) -- ( 17.40, 22.35);

\path[draw=drawColor,line width= 0.4pt,line join=round,line cap=round] ( 18.60, 37.98) -- ( 17.40, 37.98);

\path[draw=drawColor,line width= 0.4pt,line join=round,line cap=round] ( 18.60, 53.60) -- ( 17.40, 53.60);

\path[draw=drawColor,line width= 0.4pt,line join=round,line cap=round] ( 18.60, 69.23) -- ( 17.40, 69.23);

\path[draw=drawColor,line width= 0.4pt,line join=round,line cap=round] ( 18.60, 84.86) -- ( 17.40, 84.86);

\path[draw=drawColor,line width= 0.4pt,line join=round,line cap=round] ( 18.60,100.48) -- ( 17.40,100.48);

\path[draw=drawColor,line width= 0.4pt,line join=round,line cap=round] ( 18.60,116.11) -- ( 17.40,116.11);

\node[text=drawColor,rotate= 90.00,anchor=base,inner sep=0pt, outer sep=0pt, scale=  0.80] at ( 16.80, 22.35) {0};

\node[text=drawColor,rotate= 90.00,anchor=base,inner sep=0pt, outer sep=0pt, scale=  0.80] at ( 16.80, 37.98) {2};

\node[text=drawColor,rotate= 90.00,anchor=base,inner sep=0pt, outer sep=0pt, scale=  0.80] at ( 16.80, 53.60) {4};

\node[text=drawColor,rotate= 90.00,anchor=base,inner sep=0pt, outer sep=0pt, scale=  0.80] at ( 16.80, 69.23) {6};

\node[text=drawColor,rotate= 90.00,anchor=base,inner sep=0pt, outer sep=0pt, scale=  0.80] at ( 16.80, 84.86) {8};

\node[text=drawColor,rotate= 90.00,anchor=base,inner sep=0pt, outer sep=0pt, scale=  0.80] at ( 16.80,100.48) {10};

\node[text=drawColor,rotate= 90.00,anchor=base,inner sep=0pt, outer sep=0pt, scale=  0.80] at ( 16.80,116.11) {12};
\end{scope}
\begin{scope}
\path[clip] ( 18.60, 18.60) rectangle (136.11,119.86);
\definecolor{drawColor}{RGB}{211,211,211}

\path[draw=drawColor,line width= 0.4pt,dash pattern=on 1pt off 3pt ,line join=round,line cap=round] ( 18.60, 22.35) -- (136.11, 22.35);

\path[draw=drawColor,line width= 0.4pt,dash pattern=on 1pt off 3pt ,line join=round,line cap=round] ( 18.60, 37.98) -- (136.11, 37.98);

\path[draw=drawColor,line width= 0.4pt,dash pattern=on 1pt off 3pt ,line join=round,line cap=round] ( 18.60, 53.60) -- (136.11, 53.60);

\path[draw=drawColor,line width= 0.4pt,dash pattern=on 1pt off 3pt ,line join=round,line cap=round] ( 18.60, 69.23) -- (136.11, 69.23);

\path[draw=drawColor,line width= 0.4pt,dash pattern=on 1pt off 3pt ,line join=round,line cap=round] ( 18.60, 84.86) -- (136.11, 84.86);

\path[draw=drawColor,line width= 0.4pt,dash pattern=on 1pt off 3pt ,line join=round,line cap=round] ( 18.60,100.48) -- (136.11,100.48);

\path[draw=drawColor,line width= 0.4pt,dash pattern=on 1pt off 3pt ,line join=round,line cap=round] ( 18.60,116.11) -- (136.11,116.11);

\path[draw=drawColor,line width= 0.4pt,dash pattern=on 1pt off 3pt ,line join=round,line cap=round] ( 22.95, 18.60) -- ( 22.95,119.86);

\path[draw=drawColor,line width= 0.4pt,dash pattern=on 1pt off 3pt ,line join=round,line cap=round] ( 50.15, 18.60) -- ( 50.15,119.86);

\path[draw=drawColor,line width= 0.4pt,dash pattern=on 1pt off 3pt ,line join=round,line cap=round] ( 77.36, 18.60) -- ( 77.36,119.86);

\path[draw=drawColor,line width= 0.4pt,dash pattern=on 1pt off 3pt ,line join=round,line cap=round] (104.56, 18.60) -- (104.56,119.86);

\path[draw=drawColor,line width= 0.4pt,dash pattern=on 1pt off 3pt ,line join=round,line cap=round] (131.76, 18.60) -- (131.76,119.86);
\end{scope}
\begin{scope}
\path[clip] (  0.00,  0.00) rectangle (137.31,122.86);
\definecolor{drawColor}{RGB}{0,0,0}

\node[text=drawColor,anchor=base,inner sep=0pt, outer sep=0pt, scale=  1.00] at ( 77.36,  0.60) {\footnotesize$\theta$};

\node[text=drawColor,rotate= 90.00,anchor=base,inner sep=0pt, outer sep=0pt, scale=  1.00] at (  6.60, 69.23) {\footnotesize average width};
\end{scope}
\begin{scope}
\path[clip] ( 18.60, 18.60) rectangle (136.11,119.86);
\definecolor{drawColor}{RGB}{0,0,0}

\path[draw=drawColor,line width= 0.4pt,line join=round,line cap=round] ( 22.95, 52.98) --
	( 59.22, 52.98) --
	( 95.49, 52.98) --
	(131.76, 52.98);
\definecolor{fillColor}{RGB}{0,0,0}

\path[draw=drawColor,line width= 0.4pt,line join=round,line cap=round,fill=fillColor] ( 22.95, 52.98) circle (  1.12);

\path[draw=drawColor,line width= 0.4pt,line join=round,line cap=round,fill=fillColor] ( 59.22, 52.98) circle (  1.12);

\path[draw=drawColor,line width= 0.4pt,line join=round,line cap=round,fill=fillColor] ( 95.49, 52.98) circle (  1.12);

\path[draw=drawColor,line width= 0.4pt,line join=round,line cap=round,fill=fillColor] (131.76, 52.98) circle (  1.12);
\definecolor{drawColor}{RGB}{223,83,107}

\path[draw=drawColor,line width= 0.4pt,line join=round,line cap=round] ( 22.95, 73.75) --
	( 59.22, 73.75) --
	( 95.49, 73.75) --
	(131.76, 73.75);
\definecolor{fillColor}{RGB}{223,83,107}

\path[draw=drawColor,line width= 0.4pt,line join=round,line cap=round,fill=fillColor] ( 22.95, 73.75) circle (  1.12);

\path[draw=drawColor,line width= 0.4pt,line join=round,line cap=round,fill=fillColor] ( 59.22, 73.75) circle (  1.12);

\path[draw=drawColor,line width= 0.4pt,line join=round,line cap=round,fill=fillColor] ( 95.49, 73.75) circle (  1.12);

\path[draw=drawColor,line width= 0.4pt,line join=round,line cap=round,fill=fillColor] (131.76, 73.75) circle (  1.12);
\definecolor{drawColor}{RGB}{97,208,79}

\path[draw=drawColor,line width= 0.4pt,line join=round,line cap=round] ( 22.95,103.87) --
	( 59.22, 95.05) --
	( 95.49, 85.68) --
	(131.76, 74.85);
\definecolor{fillColor}{RGB}{97,208,79}

\path[draw=drawColor,line width= 0.4pt,line join=round,line cap=round,fill=fillColor] ( 22.95,103.87) circle (  1.12);

\path[draw=drawColor,line width= 0.4pt,line join=round,line cap=round,fill=fillColor] ( 59.22, 95.05) circle (  1.12);

\path[draw=drawColor,line width= 0.4pt,line join=round,line cap=round,fill=fillColor] ( 95.49, 85.68) circle (  1.12);

\path[draw=drawColor,line width= 0.4pt,line join=round,line cap=round,fill=fillColor] (131.76, 74.85) circle (  1.12);
\definecolor{drawColor}{RGB}{34,151,230}

\path[draw=drawColor,line width= 0.4pt,line join=round,line cap=round] ( 22.95,101.58) --
	( 59.22, 92.56) --
	( 95.49, 82.91) --
	(131.76, 72.87);
\definecolor{fillColor}{RGB}{34,151,230}

\path[draw=drawColor,line width= 0.4pt,line join=round,line cap=round,fill=fillColor] ( 22.95,101.58) circle (  1.12);

\path[draw=drawColor,line width= 0.4pt,line join=round,line cap=round,fill=fillColor] ( 59.22, 92.56) circle (  1.12);

\path[draw=drawColor,line width= 0.4pt,line join=round,line cap=round,fill=fillColor] ( 95.49, 82.91) circle (  1.12);

\path[draw=drawColor,line width= 0.4pt,line join=round,line cap=round,fill=fillColor] (131.76, 72.87) circle (  1.12);
\definecolor{drawColor}{RGB}{40,226,229}

\path[draw=drawColor,line width= 0.4pt,line join=round,line cap=round] ( 22.95,102.40) --
	( 59.22, 93.51) --
	( 95.49, 83.64) --
	(131.76, 73.59);
\definecolor{fillColor}{RGB}{40,226,229}

\path[draw=drawColor,line width= 0.4pt,line join=round,line cap=round,fill=fillColor] ( 22.95,102.40) circle (  1.12);

\path[draw=drawColor,line width= 0.4pt,line join=round,line cap=round,fill=fillColor] ( 59.22, 93.51) circle (  1.12);

\path[draw=drawColor,line width= 0.4pt,line join=round,line cap=round,fill=fillColor] ( 95.49, 83.64) circle (  1.12);

\path[draw=drawColor,line width= 0.4pt,line join=round,line cap=round,fill=fillColor] (131.76, 73.59) circle (  1.12);
\definecolor{drawColor}{RGB}{245,199,16}

\path[draw=drawColor,line width= 0.4pt,line join=round,line cap=round] ( 22.95, 60.81) --
	( 59.22, 61.96) --
	( 95.49, 63.34) --
	(131.76, 63.55);
\definecolor{fillColor}{RGB}{245,199,16}

\path[draw=drawColor,line width= 0.4pt,line join=round,line cap=round,fill=fillColor] ( 22.95, 60.81) circle (  1.12);

\path[draw=drawColor,line width= 0.4pt,line join=round,line cap=round,fill=fillColor] ( 59.22, 61.96) circle (  1.12);

\path[draw=drawColor,line width= 0.4pt,line join=round,line cap=round,fill=fillColor] ( 95.49, 63.34) circle (  1.12);

\path[draw=drawColor,line width= 0.4pt,line join=round,line cap=round,fill=fillColor] (131.76, 63.55) circle (  1.12);
\end{scope}
\end{tikzpicture}
% Created by tikzDevice version 0.12.6 on 2025-09-10 22:33:26
% !TEX encoding = UTF-8 Unicode
\begin{tikzpicture}[x=1pt,y=1pt]
\definecolor{fillColor}{RGB}{255,255,255}
\path[use as bounding box,fill=fillColor,fill opacity=0.00] (0,0) rectangle (137.31,122.86);
\begin{scope}
\path[clip] (  0.00,  0.00) rectangle (137.31,122.86);
\definecolor{drawColor}{RGB}{0,0,0}

\path[draw=drawColor,line width= 0.4pt,line join=round,line cap=round] ( 22.95, 18.60) -- (131.76, 18.60);

\path[draw=drawColor,line width= 0.4pt,line join=round,line cap=round] ( 22.95, 18.60) -- ( 22.95, 17.40);

\path[draw=drawColor,line width= 0.4pt,line join=round,line cap=round] ( 50.15, 18.60) -- ( 50.15, 17.40);

\path[draw=drawColor,line width= 0.4pt,line join=round,line cap=round] ( 77.36, 18.60) -- ( 77.36, 17.40);

\path[draw=drawColor,line width= 0.4pt,line join=round,line cap=round] (104.56, 18.60) -- (104.56, 17.40);

\path[draw=drawColor,line width= 0.4pt,line join=round,line cap=round] (131.76, 18.60) -- (131.76, 17.40);

\node[text=drawColor,anchor=base,inner sep=0pt, outer sep=0pt, scale=  0.80] at ( 22.95,  9.60) {-4};

\node[text=drawColor,anchor=base,inner sep=0pt, outer sep=0pt, scale=  0.80] at ( 50.15,  9.60) {-2};

\node[text=drawColor,anchor=base,inner sep=0pt, outer sep=0pt, scale=  0.80] at ( 77.36,  9.60) {0};

\node[text=drawColor,anchor=base,inner sep=0pt, outer sep=0pt, scale=  0.80] at (104.56,  9.60) {2};

\node[text=drawColor,anchor=base,inner sep=0pt, outer sep=0pt, scale=  0.80] at (131.76,  9.60) {4};

\path[draw=drawColor,line width= 0.4pt,line join=round,line cap=round] ( 18.60, 22.35) -- ( 18.60,116.11);

\path[draw=drawColor,line width= 0.4pt,line join=round,line cap=round] ( 18.60, 22.35) -- ( 17.40, 22.35);

\path[draw=drawColor,line width= 0.4pt,line join=round,line cap=round] ( 18.60, 37.98) -- ( 17.40, 37.98);

\path[draw=drawColor,line width= 0.4pt,line join=round,line cap=round] ( 18.60, 53.60) -- ( 17.40, 53.60);

\path[draw=drawColor,line width= 0.4pt,line join=round,line cap=round] ( 18.60, 69.23) -- ( 17.40, 69.23);

\path[draw=drawColor,line width= 0.4pt,line join=round,line cap=round] ( 18.60, 84.86) -- ( 17.40, 84.86);

\path[draw=drawColor,line width= 0.4pt,line join=round,line cap=round] ( 18.60,100.48) -- ( 17.40,100.48);

\path[draw=drawColor,line width= 0.4pt,line join=round,line cap=round] ( 18.60,116.11) -- ( 17.40,116.11);

\node[text=drawColor,rotate= 90.00,anchor=base,inner sep=0pt, outer sep=0pt, scale=  0.80] at ( 16.80, 22.35) {0};

\node[text=drawColor,rotate= 90.00,anchor=base,inner sep=0pt, outer sep=0pt, scale=  0.80] at ( 16.80, 37.98) {2};

\node[text=drawColor,rotate= 90.00,anchor=base,inner sep=0pt, outer sep=0pt, scale=  0.80] at ( 16.80, 53.60) {4};

\node[text=drawColor,rotate= 90.00,anchor=base,inner sep=0pt, outer sep=0pt, scale=  0.80] at ( 16.80, 69.23) {6};

\node[text=drawColor,rotate= 90.00,anchor=base,inner sep=0pt, outer sep=0pt, scale=  0.80] at ( 16.80, 84.86) {8};

\node[text=drawColor,rotate= 90.00,anchor=base,inner sep=0pt, outer sep=0pt, scale=  0.80] at ( 16.80,100.48) {10};

\node[text=drawColor,rotate= 90.00,anchor=base,inner sep=0pt, outer sep=0pt, scale=  0.80] at ( 16.80,116.11) {12};
\end{scope}
\begin{scope}
\path[clip] ( 18.60, 18.60) rectangle (136.11,119.86);
\definecolor{drawColor}{RGB}{211,211,211}

\path[draw=drawColor,line width= 0.4pt,dash pattern=on 1pt off 3pt ,line join=round,line cap=round] ( 18.60, 22.35) -- (136.11, 22.35);

\path[draw=drawColor,line width= 0.4pt,dash pattern=on 1pt off 3pt ,line join=round,line cap=round] ( 18.60, 37.98) -- (136.11, 37.98);

\path[draw=drawColor,line width= 0.4pt,dash pattern=on 1pt off 3pt ,line join=round,line cap=round] ( 18.60, 53.60) -- (136.11, 53.60);

\path[draw=drawColor,line width= 0.4pt,dash pattern=on 1pt off 3pt ,line join=round,line cap=round] ( 18.60, 69.23) -- (136.11, 69.23);

\path[draw=drawColor,line width= 0.4pt,dash pattern=on 1pt off 3pt ,line join=round,line cap=round] ( 18.60, 84.86) -- (136.11, 84.86);

\path[draw=drawColor,line width= 0.4pt,dash pattern=on 1pt off 3pt ,line join=round,line cap=round] ( 18.60,100.48) -- (136.11,100.48);

\path[draw=drawColor,line width= 0.4pt,dash pattern=on 1pt off 3pt ,line join=round,line cap=round] ( 18.60,116.11) -- (136.11,116.11);

\path[draw=drawColor,line width= 0.4pt,dash pattern=on 1pt off 3pt ,line join=round,line cap=round] ( 22.95, 18.60) -- ( 22.95,119.86);

\path[draw=drawColor,line width= 0.4pt,dash pattern=on 1pt off 3pt ,line join=round,line cap=round] ( 50.15, 18.60) -- ( 50.15,119.86);

\path[draw=drawColor,line width= 0.4pt,dash pattern=on 1pt off 3pt ,line join=round,line cap=round] ( 77.36, 18.60) -- ( 77.36,119.86);

\path[draw=drawColor,line width= 0.4pt,dash pattern=on 1pt off 3pt ,line join=round,line cap=round] (104.56, 18.60) -- (104.56,119.86);

\path[draw=drawColor,line width= 0.4pt,dash pattern=on 1pt off 3pt ,line join=round,line cap=round] (131.76, 18.60) -- (131.76,119.86);
\end{scope}
\begin{scope}
\path[clip] (  0.00,  0.00) rectangle (137.31,122.86);
\definecolor{drawColor}{RGB}{0,0,0}

\node[text=drawColor,anchor=base,inner sep=0pt, outer sep=0pt, scale=  1.00] at ( 77.36,  0.60) {\footnotesize$\theta$};

\node[text=drawColor,rotate= 90.00,anchor=base,inner sep=0pt, outer sep=0pt, scale=  1.00] at (  6.60, 69.23) {\footnotesize average width};
\end{scope}
\begin{scope}
\path[clip] ( 18.60, 18.60) rectangle (136.11,119.86);
\definecolor{drawColor}{RGB}{0,0,0}

\path[draw=drawColor,line width= 0.4pt,line join=round,line cap=round] ( 22.95, 52.98) --
	( 59.22, 52.98) --
	( 95.49, 52.98) --
	(131.76, 52.98);
\definecolor{fillColor}{RGB}{0,0,0}

\path[draw=drawColor,line width= 0.4pt,line join=round,line cap=round,fill=fillColor] ( 22.95, 52.98) circle (  1.12);

\path[draw=drawColor,line width= 0.4pt,line join=round,line cap=round,fill=fillColor] ( 59.22, 52.98) circle (  1.12);

\path[draw=drawColor,line width= 0.4pt,line join=round,line cap=round,fill=fillColor] ( 95.49, 52.98) circle (  1.12);

\path[draw=drawColor,line width= 0.4pt,line join=round,line cap=round,fill=fillColor] (131.76, 52.98) circle (  1.12);
\definecolor{drawColor}{RGB}{223,83,107}

\path[draw=drawColor,line width= 0.4pt,line join=round,line cap=round] ( 22.95, 73.75) --
	( 59.22, 73.75) --
	( 95.49, 73.75) --
	(131.76, 73.75);
\definecolor{fillColor}{RGB}{223,83,107}

\path[draw=drawColor,line width= 0.4pt,line join=round,line cap=round,fill=fillColor] ( 22.95, 73.75) circle (  1.12);

\path[draw=drawColor,line width= 0.4pt,line join=round,line cap=round,fill=fillColor] ( 59.22, 73.75) circle (  1.12);

\path[draw=drawColor,line width= 0.4pt,line join=round,line cap=round,fill=fillColor] ( 95.49, 73.75) circle (  1.12);

\path[draw=drawColor,line width= 0.4pt,line join=round,line cap=round,fill=fillColor] (131.76, 73.75) circle (  1.12);
\definecolor{drawColor}{RGB}{97,208,79}

\path[draw=drawColor,line width= 0.4pt,line join=round,line cap=round] ( 22.95,107.51) --
	( 59.22, 95.71) --
	( 95.49, 81.33) --
	(131.76, 65.52);
\definecolor{fillColor}{RGB}{97,208,79}

\path[draw=drawColor,line width= 0.4pt,line join=round,line cap=round,fill=fillColor] ( 22.95,107.51) circle (  1.12);

\path[draw=drawColor,line width= 0.4pt,line join=round,line cap=round,fill=fillColor] ( 59.22, 95.71) circle (  1.12);

\path[draw=drawColor,line width= 0.4pt,line join=round,line cap=round,fill=fillColor] ( 95.49, 81.33) circle (  1.12);

\path[draw=drawColor,line width= 0.4pt,line join=round,line cap=round,fill=fillColor] (131.76, 65.52) circle (  1.12);
\definecolor{drawColor}{RGB}{34,151,230}

\path[draw=drawColor,line width= 0.4pt,line join=round,line cap=round] ( 22.95, 97.01) --
	( 59.22, 86.02) --
	( 95.49, 73.56) --
	(131.76, 61.58);
\definecolor{fillColor}{RGB}{34,151,230}

\path[draw=drawColor,line width= 0.4pt,line join=round,line cap=round,fill=fillColor] ( 22.95, 97.01) circle (  1.12);

\path[draw=drawColor,line width= 0.4pt,line join=round,line cap=round,fill=fillColor] ( 59.22, 86.02) circle (  1.12);

\path[draw=drawColor,line width= 0.4pt,line join=round,line cap=round,fill=fillColor] ( 95.49, 73.56) circle (  1.12);

\path[draw=drawColor,line width= 0.4pt,line join=round,line cap=round,fill=fillColor] (131.76, 61.58) circle (  1.12);
\definecolor{drawColor}{RGB}{40,226,229}

\path[draw=drawColor,line width= 0.4pt,line join=round,line cap=round] ( 22.95, 99.70) --
	( 59.22, 87.10) --
	( 95.49, 74.07) --
	(131.76, 61.53);
\definecolor{fillColor}{RGB}{40,226,229}

\path[draw=drawColor,line width= 0.4pt,line join=round,line cap=round,fill=fillColor] ( 22.95, 99.70) circle (  1.12);

\path[draw=drawColor,line width= 0.4pt,line join=round,line cap=round,fill=fillColor] ( 59.22, 87.10) circle (  1.12);

\path[draw=drawColor,line width= 0.4pt,line join=round,line cap=round,fill=fillColor] ( 95.49, 74.07) circle (  1.12);

\path[draw=drawColor,line width= 0.4pt,line join=round,line cap=round,fill=fillColor] (131.76, 61.53) circle (  1.12);
\definecolor{drawColor}{RGB}{245,199,16}

\path[draw=drawColor,line width= 0.4pt,line join=round,line cap=round] ( 22.95, 58.41) --
	( 59.22, 60.71) --
	( 95.49, 62.93) --
	(131.76, 62.78);
\definecolor{fillColor}{RGB}{245,199,16}

\path[draw=drawColor,line width= 0.4pt,line join=round,line cap=round,fill=fillColor] ( 22.95, 58.41) circle (  1.12);

\path[draw=drawColor,line width= 0.4pt,line join=round,line cap=round,fill=fillColor] ( 59.22, 60.71) circle (  1.12);

\path[draw=drawColor,line width= 0.4pt,line join=round,line cap=round,fill=fillColor] ( 95.49, 62.93) circle (  1.12);

\path[draw=drawColor,line width= 0.4pt,line join=round,line cap=round,fill=fillColor] (131.76, 62.78) circle (  1.12);
\end{scope}
\end{tikzpicture}
\input figures/Oracle-EB-Andrews-p50-nRep10000-selective-performance-nongaussian-width\\
\caption{Coverage and average width of empirical Bayesian confidence procedures based on the oracle (oracle \protect\colorline{Rcol3}, Gaussian EB \protect\colorline{Rcol4}, nonparametric EB \protect\colorline{Rcol5}). Left and middle columns are for the Gaussian experiments with scale 0.5 and 1.4 respectively, and the right column is for the Gaussian mixture setting. Also included are the ordinary confidence interval (\protect\colorline{Rcol1}) and its Bonferroni adjustment (\protect\colorline{Rcol2}), as well as the hybrid interval (\protect\colorline{Rcol7}) of \cite{andrews2024inference}. Both the Bonferroni-adjusted and the hybrid procedures maintain marginal coverage.}
\label{fig:eb-result}
\end{figure}

Figure \ref{fig:eb-result} shows how the 95\% confidence procedures associated with the two empirical Bayesian methods perform in terms of selective coverage and average width. For comparison, the figure also shows performance of the unadjusted interval ($Y \pm 1.96 \sigma$), its Bonferroni adjusted version ($Y \pm \Phi^{-1}(1 - \frac{0.05}{2p}) \sigma = Y \pm 3.29\sigma$), and the hybrid method of \cite{andrews2024inference}. All of these benchmark methods have relatively low selective coverage for small $\theta$ values. In contrast, for the Gaussian $\bs\eta$ experiment with low spread of $\eta_j$'s, the selective coverage of the empirical Bayesian methods is close to the nominal level over a wide range of $\theta$ values, including the case of $\theta = -2$ which corresponds to a selection probability of $1.8\times 10^{-5}$. Additionally, the average width is comparable to that of the oracle method. These results are expected because of the close resemblance between the associated power functions that we saw in Figure \ref{fig:eb-power}. For the second Gaussian experiment with a larger $v$ value, the coverage guarantees drop off as $\theta$ is shifted to the left, but still remain reasonably high ($\approx 80\%$) even when selection events for such a $\theta$ are  extremely rare (e.g., $<2\times 10^{-12}$ for $\theta = -4$). Here again the empirical Bayesian methods give similar selective performances which are far superior to those of the benchmarks. For the Gaussian mixture experiment, which has the same spread of $\eta_j$'s as the second Gaussian experiment, the nonparametric empirical Bayesian method performs about the same as it does in the second Gaussian experiment, but the Gaussian empirical Bayesian method offers poorer selective coverage for very small $\theta$ values. 
We note that the hybrid method of \cite{andrews2024inference} appears to offer selective coverage comparable to the Bonferroni adjusted interval. This phenomenon is partially explained by the fact that the hybrid method restricts the final interval to be enclosed within a further relaxed Bonferroni interval $Y \pm \Phi^{-1}(1 - \frac{0.05\times 0.1}{2p})\sigma = Y \pm 3.89\sigma$, and hence inherits the low selective coverage properties of this enclosing interval. 

In general, the expected widths of the procedures are positively related to their 
selective coverage control. The unadjusted, Bonferroni, and hybrid method have, in order, the smallest average interval widths and the lowest selective coverage control, at least for the rare events. The empirical Bayes procedures maintain reasonable 
selective coverage control even for extremely rare events, 
but have wider interval widths. 
As we might expect, the relative ordering of the average widths of these procedures  appears perfectly correlated with the relative ordering of the power functions to the left of the null value in Figure \ref{fig:eb-power}. 
Among all procedures evaluated, the oracle procedure is the only one with exact selective coverage control for 
all values of $\theta$, and correspondingly, generally 
has the highest expected interval widths.
The implication of these numerical results is the same 
as that of the theoretical results in Section 2 - the price to be paid for selective 
coverage control is wider expected interval width, even for an oracle procedure.

%Given a vector $\bs X = (X_1,\ldots,X_n)$ of independent observations $X_i \sim p_i(x) := \int k_i(x|\theta)g(\theta)d\theta$, the predictive recursion algorithm starts with an initial guess $g_0(\theta)$ of $g(\theta)$ and then recursively updates it as
%\[
%g_i(\theta) = (1 - w_i) g_{i-1}(\theta) + (1 - w_i) \frac{k_i(X_i|\theta)g_{i-1}(\theta)}{\int k_i(X_i|\theta')g_{i-1}(\theta')d\theta'},~i = 1,\ldots,n,
%\]
%and produces $g_n(\theta)$ as the final estimate, where $w_i \in (0,1)$, $i \ge 1$, are predetermined weights, typically taken to be $w_i \propto (1+ i)^{-b}$ for some $b \in (1/2,1]$ to satisfy the asymptotic conditions $\sum_i w_i = \infty$ and $\sum_i w_i^2 < \infty$ under which $g_n(\theta)$ can be shown to converge weakly to the true density $g(\theta)$ when all $k_i(\cdot|\theta)$ equal a single kernel $k(\cdot|\theta)$ that satisfies some regularity conditions and $g(\theta)$ is compactly supported \citep{tokdar2009consistency}. For our application we take $\bs X = \bs Y_{\!\!\!-s}$, $n = p - 1$, and $k_i(x|\theta) = f(x|\theta,\sigma_i)$.% = f(x|\theta,\sigma)I(x < Y_s)/F(Y_s|\theta,\sigma)$.

\section{Discussion} 
It is well-known that 
estimation and inference procedures for data-selected populations 
that do not account for the selection process can be misleading, giving 
biased estimates, and tests and confidence intervals with poor error 
rate control \citep{benjamini_2010,taylor_tibshirani_2015}.  
Selection-adjusted procedures can 
be constructed that do maintain some type of error rate control, but 
the type of error control that is maintained will determine how powerful or 
precise the resulting tests and confidence intervals can be. 
In this article, we have studied the relationship between confidence interval 
precision and selective coverage rates 
  for the mean of the ``winning''  population 
in the multiple normal means model. 
We have found, not surprisingly, that they are 
inversely related - procedures with good selective coverage control are 
wider than those that only have marginal coverage control. 

However, the choice of 
procedure should be driven primarily by the type of error rate control that 
is most relevant for the inference to be made, with expected width being a 
secondary consideration. 
Recall the example from the Introduction, where the data
correspond to educational outcomes of different schools in a school system. It was 
argued that a system superintendent who oversees all 
of the schools 
may be more interested in marginal coverage control, whereas the staff of a given school may,  
upon their selection, be primarily interested in selective coverage control.  
The rationale for this divergence is that 
the different types of errors have different consequences for the two parties. 

In this article our evaluation criteria have been frequentist, and the procedures we have studied have been  frequentist 
in nature, in that they were derived from the inversion 
of level-$\alpha$ 
hypothesis tests. 
It is worth considering how the marginal perspective (that of the superintendent) 
and the selective perspective
(that of the staff of the selected school, or an ``underdog'') diverge when using purely Bayesian methods. 
Without going into extensive details, a Bayesian interested in only marginal control 
could proceed simply by constructing standard posterior credible intervals: If
$\mu_1,\ldots,\mu_{p+1}$ are i.i.d.\ $\pi$, then marginally over $\bs\mu$,  the probability 
that the $\mu$-value of the winning group being is in its credible interval is 
exactly $1-\alpha$. Conditionally on $\bs\mu$ but marginally over the winning group, this 
coverage probability will still be close to $1-\alpha$ if $\pi(\bs\mu)$ is a reasonable approximation to the empirical 
distribution of $\mu_1,\ldots, \mu_{p+1}$, but as usual, potentially far from $1-\alpha$ 
if the the prior $\pi$ is inaccurate.  
However, even if the prior is accurate, frequentist selective coverage control will not 
be maintained uniformly across selection events. 

In contrast, from the perspective of an underdog, or of a selected school, all inferences are conditional in the selection event, and so the appropriate model for inference has 
densities $\{ \psel(y| \theta,\bs\eta )\times \pcon(\bs x| y,\bs\eta) : (\theta, \bs\eta)\in \mathbb R^{p+1}\}$ 
given by (\ref{eqn:fjcdensity}). Unlike the unadjusted credible interval just described, 
 a posterior credible interval for $\theta$ will have reasonable selective coverage control 
as long as the prior is not inaccurate. In particular, if the prior for $\theta$ is 
diffuse, and the prior for $\bs\eta$  resembles the empirical distribution of $\eta_1,\ldots, \eta_p$, we expect the Bayesian credible interval procedure to closely resemble the oracle and 
empirical Bayes procedures described in the article. 

From a methodological point of view, 
we have also shown empirically that, in this type of multipopulation scenario, 
an oracle procedure that has finite expected width and exact 
selective coverage control can be reasonably approximated by empirical Bayes 
procedures that estimate the nuisance parameters in the selection-adjusted 
sampling model for the selected group. 
Such hybrid frequentist-empirical Bayes procedures may be useful in other 
selective inference procedures where selective coverage control is desired but cannot be exactly maintained without the knowledge of nuisance parameters.

\bibliographystyle{plainnat}
\bibliography{refs} 

\appendix 
\section{Auxiliary results}
\label{app:A}
%The calculations below are done for the case of $p =2$ and $\sigma_1 = \sigma_2 = 1$. 
%For selection-specific and complete conditional guarantee calculations, it is sufficient to fix $S = 2$. 
For any $(\eta,\theta) \in \bbR^2$ let $Q_{\eta,\theta} = N(\eta,1)\times N(\theta,1)$ and $P_{\eta,\theta} = Q_{\eta,\theta}|_{\bbU}$ where $\bbU = \{(x,y) \in \bbR^2: x \le y\}$. Clearly, our focus is on reporting a confidence set for $\theta$ based on a paired observation $(X,Y) \sim P_{\eta,\theta}$. 
The density of $P_{\eta,\theta}$ can be written as
\[
\textstyle p(x,y|\eta,\theta) = \frac{1}{2\pi c(\eta - \theta)}\exp\{-\frac{(x - \eta)^2 + (y-\theta)^2}{2}\} \times 1((x,y) \in \bbU),
\]
with $c(\eta) := Q_{\eta,0}(\bbU)$, due to the fact that $Q_{\eta,\theta}(\bbU) = Q_{\eta - \theta,0}(\bbU)$. Clearly, $c(0) = \frac12$. Indeed, the following statements can be made about $c(\eta)$. Below $\phi(x)$ and $\Phi(x)$ denote the density and distribution function of the standard normal distribution. 

\begin{lemma}
\label{lem00}
$c(\eta) = \Phi(-\frac{\eta}{\sqrt2})$ and $\lim_{\eta \to \infty}\sqrt\pi\eta e^{\eta^2/4}c(\eta) = 1$. 
\end{lemma}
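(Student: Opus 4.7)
The plan is to handle the two statements in sequence, both via direct computation using the definition $c(\eta) = Q_{\eta,0}(\mathbb{U}) = \Pr(X \le Y)$ where $X \sim N(\eta,1)$ and $Y \sim N(0,1)$ are independent.

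For the closed form, I would introduce $W = Y - X$, which is a sum of independent normals and therefore satisfies $W \sim N(-\eta, 2)$. The event $\{X \le Y\} = \{W \ge 0\}$ has probability
\[
\Pr(W \ge 0) \;=\; \Pr\!\left(\tfrac{W + \eta}{\sqrt{2}} \ge \tfrac{\eta}{\sqrt 2}\right) \;=\; 1 - \Phi\!\left(\tfrac{\eta}{\sqrt 2}\right) \;=\; \Phi\!\left(-\tfrac{\eta}{\sqrt 2}\right),
\]
which gives the first claim.

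For the asymptotic, I would invoke the standard Mill's ratio asymptotic $1 - \Phi(z) \sim \phi(z)/z$ as $z \to \infty$ (this is the sharp form of the inequality already cited as ``Mill's ratio'' in Appendix \ref{app:A}, and can be proved by a single integration by parts on $\int_z^\infty \phi(t)\,dt$). Substituting $z = \eta/\sqrt{2}$ and using $\phi(z) = (2\pi)^{-1/2} e^{-z^2/2}$, so that $\phi(\eta/\sqrt 2) = (2\pi)^{-1/2} e^{-\eta^2/4}$, gives
\[
c(\eta) \;=\; 1 - \Phi\!\left(\tfrac{\eta}{\sqrt 2}\right) \;\sim\; \frac{\phi(\eta/\sqrt 2)}{\eta/\sqrt 2} \;=\; \frac{\sqrt 2}{\eta}\cdot\frac{1}{\sqrt{2\pi}} e^{-\eta^2/4} \;=\; \frac{1}{\sqrt\pi\,\eta}\,e^{-\eta^2/4}.
\]
Multiplying both sides by $\sqrt\pi\,\eta\,e^{\eta^2/4}$ yields the stated limit.

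There is no real obstacle here; the lemma is entirely a book-keeping exercise combining the convolution identity for normals with Mill's ratio asymptotics. The only small subtlety worth a sentence is justifying that the $\sim$ in Mill's ratio translates into the precise limit statement $\sqrt\pi\,\eta\,e^{\eta^2/4} c(\eta) \to 1$, which follows immediately because multiplication by the explicit prefactor $\sqrt\pi\,\eta\,e^{\eta^2/4}$ is exactly the reciprocal of the leading term in the asymptotic.
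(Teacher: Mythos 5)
Your proposal is correct and follows essentially the same route as the paper: both arguments reduce $c(\eta)$ to the probability that a $N(\eta,2)$-distributed difference is nonpositive (giving $\Phi(-\eta/\sqrt2)$ after standardizing), and both obtain the limit from Mill's ratio applied at $\eta/\sqrt2$. The only cosmetic difference is that the paper writes out the two-sided Mill's ratio inequality to squeeze $\sqrt\pi\,\eta\,e^{\eta^2/4}c(\eta)$ between $\frac{\eta^2/2}{1+\eta^2/2}$ and $1$, whereas you invoke the asymptotic equivalence $1-\Phi(z)\sim\phi(z)/z$ directly; these are the same fact.
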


\begin{proof}
Let $Z_1,Z_2$ be independent standard normal variables. Then, $c(\eta) = Q_{\eta,0}(\bbU) = \Pr(Z_1 + \eta \le Z_2) = \Pr(\frac{Z_1 - Z_2}{\sqrt 2} \le -\frac{\eta}{\sqrt 2}) = \Phi(-\frac{\eta}{\sqrt 2})$ since $\frac{Z_1 - Z_2}{\sqrt 2}$ is also a standard normal variable. Apply the well known Mill's ratio inequalities for the standard normal distribution, namely,
\[
\frac{x}{1 + x^2} < \frac{\Phi(-x)}{\phi(x)} < \frac 1x, \quad x > 0
\]
to conclude
\begin{equation}
\frac{\eta^2/2}{1 + \eta^2/2} < \sqrt{\pi}\eta e^{\eta^2/4}c(\eta) < 1, \quad \eta > 0,
\end{equation}
and hence $\lim_{\eta \to \infty}\sqrt\pi\eta e^{\eta^2/4}c(\eta)$ exists and must equal 1. 
\end{proof}

Exact analytical formulas for probabilities under $P_{\eta,\theta}$ are hard to obtain. However, we will shortly establish the crucial result that much of the probability under $P_{\eta,0}$ concentrates on the circle section (see Figure \ref{fig:B})
\begin{equation}
\label{eq:set-b}
B_{\eta,\Delta} = \{(x,y) \in \bbU: (x - \eta)^2 + y^2 \le \tfrac{\eta^2}2 + \Delta^2\}
\end{equation}
universally across $\eta \ge 0$ for a fixed and large $\Delta > 0$. There are multiple integral formulas for expressing $P_{\eta,0}(B_{\eta,\Delta})$. One set of such formulas can be derived from the following considerations. Again, let $Z_1,Z_2$ be independent standard normal variables. Then, $U = \frac{Z_2 - Z_1}{\sqrt 2}$ and $V = \frac{Z_1 + Z_2}{\sqrt 2}$ also are independent standard normal variables, and 
\[
Q_{\eta,0}(B_{\eta,\Delta}) = \Pr(Z_1 + \eta \le Z_2, Z_1^2 + Z_2^2 \le r^2) = \Pr(U \ge \tfrac\eta{\sqrt 2}, U^2 + V^2 \le r^2).
\]
This last expression immediately suggests the integral formula
\begin{align}
P_{\eta,0}(B_{\eta,\Delta}) = \frac{Q_{\eta,0}(B_{\eta,\Delta})}{c(\eta)}
& = 2\int_0^\Delta \phi(v) \frac{\Phi(\sqrt{{\eta^2}/2 + \Delta^2 - v^2}) - \Phi(\eta/{\sqrt 2})}{1 - \Phi(\eta/\sqrt2)}dv \label{eq:prob-b-2},
\end{align}
which will prove valuable in establishing sharp universal bounds on $P_{\eta,0}(B_{\eta,\Delta})$. Before proceeding, we note a useful elementary result relating to normal distributions. 

\begin{figure}[t]
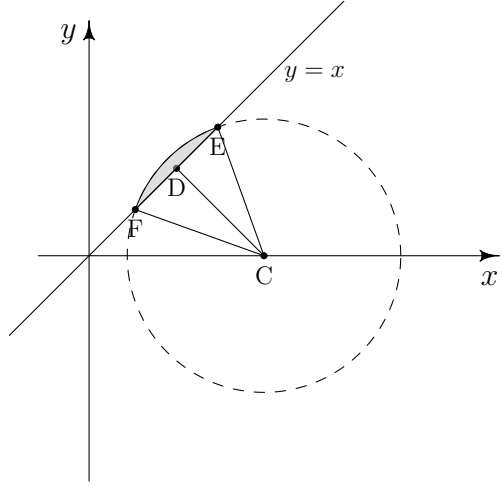

\centering
\input figures/setB
\caption{The shaded region is the set $B_{\eta,\Delta}$ (for a given $\Delta > 0$) which is the intersection of the half plane $\bbU$ with  the circle with center C = $(\eta,0)$ and of radius $r$ satisfying $r^2 = \frac{\eta^2}2+\Delta^2$. The segment CD is perpendicular to the diagonal line $y = x$ with D = $(\frac\eta2,\frac\eta2)$ and has length $\eta/\sqrt 2$. The diagonal line intersects the circle at E $=(\frac\eta2+\frac\Delta{\sqrt2},\frac\eta2+\frac\Delta{\sqrt2})$ and F $=(\frac\eta2-\frac\Delta{\sqrt2},\frac\eta2-\frac\Delta{\sqrt2})$. Both ED and FD have length $\Delta$ each.}
\label{fig:B}
\end{figure}

\begin{lemma}
\label{lem:ineq1}
$g(x) := \frac{\Phi(\sqrt{x^2 + a^2}) - \Phi(x)}{1 - \Phi(x)}$ decreases in $x > 0$ with $\lim_{x \to \infty} g(x) = 1  - e^{-\frac{a^2}2}$.
\end{lemma}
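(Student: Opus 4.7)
My plan is to recast $g$ in terms of the Mills' ratio $M(u) := (1-\Phi(u))/\phi(u)$. Write $g(x) = 1 - h(x)$ with
\[
h(x) := \frac{1-\Phi(y)}{1-\Phi(x)}, \qquad y := \sqrt{x^2+a^2},
\]
and factor out $\phi$'s to get
\[
h(x) = \frac{M(y)}{M(x)} \cdot \frac{\phi(y)}{\phi(x)} = \frac{M(y)}{M(x)} \cdot e^{-a^2/2},
\]
using $\phi(y)/\phi(x) = e^{-(y^2-x^2)/2} = e^{-a^2/2}$. Decrease of $g$ is then equivalent to \emph{increase} of $h$, and the asymptotic statement is equivalent to $M(y)/M(x) \to 1$ as $x \to \infty$.

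The limit is the easy half. The two-sided Mills' bound $u/(1+u^2) < M(u) < 1/u$ (the one already invoked in Lemma~\ref{lem00}) gives $M(u) = 1/u + O(1/u^3)$, hence $M(y)/M(x) \to x/y \to 1$, so $h(x) \to e^{-a^2/2}$ and $g(x) \to 1 - e^{-a^2/2}$.

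For the monotonicity, I would differentiate $h$ and reduce the sign check to a single variable. A short calculation (using $dy/dx = x/y$) gives
\[
h'(x) \;\geq\; 0 \iff \phi(x)(1-\Phi(y)) \;\geq\; (x/y)\phi(y)(1-\Phi(x)) \iff yM(y) \;\geq\; xM(x).
\]
Since $y \geq x$, it suffices to prove that $u \mapsto uM(u)$ is nondecreasing on $(0,\infty)$. From $\phi'(u) = -u\phi(u)$ one obtains the identity $M'(u) = uM(u) - 1$, so
\[
(uM(u))' = M(u) + u M'(u) = (1+u^2) M(u) - u,
\]
which is nonnegative by the Mills' lower bound $M(u) \geq u/(1+u^2)$ noted above. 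This closes the monotonicity argument.

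No step looks genuinely hard once the Mills' reformulation is in place; the only thing to get right is the direction of the inequalities, since $g = 1 - h$ flips monotonicity and the argument consequently needs the \emph{lower} (not the upper) Mills bound. Both the bound and the differential identity are already in the toolkit used for Lemma~\ref{lem00}, so no new auxiliary facts are required.
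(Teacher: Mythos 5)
Your proof is correct, and although it opens with the same reformulation as the paper ($g(x)=1-\Phi(-\sqrt{x^2+a^2})/\Phi(-x)$ plus Mills' ratio inequalities) and computes the same limit the same way, it resolves the monotonicity step by a genuinely different route, and that difference matters. The paper signs $g'$ by claiming that the lower bound displayed in \eqref{eq:tail-ratio}, namely $\frac{1+x^2+a^2}{x\sqrt{x^2+a^2}}e^{-a^2/2}$, dominates $\frac{x}{\sqrt{x^2+a^2}}e^{-a^2/2}$. But that displayed ``lower bound'' is mis-stated: it exceeds the displayed upper bound and exceeds $e^{-a^2/2}$ itself, whereas the ratio $\Phi(-\sqrt{x^2+a^2})/\Phi(-x)=e^{-a^2/2}M(\sqrt{x^2+a^2})/M(x)$ is always \emph{below} $e^{-a^2/2}$ because the Mills ratio $M$ is decreasing. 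The bound one actually gets from pairing the two Mills inequalities is $\frac{x\sqrt{x^2+a^2}}{1+x^2+a^2}e^{-a^2/2}$, and this is too weak to dominate $\frac{x}{\sqrt{x^2+a^2}}e^{-a^2/2}$, so the paper's derivative argument does not close as written. Your reduction of $h'\ge 0$ to $yM(y)\ge xM(x)$ with $y=\sqrt{x^2+a^2}$, proved via the identity $M'(u)=uM(u)-1$ and the lower Mills bound $M(u)\ge u/(1+u^2)$, is exactly the inequality that is needed and supplies a clean, correct proof of the derivative sign; it is the preferable argument here. (For completeness: the paper's concluding sentence for the limit also contains the cosmetic slip $\lim_{x\to\infty}g(x)=e^{-a^2/2}$ in place of $1-e^{-a^2/2}$; your version states the limit correctly.)
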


\begin{proof}
Rewrite $g(x) = 1 - \Phi(-\sqrt{x^2 + a^2})/\Phi(-x)$. Apply the Mill's ratio inequalities, namely,
\[
\frac{x}{1 + x^2} < \frac{\Phi(-x)}{\phi(x)} < \frac 1x, \quad x > 0
\]
to bound
\begin{equation}
\frac{1 + x^2 + a^2}{x\sqrt{x^2 + a^2}} e^{-\frac{a^2}{2}} < \frac{\Phi(-\sqrt{x^2 + a^2})}{\Phi(-x)} < \frac{1+x^2}{x\sqrt{x^2 + a^2}}e^{-\frac{a^2}2}
\label{eq:tail-ratio}
\end{equation}
from which it follows immediately that $\lim_{x \to \infty} g(x) = e^{-a^2/2}$. To see why $g(x)$ is decreasing note that its derivative
\[
g'(x) = -\frac{\phi(x)\{\frac{\Phi(-\sqrt{x^2 + a^2})}{\Phi(-x)} - \frac{xe^{-a^2/2}}{\sqrt{x^2 + a^2}} \}}{\Phi(-x)} < 0
\]
because the lower bound in Equation \eqref{eq:tail-ratio} is larger than $\frac{xe^{-a^2/2}}{\sqrt{x^2 + a^2}}$. 
\end{proof}

Next, we present results on universal upper and lower bounds on $P_{\eta,0}(B_{\eta,\Delta})$ across all $\eta \ge 0$ and all $\Delta > 0$. 
Note that the cumulative distribution function of a chi-squared random variable with 3 degrees of freedom equals 
\begin{equation}
F_{\chi^2_3}(x) = 2\Phi(\sqrt x) - 1 - \frac{\sqrt xe^{-x/2}}{\sqrt{\pi/2}}, \quad x > 0.
\label{eq:chisq}
\end{equation}
This holds because $F_{\chi^2_3}(x)  = \frac1{\sqrt{2\pi}}\int_0^x{\sqrt z e^{-z/2}}dz = \frac2{\sqrt2\pi}\int_0^{\sqrt{x}}y^2 e^{-y^2/2}dy$ by substituting $y = \sqrt z$. Therefore, by integration by parts
\[
F_{\chi^2_3}(x) = \tfrac2{\sqrt{2\pi}}\int_0^{\sqrt{x}}y\tfrac{d}{dy}(-e^{-y^2/2})dy = \tfrac2{\sqrt{2\pi}}y(-e^{-y^2/2})\big|_0^{\sqrt x} + \tfrac2{\sqrt{2\pi}}\int_0^{\sqrt{x}}e^{-y^2/2}dy
\]
which immediately gives the identity in \eqref{eq:chisq}.

\begin{lemma}
\label{lem:biglemma}
The following statements hold for any $\Delta > 0$:
\begin{enumerate}
\item $P_{\eta,0}(B_{\eta,\Delta})$ is continuous and monotonically decreasing in $\eta \ge 0$. 
\item $\lim_{\eta \to 0} P_{\eta,0}(B_{\eta,\Delta}) = P_{0,0}(B_{0,\Delta}) = 1- e^{-\Delta^2/2}$.
\item $\lim_{\eta \to \infty} P_{\eta,0}(B_{\eta,\Delta}) = F_{\chi^2_3}(\Delta^2)$. 
\item $1 - e^{-\Delta^2/2} \ge P_{\eta,0}(B_{\eta,\Delta}) > F_{\chi^2_3}(\Delta^2)$ for every $\eta \ge 0$. 
\end{enumerate}
\end{lemma}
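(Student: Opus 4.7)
The plan is to exploit the integral representation \eqref{eq:prob-b-2}: after the substitution $x = \eta/\sqrt 2$ and $a = \sqrt{\Delta^2 - v^2}$, the integrand becomes exactly $2\phi(v)\, g_a(x)$ with $g_a$ the function analyzed in Lemma~\ref{lem:ineq1}. Each of the four statements then reduces to a corresponding property of $g_a$. Statement~1 is immediate: Lemma~\ref{lem:ineq1} gives that $g_a(x)$ is strictly decreasing and continuous in $x>0$ for every fixed $a>0$, so $\eta \mapsto g_a(\eta/\sqrt 2)$ is strictly decreasing on $[0,\infty)$; integrating against the positive weight $2\phi(v)$ preserves strict monotonicity, and dominated convergence (with envelope $2\phi(v)$) yields continuity in $\eta$.

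For statement~2, I would bypass \eqref{eq:prob-b-2} and compute $P_{0,0}(B_{0,\Delta})$ directly from the geometry of Figure~\ref{fig:B}. The set $B_{0,\Delta}$ is the intersection of the disc of radius $\Delta$ about the origin with the half-plane $\bbU$. Because $Q_{0,0}$ is rotationally symmetric with $Q_{0,0}(\bbU) = 1/2$, and because the squared radius under $Q_{0,0}$ is $\chi^2_2$ with $\Pr(\chi^2_2 \le \Delta^2) = 1 - e^{-\Delta^2/2}$, we obtain $Q_{0,0}(B_{0,\Delta}) = \tfrac12(1 - e^{-\Delta^2/2})$. Dividing by $c(0) = 1/2$ (Lemma~\ref{lem00}) yields $1 - e^{-\Delta^2/2}$, and the limit as $\eta \to 0$ follows from the continuity just established.

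For statement~3, Lemma~\ref{lem:ineq1} gives $g_a(x) \to 1 - e^{-a^2/2}$ as $x \to \infty$, so dominated convergence in \eqref{eq:prob-b-2} yields
\[
\lim_{\eta \to \infty} P_{\eta,0}(B_{\eta,\Delta}) = 2\int_0^\Delta \phi(v)\bigl(1 - e^{-(\Delta^2 - v^2)/2}\bigr)\, dv.
\]
Splitting this into two integrals, the first evaluates to $2\Phi(\Delta) - 1$, while the second, after the key observation that $\phi(v) e^{v^2/2} = 1/\sqrt{2\pi}$ is a constant, collapses to $\Delta e^{-\Delta^2/2}/\sqrt{\pi/2}$. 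Combining and comparing with identity \eqref{eq:chisq} shows the limit equals $F_{\chi^2_3}(\Delta^2)$.

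Statement~4 then follows immediately from statements 1--3: a continuous strictly decreasing function on $[0,\infty)$ attains its supremum $1 - e^{-\Delta^2/2}$ at $\eta = 0$ (giving the non-strict upper bound) and approaches its infimum $F_{\chi^2_3}(\Delta^2)$ only in the limit (giving the strict lower bound). The main obstacle is less conceptual than bookkeeping: recognizing that the integrand in \eqref{eq:prob-b-2} is exactly $g_a$ so that Lemma~\ref{lem:ineq1} applies, and then spotting the constancy of $\phi(v)e^{v^2/2}$ to convert the limiting integral in statement~3 into the closed-form chi-squared expression. Everything else is routine application of monotonicity and dominated convergence.
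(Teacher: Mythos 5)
Your proposal is correct and follows essentially the same route as the paper: both rest on the integral representation \eqref{eq:prob-b-2}, identify the integrand with the function $g$ of Lemma~\ref{lem:ineq1} to get monotonicity, continuity, and the $\eta\to\infty$ limit, evaluate $P_{0,0}(B_{0,\Delta})$ by rotational symmetry, and deduce statement~4 from the first three. The only cosmetic differences are that the paper invokes monotone rather than dominated convergence for the right limit and phrases the $\eta=0$ computation via the rotated variables $U,V$ instead of the chi-squared radius, but these are the same calculations.
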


\begin{proof}
The last statement is an easy consequence of the first three. The integral representation \eqref{eq:prob-b-2}, coupled with the dominated convergence theorem, establishes continuity of $P_{\eta,0}(B_{\eta,\Delta})$ in $\eta \ge 0$. That $P_{\eta,0}(B_{\eta,\Delta})$ is monotonically decreasing in $\eta$ can be readily concluded since the integrand in \eqref{eq:prob-b-2} is monotonically decreasing in $\eta$ due to Lemma \ref{lem:ineq1}. The left limit result is a consequence of continuity and the fact that $P_{0,0}(B_{0,\Delta}) = 2\Pr(U \ge 0, U^2 + V^2 \le \Delta^2) = \Pr(U^2  + V^2 \le \Delta^2) = 1 - e^{-\Delta^2/2}$. For the right limit, apply Lemma \ref{lem:ineq1} and the monotone convergence theorem to argue
\[
\lim_{\eta \to \infty} P_{\eta,0}(B_{\eta,\Delta}) = 2\int_0^\Delta \phi(v)(1 - e^{-\frac{\Delta^2-v^2}2})dv = 2\Phi(\Delta)-1-\tfrac{\Delta e^{-\Delta^2/2}}{\sqrt{\pi/2}} = F_{\chi^2_3}(\Delta^2),
\]
with the last equality following from the identity \eqref{eq:chisq}. 
\end{proof}

Our final lemma gives probability bounds for another type of circular sections that are closely related to $B_{\eta,\Delta}$ but are centered at the vertical axis. For any $r > 0$ define
\begin{equation}
\label{eq:bstar}
B^*_{\theta,r} = \{(x,y)\in \bbU: x^2 + (y-\theta)^2 \le \tfrac{(\theta \wedge 0)^2}{2} +  r^2\},~~\theta \in \bbR.
\end{equation}
Notice that $B^*_{0,r} = B_{0,r}$ and $B^*_{\theta,r} = B_{-\theta,r} + \theta \bs 1$ if $\theta  < 0$. When $\theta \ge \sqrt2 r$, $B^*_{\theta,r}$ is simply the disc of radius $r$ with center $(0,\theta)$. Let $K_{\theta,r}$ denote these discs. For $0 < \theta <  \sqrt 2 r$, we may write $B^*_{\theta,r} = K_{\theta,r} \setminus B'_{\theta,r}$, where $B'_{\theta,r}$ is the reflection of $B_{\theta,r}$ against the diagonal line $y = x$. 
\begin{lemma}
\label{lem:bstar}
$P_{0,\theta}(B^*_{\theta,r}) \ge F_{\chi^2_3}(r^2)$ for every $\theta \in \bbR$.
\end{lemma}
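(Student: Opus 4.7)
The plan is to split into the cases $\theta < 0$ and $\theta \ge 0$ and, in each, reduce to part~4 of Lemma~\ref{lem:biglemma}, which gives $P_{\eta,0}(B_{\eta,r}) > F_{\chi^2_3}(r^2)$ for every $\eta \ge 0$.

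For $\theta < 0$ I would use the diagonal shift observation recorded just above the lemma: the translation $(x,y) \mapsto (x - \theta, y - \theta)$ sends $Q_{0,\theta}$ to $Q_{-\theta,0}$, preserves the half-plane $\bbU$, and, because $(\theta \wedge 0)^2 = \theta^2$, carries $B^*_{\theta,r}$ onto $B_{-\theta,r}$. Hence $P_{0,\theta}(B^*_{\theta,r}) = P_{-\theta,0}(B_{-\theta,r}) > F_{\chi^2_3}(r^2)$ by applying part~4 of Lemma~\ref{lem:biglemma} at $\eta = -\theta \ge 0$.

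For $\theta \ge 0$ we have $B^*_{\theta,r} = K_{\theta,r} \cap \bbU$, the intersection of the disc of radius $r$ centered at $(0,\theta)$ with the upper half-plane. My goal is to prove the stronger bound $P_{0,\theta}(B^*_{\theta,r}) \ge 1 - e^{-r^2/2}$, which exceeds $F_{\chi^2_3}(r^2)$ again by part~4 of Lemma~\ref{lem:biglemma}. To that end, I would introduce polar coordinates $X = R\cos\Phi$, $Y - \theta = R\sin\Phi$ under $Q_{0,\theta}$; here $R^2 \sim \chi^2_2$ and $\Phi \sim \mathrm{Uniform}(0, 2\pi)$ are independent. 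The event $\{X \le Y\}$ rewrites as $\sqrt{2}\,R\cos(\Phi + \pi/4) \le \theta$, so its conditional probability given $R^2 = s$ equals
\[
\psi(s,\theta) = \begin{cases} 1 & \text{if } s \le \theta^2/2, \\ 1 - \pi^{-1}\arccos\!\left(\theta/\sqrt{2s}\right) & \text{if } s > \theta^2/2, \end{cases}
\]
which is manifestly non-increasing in $s$ for each fixed $\theta \ge 0$.

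Since $s \mapsto \psi(s,\theta)$ and $s \mapsto 1_{\{s \le r^2\}}$ are both non-increasing, Chebyshev's sum inequality (positive correlation for comonotone functions of a single real random variable) applied to $S \sim \chi^2_2$ yields
\[
E\{\psi(S,\theta)\, 1_{\{S \le r^2\}}\} \ge E\{\psi(S,\theta)\}\cdot \Pr(S \le r^2).
\]
Dividing through by $E\{\psi(S,\theta)\} = \Pr(X \le Y) = \Phi(\theta/\sqrt 2)$ identifies the left-hand ratio as $P_{0,\theta}(B^*_{\theta,r})$ and the right-hand one as $\Pr(S \le r^2) = 1 - e^{-r^2/2}$, completing the argument. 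The only real work is the correlation step, which encodes the geometric intuition that, for $\theta \ge 0$, conditioning on $\{X \le Y\}$ stochastically shrinks $R$ and therefore can only increase the probability of $\{R^2 \le r^2\}$; the rest is straightforward polar-coordinate bookkeeping, and the case split keeps the $\arccos$ formula for $\psi$ from needing to handle $\theta < 0$.
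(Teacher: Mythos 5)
Your proof is correct, and for $\theta \ge 0$ it takes a genuinely different route from the paper. The $\theta<0$ case is identical: both arguments translate by $(-\theta,-\theta)$ to reduce to $P_{-\theta,0}(B_{-\theta,r})$ and invoke part~4 of Lemma~\ref{lem:biglemma}. For $\theta \ge 0$, the paper splits further into $\theta \ge \sqrt2 r$ (where $B^*_{\theta,r}$ is the full disc $K_{\theta,r}$) and $0<\theta<\sqrt2 r$ (where it writes $B^*_{\theta,r}=K_{\theta,r}\setminus B'_{\theta,r}$, uses the reflection identity $Q_{0,\theta}(B'_{\theta,r})=Q_{\theta,0}(B_{\theta,r})=c(\theta)P_{\theta,0}(B_{\theta,r})$, the upper bound $P_{\theta,0}(B_{\theta,r})\le 1-e^{-r^2/2}$ from Lemma~\ref{lem:biglemma}, and $1-c(\theta)=c(-\theta)$). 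You instead handle all $\theta\ge 0$ at once: in polar coordinates about the mean $(0,\theta)$ of $Q_{0,\theta}$, the conditional probability $\psi(s,\theta)$ of $\{X\le Y\}$ given the squared radius $S=s$ is non-increasing in $s$ when $\theta\ge 0$, so the Chebyshev/Harris correlation inequality for the two comonotone functions $\psi(\cdot,\theta)$ and $1_{\{\cdot\le r^2\}}$ of $S\sim\chi^2_2$ gives $Q_{0,\theta}(B^*_{\theta,r})\ge Q_{0,\theta}(\bbU)\cdot\Pr(S\le r^2)$, i.e.\ $P_{0,\theta}(B^*_{\theta,r})\ge 1-e^{-r^2/2}$, after which $1-e^{-r^2/2}>F_{\chi^2_3}(r^2)$ follows from stochastic dominance of $\chi^2_3$ over $\chi^2_2$ (or from Lemma~\ref{lem:biglemma} part~4, as you note). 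Both routes deliver the same intermediate bound $1-e^{-r^2/2}$; yours eliminates the $\sqrt2 r$ case split and the auxiliary reflected set $B'_{\theta,r}$, at the price of importing the correlation inequality, and it makes transparent the geometric fact that conditioning on $\bbU$ stochastically shrinks the radius when $\theta\ge 0$. The only cosmetic issue is your reuse of $\Phi$ for both the uniform angle and the normal CDF; rename the angle to avoid a clash with the paper's notation.
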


\begin{proof}
The above relations between $B^*$ and $B$ and Lemma 8 immediately give
\begin{itemize}
\item If $\theta = 0$, $P_{0,\theta}(B^*_{\theta,r}) = P_{0,0}(B_{0,r}) = 1 - e^{-r^2/2} > F_{\chi^2_3}(r^2)$. 
\item If $\theta < 0$, $P_{0,\theta}(B^*_{\theta,r}) = P_{0,\theta}(B_{-\theta,r} + \theta\bs 1) = P_{-\theta,0}(B_{-\theta,r}) > F_{\chi^2_3}(r^2)$.
\item If $\theta \ge \sqrt 2 r$, $P_{0,\theta}(B^*_{\theta,r}) = \frac{Q_{0,\theta}(K_{\theta,r})}{c(-\theta)} = \frac{1 - e^{-r^2/2}}{c(-\theta)} > 1 - e^{-r^2/2} > F_{\chi^2_3}(r^2)$ because $c(-\theta) < 1$. 
\item If $0 < \theta < \sqrt 2 r$, 
\begin{align*}
P_{0,\theta}(B^*_{\theta,r}) & = \frac{Q_{0,\theta}(B^*_{\theta,r})}{c(-\theta)} =  \frac{Q_{0,\theta}(K_{\theta,r}) - Q_{0,\theta}(B'_{\theta,r})}{c(-\theta)}\\
& = \frac{Q_{0,\theta}(K_{\theta,r}) - Q_{\theta,0}(B_{\theta,r})}{c(-\theta)} =  \frac{Q_{0,\theta}(K_{\theta,r}) - c(\theta)P_{\theta,0}(B_{\theta,r})}{c(-\theta)}\\
& > \{1 - e^{-r^2/2}\} \frac{1 - c(\theta)}{c(-\theta)} = 1 - e^{-r^2/2} > F_{\chi^2_3}(r^2).
\end{align*}
\end{itemize}

\end{proof}

\relax

\section{Proofs of main results}
\label{app:B}

\begin{proof}[Proof of Theorem \ref{thm:inf2}]
Let $C(X,Y)$ be an equivariant set procedure for $\theta$ with 100$(1-\alpha)$\% selection-specific confidence. Let $A_\theta = \{(x,y) \in \bbU: \theta \in C(x,y)\}$. By equivariance, $A_\theta = A_0 + (\theta,\theta)$, and hence, $1 - \alpha \le P_{\eta,\theta}(\{\theta \in C\}) = P_{\eta,\theta}(A_\theta) = P_{\eta,\theta}(A_0 + (\theta,\theta)) = P_{\eta-\theta,0}(A_0)$. Consequently, $P_{\eta,0}(A_0) \ge 1 - \alpha$ for every $\eta \in \bbR$. On the other hand, 
\[
P_{0,0}(|C|) = \int \int_\bbR 1(\theta\in C)d\theta dP_{0,0} = \int_\bbR P_{0,0}(A_{\theta})d\theta = \int_\bbR P_{\eta,\eta}(A_0)d\eta.
\]
By Lemma \ref{lem:biglemma}, there exist positive constants $\Delta$ and $M$ such that $P_{\eta,0}(B_{\eta,\Delta}) > \frac{1+\alpha}2$ for all $\eta \ge M$ where $B_{\eta,\Delta}$ is as in \eqref{eq:set-b}. Therefore $P_{\eta,0}(A_0 \cap B_{\eta,\Delta}) \ge P_{\eta,0}(A_0) + P_{\eta,0}(B_{\eta,\Delta}) - 1 \ge \frac{1-\alpha}2$ for all $\eta \ge M$. Because $p(x,y|\eta,0)$ is at most $e^{-\eta^2/4}/\{2\pi c(\eta)\}$ on $B_{\eta,\Delta}$, it follows that (with help from Lemma \ref{lem00})
\[
|A_0 \cap B_{\eta,\Delta}| \ge (1-\alpha)\pi c(\eta)e^{\eta^2/4} \ge k \eta^{-1}
\]
for all $\eta \ge M'$ for some positive constants $k$ and $M' \ge M$. Now, $B_{\eta,\Delta} \subset \{(x,y) \in \bbU: (x-\frac\eta2)^2 + (y - \frac\eta2)^2 \le \Delta^2\}$, and hence, 
\[
P_{\frac\eta2,\frac\eta2}(A_0 \cap B_{\eta,\Delta}) \ge |A_0 \cap B_{\eta,\Delta}| \min_{(x,y) \in B_{\eta,\Delta}} p(x,y|\tfrac\eta2,\tfrac\eta2) \ge |A_0 \cap B_{\eta,\Delta}| \frac{e^{-\Delta^2/2}}{\pi} \ge k' \eta^{-1}
\]
for some constant $k'$ for all $\eta \ge M'$. Consequently, $P_{0,0}(|C|) \ge \frac12  \int_\bbR P_{\frac\eta2,\frac\eta2}(A_0 \cap B_{\eta,\Delta}) d\eta \ge \int_{M'}^\infty k'\eta^{-1}d\eta = \infty$. Next, apply equivariance to see $P_{\eta,\eta}(|C|) = P_{0,0}(|C + (\eta,\eta)|) = P_{0,0}(|C|) = \infty$ for every $\eta \in \bbR$. 
\end{proof}

\begin{proof}[Proof of Theorem \ref{thm:finite}]
For $u \in (0,1)$, let $y_{\eta,\theta}(u)$ denote the $u$-th quantile of the marginal distribution of $Y$ under $(X,Y) \sim P_{\eta,\theta}$. For a given $\alpha \in (0,1)$ and a fixed $\eta \in \bbR$, the interval $I_{\eta,\theta_0} = [y_{\eta,\theta_0}(\alpha/2),y_{\eta,\theta_0}(1 - \alpha/2)]$ gives the acceptance region of a size-$\alpha$ test based on $Y$ alone for $H:\theta = \theta_0$ vs $K:\theta \ne \theta_0$. The oracle procedure $D$ in Theorem \ref{thm:finite} can be written as $D(Y,\eta) = \{\theta: Y \in I_{\eta,\theta}\}$ which simply inverts these tests to obtain a set procedure for $\theta$ -- based on the knowledge of $\eta$ -- with constant $100(1-\alpha)\%$ coverage. Without loss of generality we assume $\eta = 0$ and establish that 
\[
P_{0,\theta}(|D_0|) = \int_{\bbR} P_{0,\theta}(\bbR \times I_{0,\theta'})d\theta' < \infty
\]
for every $\theta \in \bbR$, where $D_0(Y) = D(Y,0)$. 
 
Consider the sets $B^*_{\theta,r}$ as in \eqref{eq:bstar} with $r > 0$ chosen large enough so that $F_{\chi^2_3}(r^2) \ge 1 - \alpha/2$. Lemma \ref{lem:bstar} says $P_{0,\theta}(B^*_{\theta,r}) \ge 1 - \alpha/2$ for every $\theta$. Consequently, it must be that $y_{0,\theta}(\tfrac\alpha2) \ge \inf\{y: (x,y) \in B^*_{\theta,r}\}$ and $y_{0,\theta}(1 - \tfrac\alpha2) \le \sup\{y: (x,y) \in B^*_{\theta,r}\}$. In other words, $I_{0,\theta} \subset [l(\theta),u(\theta)]$ where
\begin{align*}
l(\theta) & := \inf\{y: (x,y) \in B^*_{\theta,r}\} = 
\left\{\begin{array}{ll} 
\theta - r, &~\theta \ge r\\[2pt]
\frac\theta2 - \{\frac{r^2}{2} - \frac{\theta^2}{4}\}^{1/2}, &~0 \le \theta < r\\[5pt]
\frac\theta2 - \frac{r}{\sqrt 2}, &~ \theta < 0. 
\end{array}\right.\\
\mbox{and},~u(\theta) & := \sup\{y: (x,y) \in B^*_{\theta,r}\} = 
\left\{\begin{array}{ll} 
\theta + r, &~\theta \ge 0\\[2pt]
\theta + \{r^2 + \frac{\theta^2}{2}\}^{1/2}, &~-\sqrt 2 r \le \theta < 0\\[5pt]
\frac\theta2 + \frac{r}{\sqrt 2}, &~ \theta < -\sqrt 2 r. 
\end{array}\right.
\end{align*}
Therefore,
\begin{equation}
\label{eq:upbound}
P_{0,\theta}(\bbR \times I_{0,\theta'}) \le P_{0,\theta}(\bbR \times [l(\theta'),u(\theta')]) \le 1 - P_{0,\theta}(B^*_{\theta,\delta}) \le 1 - F_{\chi^2_3}(\delta^2)
\end{equation}
where $\delta = \delta(\theta';\theta)$ is the largest possible value such that $B^*_{\theta,r'}$ does not overlap with $\bbR \times [l(\theta'),u(\theta')]$. Since the interval $[l(\theta'),u(\theta')]$ always contains either $\theta$ or $\theta/2$ and has width no larger than $3r/\sqrt{2}$, there must exist positive numbers $K_1$ and $K_2$ such that $\delta(\theta';\theta) \ge K_1|\theta'|$ for all $|\theta'| > K_2$. Therefore,
\[
P_{0,\theta}(|D_0|) \le 2K_2+ \int_{|\theta'| > K_2} \{1 - F_{\chi^2_3}(K_1^2\theta'^2)\} d\theta' < \infty
\]
since $1 - F_{\chi^2_3}(K_1^2\theta'^2) \le K_1^3|\theta'|^3e^{-K_1^2\theta'^2/2}$ is integrable. 
\end{proof}

\section{Empirical Bayesian procedures}
\label{app:C}

%Here we detail the case where $\tau_1 = \cdots = \tau_p = \tau$. The heterogeneous variance case can be handled analogously. 
\subsection{Gaussian EB}
For the Gaussian empirical Bayesian method we assume a hierarchical model on $(\X,\bs\eta)$:
\[
X_j \sim N(\eta_j,\tau_j^2),~~\eta_j \sim N(m,v),~\text{independently across}~j = 1,\ldots,p.
\]
Estimation is carried out by adjusting for the selection event $\{\X \prec y\}$, where $y$ is the observed value of $Y$. Conditional on this adjustment,
% to the joint distribution of $(\X,\bs\eta)$. This density of this conditional joint distribution can be written as
%\begin{align*}
%p(\x,\bs\eta & |m,v,\X \prec y) \propto \prod_{j = 1}^p\cbr{ \frac1{\tau_j} \phi\pbr{\frac{x_j - \eta_j}{\tau_j}} \frac1{\sqrt v}\phi\pbr{\frac{\eta_j - m}{\sqrt{v}}} I(x_j < y)}.
%%& = \prod_{j = 1}^p \cbr{\frac1{\tau_j\sqrt{\rho_j}}\phi\pbr{\frac{\eta_j - (\rho_j x_i + (1-\rho_j)m)}{\tau_j\sqrt{\rho_j}}}}
%\end{align*}
$X_1,\ldots,X_p$ are marginally independent with $X_j \sim N(m, v + \tau_j^2)$ restricted to the interval $(-\infty, y)$. The posterior distribution of $\bs\eta$ given $\X = \x$ is unaffected by the adjustment $\{\X \prec y\}$, with $\eta_1,\ldots,\eta_p$ being independent and $\eta_j \sim N(\rho_j x_j + (1 - \rho_j)m, \rho_j\tau^2)$ where $\rho_j = \frac{v}{v + \tau_j^2}$. Integrating out $\bs\eta$, we see that $\X$ provides information about $m$ and $v$ through the marginal likelihood
\[
L(m,v) = \prod_{j = 1}^p \frac{f(x_j|m,\{v + \tau_j^2\}^{1/2})}{F(y | m,\{v + \tau_j^2\}^{1/2})}
\]
which could be maximized to obtain the so called type II maximum likelihood estimate of $(m,v)$. However, a straight optimization of $L(m,v)$ may produce an estimate $\hat v = 0$ and mild regularization of the marginal likelihood often results in improved estimation \citep{gelman2006prior}. In our case we take 
\begin{equation}
(\hat m, \hat v) = \arg\max_{m,v} L(m,v)\pi(v), \label{ml2}
\end{equation}
where $\pi(v) \propto (1 + v)^{-1}v^{-1/2}$ results from a half-Cauchy prior on $\sqrt v$. Given these estimates of $(m,v)$ the estimate $\hat{\bs\eta}$ is defined by the plug-in posterior means 
\[
\hat\eta_j = \hat \rho_j x_j + (1 - \hat \rho_j)\hat m
\] 
with $\hat \rho_j = \frac{\hat v}{\hat v + \tau_j^2}$. The optimization in \eqref{ml2} can be carried out numerically using standard Newton type methods. In our experiments we used the Broyden-Fletcher-Goldfarb-Shanno algorithm as implemented by the \textsf{optim} function in the \textsf{R} programming language.

% prior that $\eta_1,\ldots,\eta_p$ are independent draws from some $N(m,v)$ distribution. As noted earlier, an attractive plug-in estimate $\hat{\bs\eta} = (\hat\eta_1,\ldots,\hat\eta_p)$ is given by the posterior means 
%\[
%\hat\eta_j = \frac{v}{\tau_j^2 + v}  X_j + \frac{\tau_j^2}{\tau_j^2 + v} m.
%\] 
%In the empirical Bayes implementation, we further plug in estimates $(\hat m, \hat v)$ for $(m,v)$ into the above formula. 

\subsection{Nonparametric EB}
The nonparametric empirical Bayesian method assumes a hierarchical model on $(\X,\bs\eta)$ analogous to the one in the Gaussian case, with the difference that $\eta_1,\ldots,\eta_p$ are now taken to be independent draws from a density $g(\eta)$ which may not be Gaussian:
\[
X_j \sim N(\eta_j,\tau_j^2),~~\eta_j \sim g,~\text{independently across}~j = 1,\ldots,p.
\]
If $g$ were known, we could estimate each $\eta_j$ by the corresponding posterior mean
\[
\hat\eta_j = \frac{\int \eta f(x_j|\eta,\tau_j)g(\eta)}{\int f(x_j|\eta,\tau_j)g(\eta)d\eta}.
\]
With $g$ unknown, we will take $\hat\eta_j$ as above with a nonparametric estimate 
$\hat g$ plugged in for $g$. Below we discuss an estimation strategy which adjusts for the selection event $\{\X \prec y\}$ where $y$ is the observed value of $Y$. 

For this nonparametric method, we primarily focus on the homogeneous variance case where $\tau_1 = \cdots = \tau_p = \tau$. Adjusting for $\{\X \prec y\}$ we could rewrite the hierarchical model as
\[
X_j \sim \kappa(\cdot|\eta_j),~\eta_j \sim g^*(\eta)
\]
where $\kappa(x|\eta) = f(x|\eta,\tau)I(x < y)/F(y|\eta,\tau)$ is the density of $N(\eta,\tau^2)$ restricted to $(-\infty, y)$ and $g^*(\eta) \propto F(y|\eta,\tau)g(\eta)$. Notice that $\hat\eta_j$ can be rewritten as $\hat\eta_j = \int \eta \kappa(x_j|\eta)g^*(\eta)d\eta/\int \kappa(x_j|\eta)g^*(\eta)d\eta$.

We obtain a nonparametric estimate of the adjusted prior density $g^*(\eta)$ by using
the predictive recursion algorithm by M Newton \citep{newton2002nonparametric} as follows:
start with an initial estimate $g^*_0$, recursively update it by the equations 
\begin{equation}
g^*_j(\eta) = (1 - w_j) g^*_{j-1}(\eta) + w_j \frac{\kappa(x_j | \eta)g^*_{j-1}(\eta)}{\int \kappa(x_j | t)g^*_{j-1}(t)dt}, \quad j = 1,\ldots,p,
\label{recursion}
\end{equation} 
and return the estimate $\hat g^* = g^*_n$. Here
$w_1,w_2,\ldots \in (0,1)$ is a prespecified weight sequence. One typically
chooses the weights so that, {\it asymptotically},
$\sum_{j=1}^\infty w_j = \infty, \sum_{j = 1}^\infty w_j^2 < \infty$, to guarantee consistency
of $\hat g$ \citep{tokdar2009consistency}. Our numerical work uses $w_j = (1 + j)^{-2/3}$. We also repeat the recursion on 50 random permutations of the data and take the average of these 50
estimates as our final estimate of $g^*$. For every use of the recursion formula
\eqref{recursion}, the integral is carried out numerically using a Gaussian quadrature.

\end{document}